\documentclass[a4paper, 11pt]{article}
\usepackage{authblk}
\usepackage{latexsym,amsmath,amsfonts,amssymb,amsbsy,amsopn,amstext,amsthm}
\usepackage{epsfig,epstopdf,graphicx}
\usepackage[mathscr]{eucal}
\usepackage{color,multicol,makeidx}
\usepackage{bm,mathrsfs}
\usepackage{float}
\usepackage{mathrsfs}
\usepackage{tabu}
\usepackage{enumerate}

\def\dref#1{(\ref{#1})}
\def\eq{\displaystyle\stackrel\triangle=}
\def\xra{\xrightarrow}

\newcommand{\Var}{\ensuremath{\mathrm{Var}}}

\newcommand{\diag}{\ensuremath{\mathrm{diag}}}
\DeclareMathOperator*{\argmin}{arg\,min}

\newtheorem{assum}{Assumption}
\newtheorem{thm}{Theorem}
\newtheorem{prop}{Proposition}
\newtheorem{lem}{Lemma}
\newtheorem{rem}{Remark}

\newtheorem{defn}{Definition}

\providecommand{\keywords}[1]
{
	\small	
	\textbf{\textit{Keywords---}} #1
}

	\title{On the Asymptotic Optimality of Cross-Validation based Hyper-parameter Estimators for Regularized Least Squares Regression Problems}

\vspace{5mm}

\author[1]{Biqiang Mu\\
	Key Laboratory of Systems and Control \\
	Institute of Systems Science\\
	Academy of Mathematics and Systems Science \\
	Chinese Academy of Sciences\\
	Beijing 100190, China \\
	(bqmu@amss.ac.cn)\\
	
	\vspace{5mm}
	Tianshi Chen\\
	School of Data Science and
	Shenzhen Research Institute of Big Data\\
	The Chinese University of Hong Kong\\
	Shenzhen 518172,
	China\\
	(tschen@cuhk.edu.cn)\\
	
	\vspace{5mm}
	Lennart Ljung\\
	Division of Automatic Control\\
	Department of Electrical Engineering\\
	Link\"oping University\\
	Link\"oping SE-58183,
	Sweden\\
	(ljung@isy.liu.se)
}
\begin{document}
	
	\maketitle

	\newpage
	\begin{abstract}
		The asymptotic optimality (a.o.) of various hyper-parameter estimators with different optimality criteria has been studied in the literature   
		for regularized least squares regression problems. The estimators include e.g., the maximum (marginal) likelihood method, $C_p$ statistics, and generalized cross validation method, and the optimality criteria are based on e.g., the inefficiency, the expectation inefficiency and the risk. In this paper, we consider the 
		regularized least squares regression problems with fixed number of regression parameters, choose the optimality criterion based on the risk, and study the a.o. of several cross validation (CV) based hyper-parameter estimators including 
		the leave $k$-out CV method, generalized CV method, $r$-fold CV method and hold out CV method. We find the former three methods can be a.o. under mild assumptions, but not the last one, and we use Monte Carlo simulations to illustrate the efficacy of our findings.
	\end{abstract}

\keywords{
	Asymptotic optimality, regularized least squares regressions, hyper-parameter estimators, cross-validation methods.
}

\section{Introduction}

Consider the linear regression model
\begin{align}\label{lrm0}
y_i  = x_i^T \beta + \varepsilon_i, \quad i=1,\dots, n,
\end{align}
where $n\in\mathbb N$ is the number of data, $y_i\in\mathbb R,x_i\in{\mathbb R}^p,\varepsilon_i\in\mathbb R$, are the $i$th observation, known covariate and noise, respectively, $\beta\in\mathbb R^p$ is a $p$-dimensional vector of unknown regression parameters, and the covariates $x_i$, $i=1,\dots, n$, are assumed to be deterministic and the noises $\varepsilon_i$, $i=1,\dots, n$, are i.i.d. with mean 0 and variance $0<\sigma^2<\infty$. 

For convenience, \eqref{lrm0} can be written in a matrix-vector format as follows
\begin{align}
y = X  \beta + \varepsilon, \label{lrm}
\end{align}
where $y=[y_1,\cdots,y_n]^T, X=[x_1,\cdots,x_n]^T$, and $\varepsilon=[\varepsilon_1,\cdots,\varepsilon_n]^T$. Provided that $X$ has full column rank, the least squares (LS) estimator of $\beta$ is
\begin{align}
\widehat{\beta}_{\rm LS}=\argmin_{\beta \in \mathbb{R}^p}\|y-X\beta\|^2
= (X^TX)^{-1}X^T y, \label{ls}
\end{align} where $\|\cdot\|$ is the Euclidean norm. It is well-known that 
the LS estimator $\widehat{\beta}_{\rm LS}$ is unbiased and has covariance matrix  $\sigma^2 (X^TX)^{-1}$, and moreover, $\widehat{\beta}_{\rm LS}$ has a large variance if $X^TX$ is ill-conditioned, e.g., \cite{Rao2018}. 
One way to mitigate the large variance is to add a quadratic regularization term in the LS criterion \eqref{ls}, leading to the regularized least squares (RLS) estimator
\begin{subequations}
	\label{rls0}
	\begin{align}
	\widehat{\beta}(\eta)
	&=\argmin_{\beta \in \mathbb{R}^p}\|y-X\beta\|^2
	+\gamma\beta ^T K(\eta)^{-1}\beta\\
	\label{rls20}
	&=(X^T   X   +  \gamma K(\eta)^{-1})	^{-1}X^Ty,
	\end{align}
\end{subequations}
where $\gamma>0$ is the regularization parameter, $K(\eta)\in\mathbb R^{p\times p}$ is a  suitably designed positive definite matrix
and $\eta\in\Omega$ is the parameter used to parametrize $K(\eta)$ with $\Omega\subset \mathbb R^m$ being the set in which $\eta$ takes values.
The simplest case of
\eqref{rls0} is to let $\gamma=1$ and $K(\eta)= I_p/\eta$  with $\eta\geq0$, i.e.,
\begin{align}
\widehat{\beta}_{\rm R}(\eta)= (X^TX + \eta I_p)^{-1}X^T y, \label{rr}
\end{align}
which is the well-known
ridge regression estimator that can have smaller mean squared error (MSE) than the LS estimator \eqref{ls}  if $\eta$ is chosen properly, e.g., \cite{Hoerl1970}.

To simply the notations in the following discussions,  we let $\gamma=\sigma^2$ in \eqref{rls0} and consider the following RLS estimator in the sequel
\begin{subequations}
	\label{rls}
	\begin{align}
	\widehat{\beta}(\eta)
	&=\argmin_{\beta \in \mathbb{R}^p}\|y-X\beta\|^2
	+\sigma^2\beta ^T K(\eta)^{-1}\beta\\
	\label{rls2}
	&=(X^T   X   +  \sigma^2 K(\eta)^{-1})	^{-1}X^Ty.
	\end{align}
\end{subequations}
\begin{rem}
	The matrix $\gamma K(\eta)^{-1}$ in \eqref{rls0} is called the regularization matrix, and moreover, for any given $\gamma$, there exists a $\bar\eta$ such that $\gamma K(\eta)^{-1}=K(\bar\eta)^{-1}$ by merging $\gamma$ and the scale factor of $K(\eta)$, which implies that the value of $\gamma$
	does not matter for the tuning of the RLS estimator \eqref{rls0}. However, letting $\gamma=\sigma^2$ can lighten the notations in the asymptotic analysis,
	where $\sigma^2$ is assumed to be known in the following discussions. 
	
\end{rem}
The parameter $\eta$ is called the hyper-parameter and its estimation can be handed by many methods such as the maximum (marginal) likelihood (ML) method \cite{Maritz1989}, $C_p$ statistics \cite{Mallows1973}, Stein's unbiased risk
estimator (SURE) \cite{Stein1981}, and cross validation (CV) methods \cite{Hastie2009}, leading to many estimators of $\eta$. The asymptotic optimality (a.o.) of these estimators with different optimality criteria  has been studied in the literature, e.g., \cite{Craven1979,Golub1979,Wahba1985,Speckman1985,Li1986,Girard1991}.

\subsection{Optimality Criteria}\label{sec:oc}
In what follows, we briefly review some optimality criteria including the inefficiency, the expectation inefficiency and the risk.
These optimality criteria are all based on the \emph{true loss} (also called the predictive mean squared error), e.g. \cite{Girard1991,Li1986,Wahba1985},  for the estimation of $X\beta$ by $X\widehat{\beta}(\eta)$, which is defined as follows
\begin{align}
L_n(\eta) = \frac{1}{n} \|X\beta - X \widehat{\beta}(\eta)\|^2\label{tl}.
\end{align}

\subsubsection{The inefficiency}
For the ridge regression estimator \dref{rr}, the inefficiency of an estimator $\widehat{\eta}_n$ 
of $\eta$   is  defined  as
$L_n(\widehat{\eta}_n)/\inf\limits_{\eta\geq 0} L_n(\eta)$
and the estimator $\widehat{\eta}_n$ is said to be asymptotically optimal (a.o.) in \cite{Li1986,Girard1991} if
\begin{align}
\frac{L_n(\widehat{\eta}_n)}{\inf\limits_{\eta\geq 0} L_n(\eta)}\xra{}1 \label{aor}
\end{align}
in probability as $n\xra{}\infty$. It was shown in \cite{Li1986,Girard1991} that for the ridge regression estimator \dref{rr}, when $p$ is a function of $n$ in \eqref{lrm0} and $p\rightarrow\infty$ as $n\rightarrow\infty$,
the $C_p$ and GCV based hyper-parameter estimators of $\eta$ and their fast randomized versions are all a.o. under some assumptions.

\subsubsection{The expectation inefficiency}
For the ridge regression estimator \dref{rr}, the expectation inefficiency of  an estimator $\widehat{\eta}_n$ 
of $\eta$ is  defined as
\begin{align}
\mathcal{I}_n=\frac{E (L_n(\widehat{\eta}_n))}{\inf\limits_{\eta\geq 0} E(L_n(\eta))}
\label{ei}
\end{align} where $E(\cdot)$ denotes the mathematical expectation with respective to the noise distribution, and the estimator $\widehat{\eta}_n$ is said to be a.o. in \cite{Golub1979} if $\mathcal{I}_n\xra{}1$ as $n\xra{}\infty$.
It was shown in \cite{Golub1979} that for the ridge regression \dref{rr} with  either fixed $p$ or $p>n$, the ``expectation'' GCV estimator is a.o..
The definitions of the expectation inefficiency and its corresponding a.o. are also adopted for the tuning of regularization parameter in spline smoothing \cite{Craven1979,Speckman1985,Wahba1985}.
It was first shown in \cite{Craven1979} that the ``expectation'' GCV based hyper-parameter estimator of $\eta$ is a.o. and then shown in \cite{Speckman1985} that the GCV hyper-parameter estimator of $\eta$ itself is a.o..  Besides, the generalized ML and GCV methods are compared in \cite{Wahba1985} in terms of the expectation efficiency under different assumptions on the smoothness of the underlying function to be estimated.

\subsubsection{The risk}\label{sec:risk}

For the RLS estimator $\widehat{\beta}(\eta)$ in \eqref{rls}, the risk is defined in e.g., \cite{Craven1979,Golub1979,Wahba1985} as
$R_n(\eta) = E (L_n(\eta)),$
and the corresponding optimal hyper-parameter estimator $\eta^*_n$ is defined as
\begin{align}\label{ophy_risk}
\eta^*_n=\argmin_{\eta\in \Omega} R_n(\eta)
\end{align}
where ``$\arg\min$'' denotes the set of global minima of $R_n(\eta)$, i.e., 
\begin{align}\label{eq:D}
\arg\min_{\eta \in \Omega} R_n(\eta) = \big\{\eta|\eta \in
\Omega, R_n(\eta)=\min_{\eta'\in \Omega }R_n(\eta')\big\}.
\end{align}
Then an estimator $\widehat{\eta}_n$ of $\eta$ is said to be a.o. in \cite{Mu2018a,Mu2018c1} if $\eta^*_n$ has a limit and $\widehat{\eta}_n$ converges in probability to the limit as $n\xra{}\infty$. It was shown in \cite{Mu2018a,Mu2018c1} that for fixed $p$, the SURE estimator is a.o. under some assumptions, but the ML estimator is in general not a.o.. The optimal hyperparameter estimator $\eta^*_n$ 
was also introduced in \cite{Speckman1985} for spline smoothing in nonparametric regression models to study the a.o. in the sense of the expectation inefficiency of the GCV estimator.

\subsection{Problem Statement}
In this paper, we consider the linear regression model \eqref{lrm0} or equivalently \eqref{lrm}  with fixed $p$ and our goal is to study the asymptotic optimality (a.o.) of various CV based hyper-parameter estimators of $\eta$, where the regression parameter vector $\beta$ is estimated using the RLS estimator  $\widehat{\beta}(\eta)$. The class of CV methods is perhaps the simplest and most widely used class of methods for parameter estimation \cite{Hastie2009}.  
The CV methods that will be considered here and reviewed shortly include
the leave $k$-out CV (LKOCV) method, generalized CV (GCV) method, $r$-fold CV (RFCV) method and hold out CV (HOCV) method. 

As can be seen from the brief review in Section \ref{sec:oc}, the case where $p$ is a function of $n$ and $p\rightarrow\infty$ as $n\rightarrow\infty$, has been well studied for the a.o. of the GCV based hyper-parameter estimator, e.g., \cite{Li1986,Girard1991,Golub1979}, but not the case with a fixed $p$. 

It is worth to stress that the case with a fixed $p$ is fundamental in linear regression problems, e.g., \cite{DW1980,DS1981,Rao2018}, and has wide applications in various disciplines of science and engineering, e.g., in the area of systems and control and machine learning, where $p$ corresponds to the number of basis functions, e.g., \cite{Hofmann2008,Ljung1999}. Moreover, when $X^TX$ is severely ill-conditioned, the quadratic regularization term $\sigma^2\beta ^T K(\eta)^{-1}\beta$ in \eqref{rls} plays a key role in ensuring a high quality RLS estimator $\widehat{\beta}(\eta)$, which depends on both the design of a suitable regularization matrix $\sigma^2K(\eta)^{-1}$ and the corresponding hyper-parameter estimator $\hat\eta_n$, e.g., \cite{PDCDL14} for illustrations in the area of systems and control. 
Finally, it should be noted that the analysis methods in \cite{Li1986} and its subsequent papers, e.g., \cite{Dobriban2018,Girard1991,Shao1997,Xu2012} consider the case, where $p$ is a function of $n$ in \eqref{lrm0} and $p\rightarrow\infty$ as $n\rightarrow\infty$,
and moreover, require the assumption $\inf_{\eta}nR_n(\eta)\xra{}\infty$ as $n\xra{}\infty$, that is essential for deriving the a.o. of the $C_p$ and GCV estimators. However, for fixed $p$, $nR_n(\eta)$ is bounded from above by a constant and thus the analysis methods in \cite{Li1986} and its subsequent papers can not be applied here to deal with the case with a fixed $p$.

The rest of the paper is organized as follows.
We consider the selection of optimality criteria in Section \ref{sec:optimality}. Then we briefly review the CV methods and analyze their a.o. in Sections
\ref{sec:cv} and \ref{sec:main}, respectively. 
Finally, we conclude the paper in Section \ref{sec7}.
All proofs of the theoretical results and numerical simulation  are
postponed to the Appendices.

\section{Selection and Definition of Optimality Criterion}\label{sec:optimality}

To study the a.o. of the aforementioned CV methods, we need to first choose an optimality criterion. It turns out that for the linear regression model \eqref{lrm} with a fixed $p$, 
if the inefficiency and the expected inefficiency are chosen, some trivial estimators can be shown to be a.o. accordingly, as summarized in the following proposition.
\begin{prop}
	\label{thm21}
	Consider the ridge regression estimator \dref{rr}.
	Suppose that
	\begin{enumerate}[1)]
		\item The dimension $p$ of the parameter $\beta$ is fixed and $X^TX/n=O(1)$,
		\item An estimator $\widehat{\eta}_n$ of $\eta$ is bounded in probability.
	\end{enumerate}
	Then the estimator $\widehat{\eta}_n$ is such that 
	$L_n(\widehat{\eta}_n)/\inf\limits_{\eta\geq 0} L_n(\eta)\xra{}1$
	in probability as $n\xra{}\infty$ and $\mathcal{I}_n=E L_n(\widehat{\eta}_n)/\inf\limits_{\eta\geq 0} EL_n(\eta) \xra{}1$ as $n\xra{}\infty$.
\end{prop}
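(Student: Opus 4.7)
The strategy is that with $p$ fixed and $X^TX/n=O(1)$, ridge shrinkage by a bounded hyper-parameter is a vanishing perturbation of ordinary least squares. I would show that both $L_n(\widehat\eta_n)$ and $\inf_{\eta\ge 0}L_n(\eta)$ are asymptotically equivalent to the least-squares loss $L_n(0)=\varepsilon^TP_X\varepsilon/n$ (with $P_X=X(X^TX)^{-1}X^T$), and then deduce convergence of the ratio.

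The key step is a uniform expansion of $L_n(\eta)$ on compact $\eta$-sets. Writing $H(\eta)=X(X^TX+\eta I_p)^{-1}X^T$, the resolvent identity yields $H(\eta)-P_X=-\eta X(X^TX+\eta I_p)^{-1}(X^TX)^{-1}X^T$, and the identity $X\widehat{\beta}(\eta)-X\beta=-\eta X(X^TX+\eta I_p)^{-1}\beta+H(\eta)\varepsilon$ gives the algebraic decomposition
\[
nL_n(\eta)=nL_n(0)+2\langle\Delta_n(\eta),P_X\varepsilon\rangle+\|\Delta_n(\eta)\|^2,
\]
where $\Delta_n(\eta):=X\widehat{\beta}(\eta)-X\widehat{\beta}(0)$. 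The hypothesis $X^TX/n=O(1)$ together with the (implicit) full column rank of $X$ force $\lambda_i(X^TX)\asymp n$, so $\|H(\eta)-P_X\|_{\mathrm{op}}=O(\eta/n)$ and the deterministic bias term has norm $O(\eta/\sqrt n)$. This gives $\sup_{\eta\in[0,M]}\|\Delta_n(\eta)\|=O_p(M/\sqrt n)$, hence $\sup_{\eta\in[0,M]}|nL_n(\eta)-\varepsilon^TP_X\varepsilon|=o_p(1)$, while $\varepsilon^TP_X\varepsilon$ has mean $\sigma^2 p$ and bounded variance, and so is bounded above and below in probability.

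To pass from $[0,M]$ to the whole half-line $\eta\ge 0$, I would observe that $L_n(\eta)\to n^{-1}\|X\beta\|^2$ as $\eta\to\infty$ and that this limit is of constant order, whereas $L_n(0)=O_p(1/n)$; a tail comparison produces $M^{*}$ with $\inf_{\eta>M^{*}}L_n(\eta)>L_n(0)$ on an event of probability tending to $1$. Combining this with the uniform expansion and with $P(\widehat\eta_n\le M^{*})\to 1$, both $nL_n(\widehat\eta_n)$ and $n\inf_{\eta\ge 0}L_n(\eta)$ equal $\varepsilon^TP_X\varepsilon+o_p(1)$, and the ratio therefore converges to $1$ in probability.

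For the expectation inefficiency, taking expectations in the uniform expansion yields $nR_n(\eta)\to\sigma^2 p$ uniformly on $[0,M]$, and the same tail comparison forces $n\inf_{\eta\ge 0}R_n(\eta)\to\sigma^2 p$. The matching statement $nEL_n(\widehat\eta_n)\to\sigma^2 p$ I would derive by a truncation built on the deterministic envelope $L_n(\eta)\le 2(\|X\beta\|^2+\|\varepsilon\|^2)/n$, which has bounded expectation, combined with $P(\widehat\eta_n>M)\to 0$ as $M\to\infty$. The main obstacle I anticipate is exactly this last piece: upgrading convergence in probability of $nL_n(\widehat\eta_n)$ to convergence of its expectation is not automatic from \emph{bounded in probability} alone and requires a careful two-regime argument (bounded versus large $\eta$) in which moment conditions on $\varepsilon$ enter implicitly. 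The uniform expansion on compact $\eta$-windows, by contrast, is routine linear algebra once the resolvent identity is written down.
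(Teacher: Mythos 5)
Your overall strategy is the same as the paper's: reduce both $nL_n(\widehat\eta_n)$ and $n\inf_{\eta\geq 0}L_n(\eta)$ to the ordinary least squares quantity $\varepsilon^TP_X\varepsilon$ with $P_X=X(X^TX)^{-1}X^T$, which is bounded above and below in probability, and argue analogously for the risk. Your expansion via $\Delta_n(\eta)$ is a cleaner packaging of the paper's three-term decomposition $L_n=L_n'+L_n''+L_n'''$, and on a fixed compact window $[0,M]$ it is correct. On the expectation half you are in fact more careful than the paper, which silently replaces $E L_n(\widehat\eta_n)$ by $R_n(\eta)$ evaluated at $\eta=\widehat\eta_n$; your truncation via the envelope $L_n(\eta)\leq 2(\|X\beta\|^2+\|\varepsilon\|^2)/n$ is the right way to close that.

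The genuine gap is the step you label a routine ``tail comparison,'' and it is exactly the point the paper's own proof also glosses over. For a \emph{fixed} $M^{*}$ it is not true that $\inf_{\eta>M^{*}}L_n(\eta)>L_n(0)$ with probability tending to one: the minimizer of $L_n$ over $\eta\geq 0$ lives at scale $\sqrt n$, outside every fixed compact window. Indeed, writing $G=X^TX+\eta I_p$ and setting $\eta=u\sqrt n$ with $u>0$ fixed (and $X^TX/n\xra{}\Sigma>0$ for concreteness), one finds
\begin{align*}
nL_n(u\sqrt n)=\varepsilon^TP_X\varepsilon+u^2\beta^T\Sigma^{-1}\beta-2u\,\beta^T\Sigma^{-1}\big(X^T\varepsilon/\sqrt n\big)+o_p(1),
\end{align*}
so the stochastic cross term is of the \emph{same} order as $\varepsilon^TP_X\varepsilon$ itself and is negative with probability tending to $1/2$; optimizing over $u$ gives $n\inf_{\eta\geq0}L_n(\eta)\leq\varepsilon^TP_X\varepsilon-\max\big(0,\beta^T\Sigma^{-1}X^T\varepsilon/\sqrt n\big)^2/\big(\beta^T\Sigma^{-1}\beta\big)+o_p(1)$, a deficit that does not vanish relative to the numerator. (Concretely, for $p=1$ and $x_i\equiv 1$ the choice $\eta=n\bar\varepsilon/\beta$ drives $L_n$ to exactly $0$ whenever $\bar\varepsilon>0$.) The paper buries the same difficulty in its Case~3 by asserting $\eta_n^\natural=O_p(1)$ without justification, so your argument reproduces the paper's proof including its weakest link rather than repairing it. For the expectation-inefficiency half the problem disappears, because the cross term is centered and $nR_n(u\sqrt n)\geq\sigma^2p+u^2\beta^T\Sigma^{-1}\beta+o(1)$, so there your tail comparison does go through; it is only the loss-ratio claim whose tail regime $\eta\asymp\sqrt n$ needs a genuinely different treatment than the one you (and the paper) supply.
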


\begin{rem}
	Propositions \ref{thm21} still holds if the assumption $X^TX/n=O(1)$ is relaxed to $X^TX/n^\delta=O(1)$ for some $\delta>0$. The reason why Proposition \ref{thm21} holds is that both the true  loss and the risk can be decomposed as summations of a large constant term independent of $\eta$ and some smaller terms dependent on $\eta$.
\end{rem}

Therefore, we choose to use the optimality criterion based on the risk in Section \ref{sec:risk} to study the a.o. of the CV methods. To this goal, we need to first guarantee the existence of the limit of $\eta^*_n$, which has been discussed in \cite{Mu2018a,Mu2018c1} and summarized in the following Lemma \ref{lm1}. To state the Lemma, we need the following assumptions.



\begin{assum}
	\label{assum1}
	\begin{enumerate}[1)]
		
		\item The dimension $p$ of the parameter $\beta$ is fixed.
		\item $X^TX/n$ converges to a  positive definite matrix $\Sigma$ as $n\xra{}\infty$.		
		\item $K(\eta)$ is a  symmetric and  positive definite matrix  parameterized by $\eta$ and $\|K(\eta)^{-1}\|< \infty$ for any interior point $\eta\in \Omega$.
		
	\end{enumerate}
\end{assum}

\begin{rem}
	Assumption 1.1 was adopted for linear regression models in  \cite{Golub1979,Erdal1983,Zhang1993,Shao1993}.
	Assumption 1.2 is stronger than $X^TX/n=O(1)$ in Propositions \ref{thm21}, it guarantees that an affine transform of the risk $R_n(\eta)$ has an explicit limiting expression depending on $\Sigma$, which is key to derive the limit of $\eta_n^*$.
	Assumption  1.3 can cover the ridge regression estimator \cite{Hoerl1970}, generalized ridge regression estimator \cite{Hoerl1970}, and the ridge-regression estimator \cite{Hall1987}.

\end{rem}

Under Assumption \ref{assum1}, there holds that 
\begin{align*}
W_n(\eta)& \eq n^{2} (  R_n(\eta) - \sigma^2p/n  )/\sigma^4\\
&\xra{}\beta^TK(\eta)^{-1}\Sigma^{-1}K(\eta)^{-1}\beta\!
-2{\rm Tr}
(\Sigma^{-1}\!K(\eta)^{-1})\eq W(\eta)
\end{align*}
as $n
\xra{}\infty$.
Note that $W_n(\eta)$ is an affine transform independent of the $\eta$ and thus, 
$
\eta_n^*=\argmin_{\eta \in \Omega} R_n(\eta)
=\argmin_{\eta \in \Omega}
W_n(\eta).$

We also need the following assumption. 
\begin{assum}\label{ass2}
	The global minima of  $W(\eta)$, i.e., 
	$\eta^*\eq \argmin_{\eta \in \Omega} W(\eta)$
	exist and moreover, are isolated and interior points of 
	$\Omega $.
\end{assum}

Then we are able to state the following lemma derived in \cite{Mu2018a}, which guarantees the existence of the limit of $\eta_n^*$.
\begin{lem}\label{lm1}
	Suppose that Assumptions \ref{assum1} and \ref{ass2} hold.
	Then, it holds that
	$\eta_n^*
	\xra{}
	\eta^* 
	$
	in probability as $n\xra{}\infty$,
	where the convergence 
	should be understood as  the set convergence in the following sense
	$\inf_{a\in \eta_n^*,b\in\eta^*} \|a - b\|\xra{}0$.
\end{lem}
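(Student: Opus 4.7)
The plan is to combine the pointwise convergence $W_n(\eta) \to W(\eta)$ already recorded in the excerpt with Assumption \ref{ass2} (isolated interior global minima of $W$) via a standard argmin-convergence argument. Note that $R_n(\eta)$ is a noise-averaged quantity, so $\eta_n^*$ is in fact deterministic and the convergence in the conclusion can be read as plain set convergence. The proof splits into two substantive pieces: first, upgrade the pointwise convergence of $W_n$ to locally uniform convergence on compact subsets of the interior of $\Omega$; second, exploit isolation of the minima to produce a minimizer of $W_n$ that converges to $\eta^*$.

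For the uniform convergence, I would write $W_n(\eta)$ explicitly in terms of $K(\eta)^{-1}$ and $X^T X / n$ and observe that $W_n$ is smooth in $\eta$ with partial derivatives bounded uniformly in $n$ on each compact interior subset; this uses Assumption \ref{assum1}.3 to control $\|K(\eta)^{-1}\|$ (and the obvious induced bound on $\|\partial_\eta K(\eta)^{-1}\|$) there, together with Assumption \ref{assum1}.2 to control $(X^T X/n)^{-1}$ for large $n$. Equicontinuity plus pointwise convergence then delivers uniform convergence on compacta. Next, for each $\eta^* \in \argmin W$, Assumption \ref{ass2} provides a closed ball $\bar B_\delta(\eta^*) \subset \Omega$ whose intersection with $\argmin W$ reduces to $\{\eta^*\}$, and continuity plus compactness of $\partial B_\delta(\eta^*)$ gives a strict gap $g_\delta := \min_{\partial B_\delta(\eta^*)} W - W(\eta^*) > 0$. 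Uniform convergence on $\bar B_\delta(\eta^*)$ forces $W_n$ to attain its minimum over this ball at an interior point $\tilde\eta_n$, with $W_n(\tilde\eta_n) \to W(\eta^*) = \min W$; a diagonal argument in $\delta \to 0$ then produces $\tilde\eta_n \to \eta^*$.

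To conclude, I would take any global minimizer $\hat\eta_n \in \eta_n^* = \argmin_\Omega W_n$ and run a subsequence argument: along any subsequence where $\hat\eta_n$ accumulates at some interior point $\bar\eta$, locally uniform convergence yields $W(\bar\eta) = \lim W_n(\hat\eta_n) \leq \lim W_n(\eta^*) = \min W$, so $\bar\eta \in \argmin W$, which delivers the (stronger) conclusion $d(\hat\eta_n, \argmin W) \to 0$. The main obstacle, and the step I expect to absorb most of the authors' work, is the tightness of $\hat\eta_n$ in the interior of $\Omega$: one must rule out global minimizers of $W_n$ drifting off to $\partial\Omega$ or to infinity, where the argument based on locally uniform convergence breaks down. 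A clean route is to upgrade the pointwise convergence outside a fixed compact set to a uniform lower bound $W_n(\eta) \geq \min W + \varepsilon$, exploiting either the blow-up of $\|K(\eta)^{-1}\|$ (and hence of $W(\eta)$) as $\eta$ approaches the boundary of $\Omega$, or a coercivity of $W$ at infinity coming from the quadratic dependence on $K(\eta)^{-1}\beta$. Either route traps $\hat\eta_n$ inside a neighbourhood of $\argmin W$ for all large $n$, thereby exhibiting the pair $(a_n, b_n) = (\hat\eta_n, \eta^*)$ that witnesses $\inf_{a \in \eta_n^*, b \in \eta^*} \|a - b\| \to 0$.
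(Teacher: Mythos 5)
The paper does not actually prove Lemma \ref{lm1}: it is imported from \cite{Mu2018a}, and the only in-paper analogue is the proof of Proposition \ref{prop3}, which runs exactly your argument in the scalar ridge case --- pointwise convergence of the affine transform of the risk to $W(\eta)$, uniform convergence on a compact neighbourhood of the limiting minimizer, and the extremum-estimator result of Lemma \ref{ct}. Your route is therefore essentially the intended one, and you are in fact more explicit than the paper about the one genuinely delicate step, namely ruling out global minimizers of $W_n$ escaping to $\partial\Omega$ or to infinity; the paper skips this localization entirely, while you flag it and sketch two remedies without fully verifying either under Assumptions \ref{assum1}--\ref{ass2} alone, which is an honest reflection of where the stated hypotheses are thinnest. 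Two small remarks: your observation that $\eta_n^*$ is deterministic (so ``in probability'' is vacuous here) is correct and worth making; and your equicontinuity route to uniform convergence uses differentiability of $\eta\mapsto K(\eta)$, which the paper only grants in Assumption \ref{ass3}, whereas bounding $|W_n(\eta)-W(\eta)|$ directly by $\bigl\|nG(\eta)^{-1}-\Sigma^{-1}\bigr\|$ times a polynomial in $\|K(\eta)^{-1}\|$ yields uniform convergence on sets where $\|K(\eta)^{-1}\|$ is bounded using Assumption \ref{assum1} only.
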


Now we let $\mathscr{C}_n(\eta)$ be a hyper-parameter estimation criterion and 
$\widehat{\eta}_n$ be the corresponding hyper-parameter estimator that minimizes $\mathscr{C}_n(\eta)$, i.e., 
$\widehat\eta_n=\argmin_{\eta\in \Omega} \mathscr{C}_n(\eta),$ 
where ``$\arg\min$'' has been defined in \eqref{eq:D}, and we define two kinds of a.o. of the hyper-parameter estimator $\widehat{\eta}_n$.

\begin{defn}
	\label{defn1}
	The estimator $\widehat{\eta}_n$  is said to be asymptotically optimal (a.o.) if  
	$\widehat{\eta}_n\xra{}\eta^*$ in probability as $n\xra{}\infty$,
	i.e.,
	$\inf_{a\in \widehat{\eta}_n,b\in\eta^*} \|a - b\|\xra{}0$ in probability
	as $n\xra{}\infty$.
\end{defn}

\begin{defn}
	\label{defn2}
	Denote the sets consisting of the roots of
	\begin{align}
	{\rm Tr}\left(
	\frac{\partial \mathscr{C}_n(\eta)}{\partial K}\frac{\partial K(\eta)}{\partial \eta_i}\right)=0, ~i=1,\cdots,m,
	\label{rootset1}
	\end{align}
	and
	\begin{align}
	&{\rm Tr}\left(
	\frac{\partial W(\eta)}{\partial K}\frac{\partial K(\eta)}{\partial \eta_i}\right)=0, ~~i=1,\cdots,m.
	\label{rootset2}
	\end{align}
	by $\widehat{\psi}_n$ and $\psi^*$,
	respectively. 
	Then, the estimator $\widehat{\eta}_n$ is said to be asymptotically optimal (a.o.) to the first order, if  
	$\widehat{\psi}_n\xra{}\psi^*$ in probability as $n\xra{}\infty$, i.e.,
	$\inf_{a\in \widehat{\psi}_n,b\in\psi^*} \|a - b\|\xra{}0$  in probability
	as $n\xra{}\infty$.
\end{defn} 

\begin{rem}
	Definition \ref{defn2} considers the limiting behavior of the roots of the first order optimality condition of $\mathscr{C}_n(\eta)$. Such kind of limiting behavior was studied before for the roots of the maximum likelihood estimators, e.g., \cite{Perlman1983}, \cite[Theorem 3.7 on p. 447]{Lehmann1998}.
\end{rem}

The following proposition shows that Definition  \ref{defn1} is equivalent to Definition  \ref{defn2} for the ridge regression estimator that is mainly studied in the existing literature, e.g., \cite{Craven1979,Golub1979,Wahba1985,Speckman1985,Li1986,Girard1991}.

\begin{prop}	\label{prop3}
	For the ridge regression estimator \dref{rr},  
	the sets $\eta^*$ and $\psi^*$ are identical and both of them contains
	a single point 
	$\eta^* = \psi^*
	=\frac{\sigma^2{\rm Tr}\big(\Sigma^{-1} \big)}{\beta^T\Sigma^{-1}\beta}.$
	Therefore, Definition \ref{defn1} and Definition \ref{defn2} are equivalent.
	In particular, for the orthogonal regressors $x_i,1\leq i \leq n$, namely, $X^TX=nI_p$ ($\Sigma=I_p$), there holds $\eta^* = \psi^*=\sigma^2p/\beta^T\beta$.
\end{prop}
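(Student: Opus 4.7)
The plan is to reduce both $\eta^*$ and $\psi^*$ to the unique minimizer of an explicit scalar convex quadratic, from which the stated formula can be read off. Comparing the ridge regression estimator \eqref{rr} with the general RLS form \eqref{rls2} gives $\sigma^2 K(\eta)^{-1}=\eta I_p$, i.e.\ $K(\eta)^{-1}=(\eta/\sigma^2)I_p$, with $\eta$ ranging over the interior of $[0,\infty)$.

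Substituting this into the limiting function $W(\eta)=\beta^T K(\eta)^{-1}\Sigma^{-1}K(\eta)^{-1}\beta-2\,{\rm Tr}(\Sigma^{-1}K(\eta)^{-1})$ displayed before Assumption \ref{ass2}, I obtain the scalar quadratic
\begin{align*}
W(\eta)=\frac{\eta^2}{\sigma^4}\,\beta^T\Sigma^{-1}\beta \;-\; \frac{2\eta}{\sigma^2}\,{\rm Tr}(\Sigma^{-1}).
\end{align*}
Under Assumption \ref{assum1}, $\Sigma$ is positive definite, so (for $\beta\neq 0$, which is tacitly needed for Assumption \ref{ass2} to hold) $\beta^T\Sigma^{-1}\beta>0$ and $W$ is strictly convex in $\eta$. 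Solving $W'(\eta)=0$ then yields the unique minimizer $\sigma^2\,{\rm Tr}(\Sigma^{-1})/(\beta^T\Sigma^{-1}\beta)>0$, which is an isolated interior point of $\Omega$, so the set $\eta^*$ from Assumption \ref{ass2} is exactly this singleton.

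For the set $\psi^*$ in Definition \ref{defn2}, the chain-rule identity $\partial W/\partial\eta={\rm Tr}\!\bigl((\partial W/\partial K)(\partial K/\partial\eta)\bigr)$ shows that \eqref{rootset2} is precisely the stationary condition $W'(\eta)=0$; since the strictly convex quadratic $W$ has a unique critical point, $\psi^*$ reduces to the same singleton, yielding $\eta^*=\psi^*$ and hence the equivalence of Definitions \ref{defn1} and \ref{defn2}. The orthogonal case $X^TX=nI_p$ corresponds to $\Sigma=I_p$, whence ${\rm Tr}(\Sigma^{-1})=p$ and $\beta^T\Sigma^{-1}\beta=\beta^T\beta$, recovering $\eta^*=\sigma^2 p/(\beta^T\beta)$.

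I do not foresee a real obstacle here: the algebra collapses because $K(\eta)^{-1}$ is a scalar multiple of the identity, so matrix-calculus subtleties in differentiating $\beta^T K^{-1}\Sigma^{-1}K^{-1}\beta$ never arise, and both the risk optimum and the first-order root condition reduce to the same scalar stationary equation. The only delicate point is the degenerate case $\beta=0$, for which $W$ becomes linear and decreasing in $\eta$ and Assumption \ref{ass2} fails; this case is implicitly excluded by the stated hypotheses.
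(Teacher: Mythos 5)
Your proof is correct and takes essentially the same route as the paper: both reduce $\eta^*$ and $\psi^*$ to the unique minimizer (equivalently, unique critical point) of the scalar convex quadratic $\eta^2\beta^T\Sigma^{-1}\beta-2\sigma^2\eta\,{\rm Tr}(\Sigma^{-1})$ and read off the stated formula. The only cosmetic difference is that you substitute the ridge kernel directly into the limiting function $W(\eta)$, whereas the paper re-derives that quadratic as the limit of $n^2(R_n(\eta)-\sigma^2 p/n)$; your explicit chain-rule justification that the root set \eqref{rootset2} coincides with $W'(\eta)=0$, and your remark excluding $\beta=0$, are details the paper leaves implicit.
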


\begin{rem}
	It is worth pointing out that a hyper-parameter estimator $\widehat{\eta}_n$ that is a.o. in the sense of Definition \ref{defn1} is also a.o. in the sense of expectation inefficiency \dref{ei}, but the converse is not true (See Proposition \ref{thm21}).
\end{rem}

\begin{rem} One may wonder why we did not choose the true loss \eqref{tl}. This is because  for the linear regression model \eqref{lrm} with fixed $p$, the true loss is a random variable as $n\rightarrow\infty$ and it can be proved that the limit of the true loss \eqref{tl} does not exist. It is also interesting to note that 
	the true loss and the risk are asymptotically equivalent for the linear regression model with $p\xra{}\infty$ as $n\xra{}\infty$, c.f., the equation (2.3) of \cite{Li1986}.
\end{rem}

\section{Brief Review of CV Methods}
\label{sec:cv}
In this section, we briefly review the LKOCV, GCV, RFCV and HOCV. Before proceeding to the details, we first introduce some conventions and notations. For brevity, we sometimes omit the parameter $\eta$ in $K(\eta)$ and $\eta$ in $\widehat{\beta}(\eta)$, and etc., and also the subscript $n$ for the estimators of $\eta$ to be introduced below.
We let the data $\{y_i,x_i^T,$ $i=1,\cdots,n\}$ be split into two disjoint parts: the training data $\{y_i,x_i^T,i\in s^c\}$ and validation data $\{y_i,x_i^T,i\in s\}$, where $s$ is a subset of $\{1,\cdots,n\}$ with the cardinality $k$ and $s^c$ is the complement of $s$ such that $s\cup s^c=\{1,\cdots,n\}$. Then we let $y_s$ be the column vector consisting of the entries of $y$ indexed by $s$ (the $i$-th element of $y_{s}$ is $y_j$ with $j$ being the $i$-th entry of $s$),
$X_s$ be the $k\times p$ matrix consisting of the rows of $X$ indexed by $s$,  $\widehat{\beta}_{s^c}$ be the RLS estimator \dref{rls} using the training data $\{y_i,X_i,i\in s^c\}$, namely,
\begin{align}
\widehat{\beta}_{s^c} = G_{s^c}^{-1} X_{s^c}^Ty_{s^c},~
G_{s^c}=X_{s^c}^TX_{s^c} +\sigma^2K^{-1},\label{rsc}
\end{align}
where $y_{s^c}$ and $X_{s^c}$ are constructed in a similar way as was done for $y_s$ and $X_s$.
With the above conventions and notations, the average prediction error over the validation data indexed by the set $s$ is defined by
\begingroup
\allowdisplaybreaks
\begin{align}
\label{pecv}
{\rm APE}_s
=\frac1k\|y_s - X_s\widehat{\beta}_{s^c}\|^2
=\frac1k\|Z_s^{-1}(y_s - X_s\widehat{\beta})\|^2,
\end{align}
\endgroup
where $\widehat{\beta}$ is the RLS estimator \dref{rls} using the data $\{y_i,x_i^T,$ $i=1,\cdots,n\}$, and
\begin{align}\label{ZsG}
Z_s=I_k-X_sG^{-1}X_s^T,~G=X^TX +\sigma^2K^{-1},
\end{align}
and the last identity follows from \dref{iden1} in the Appendix B.

Now we are ready to review the aforementioned CV methods. Clearly, for $k=1,\dots, n-1$, there are in total $\tbinom{n}{k}$ ways to split the data $\{y_i,x_i^T,$ $i=1,\cdots,n\}$,
where $\tbinom{n}{k}$ is the combination of $n$ objects taken $k$ at a time.
The LKOCV is to estimate $\eta$ by minimizing the average prediction errors over all the $\tbinom{n}{k}$ ways of splitting the data $\{y_i,x_i^T,$ $i=1,\cdots,n\}$:\begingroup
\allowdisplaybreaks
\begin{equation}\label{lkocv}
\begin{aligned}
{\rm LKOCV:}~~\widehat{\eta}_{\rm lkocv}
= \argmin_{\eta \in \Omega} \mathscr{C}_{\rm lkocv}(\eta),~~
\mathscr{C}_{\rm lkocv}(\eta)=\frac 1{\tbinom{n}{k}}\sum_{s\in \mathcal{S}}{\rm APE}_s,
\end{aligned}
\end{equation}
\endgroup
where $ \mathcal{S}$ is set consisting of $s$, each corresponding to one validation data obtained in one of the $\tbinom{n}{k}$ splitting ways.

Obviously, the LKOCV is computationally expensive to apply for large $n$ and moderately large $k$. To reduce the computational cost, a  special case of the LKOCV (\ref{lkocv}) with $k=1$ is often considered and called the LOOCV, which is also known as the predicted residual sums of squares (PRESS) \cite{Allen1974}:
\begin{equation}
\begin{aligned}
{\rm LOOCV:}~~\widehat{\eta}_{\rm loocv}
= \argmin_{\eta \in \Omega} \mathscr{C}_{\rm  loocv}(\eta),~
\mathscr{C}_{\rm  loocv}(\eta)
=\frac1n\sum_{i=1}^n\left(\frac{y_i-\widehat{y}_i}{1-M_{ii}} \right)^2,
\label{loocv}
\end{aligned}
\end{equation}
where $\widehat{y}_i$ is the $i$-th element of the predicted output $\widehat{y}=X\widehat{\beta}$ and $M_{ii}$ is the $i$-th diagonal element of
\begin{align}
M=X(X^T   X   + \sigma^2K^{-1})	^{-1}X^T \label{hat}.
\end{align}

To further reduce the computational cost, those different weights $1-M_{ii}$ for the prediction error in the LOOCV (\ref{loocv}) are replaced by their average $1-{\rm Tr}(M)/n$, leading to the GCV:
\begingroup
\allowdisplaybreaks
\begin{equation}
\begin{aligned}
{\rm GCV:}~~\widehat{\eta}_{\rm gcv}
&	= \argmin_{\eta \in \Omega} \mathscr{C}_{\rm gcv}(\eta)\\
\mathscr{C}_{\rm gcv}(\eta)
&=\frac1n\frac{\sum\limits_{i=1}^n\big(y_i-\widehat{y}_i\big)^2}
{\big(1-{\rm Tr}(M)/n\big)^2}
=\frac1n\frac{\|y-X\widehat{\beta}\|^2}
{\big(1-{\rm Tr}(M)/n\big)^2}. \label{gcv}
\end{aligned}
\end{equation}
\endgroup

The RFCV is another way to reduce the computation cost of the LKOCV.
Suppose that $k$ is a factor of $n$, namely, there exists an integer $r$ such that $n=kr$. Then the set $\{1,2,\cdots,n\}$ is divided into $r$ subsets
denoted by
\begin{align}
s_i=\{(i-1)k+1,\cdots,ik\},i=1,\cdots,r.\label{si}
\end{align}
Then the RFCV estimates  $\eta$ by
\begin{equation}\label{rfcv}
\begin{aligned}
{\rm RFCV:}~~\widehat{\eta}_{\rm rfcv}
= \argmin_{\eta \in \Omega} \mathscr{C}_{\rm rfcv}(\eta),~~
\mathscr{C}_{\rm rfcv}(\eta)
=\frac 1{r}\sum_{s\in \mathscr{S}}{\rm APE}_s,
\end{aligned}
\end{equation}
where $\mathscr{S}=\{s_i,i=1,\cdots,r\}$.
Obviously, the $n$-fold cross validation is exactly the LOOCV.

Lastly, the HOCV considers one validation data indexed by one $s\in\mathscr{S}$ in \dref{si} and estimates $\eta$ according to
\begin{equation}
\label{hocv}
\begin{aligned}
{\rm HOCV:}~~\widehat{\eta}_{\rm hocv}
= \argmin_{\eta \in \Omega} \mathscr{C}_{\rm hocv}(\eta),~~
\mathscr{C}_{\rm hocv}(\eta)
={\rm APE}_{s},
\end{aligned}
\end{equation}
where $s\in\mathscr{S}$ in \dref{si} and contains $k$ elements.

\section{Main Results}
\label{sec:main}

In this section, we study the a.o. of the hyper-parameter estimators reviewed in Section \ref{sec:cv}.

\subsection{Asymptotic Optimality of GCV Estimator for the RLS estimator \eqref{rls}}
\label{4.1}
We prove the a.o. of  the GCV estimator using Lemma \ref{ct} in the Appendix B.
The key idea
is to find an affine transform, irrespective of $\eta$, of the cost function $\mathscr{C}_{\rm gcv}(\eta)$ such that the transformed cost function converges to the limiting cost function $W(\eta)$.
\begin{thm}
	\label{thm1}
	Under Assumptions \ref{assum1}-\ref{ass2}, it holds that
	\begin{align}
	n^{2}\big(
	\mathscr{C}_{\rm gcv}(\eta) - \widehat{\sigma}^2(1+2p/n+3p^2/n^2)
	\big)/\sigma^4
	\xra{}W(\eta) \label{lgcv}
	\end{align}
	in probability  as $n\xra{}\infty$,
	where
	\begin{align}
	\widehat{\sigma}^2
	=(y^Ty-y^TX(X^T X)^{-1}X^Ty)/n\label{sigma}
	\end{align}
	is a consistent estimate for $\sigma^2$,
	and moreover, $\widehat{\eta}_{\rm gcv}$ is a.o..
\end{thm}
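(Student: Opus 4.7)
The plan is to establish the convergence \eqref{lgcv} by Taylor-expanding the GCV cost around its leading $\eta$-independent part, and then to deduce the a.o. of $\widehat\eta_{\rm gcv}$ by invoking Lemma \ref{ct} together with Lemma \ref{lm1}. The driving observation is that once an affine transform of $\mathscr{C}_{\rm gcv}(\eta)$ whose additive piece does not depend on $\eta$ is shown to converge in probability to the known limit $W(\eta)$, the minimizers of the two sides must agree asymptotically, which is exactly the a.o. conclusion.

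First I would handle the numerator $\|y - X\widehat\beta\|^2$ via the identity $(I_n - M)^2 = (I_n - P) + (P - M)^2$, where $P = X(X^TX)^{-1}X^T$ is the ordinary LS projection. This identity holds because $PM = MP = M$ (both project into the column space of $X$) and gives the clean decomposition $\|y - X\widehat\beta\|^2 = n\widehat\sigma^2 + \|(P-M)y\|^2$. Using a resolvent identity one obtains $P - M = \sigma^2 X(X^TX)^{-1}K(\eta)^{-1}AX^T$ with $A = (X^TX + \sigma^2 K(\eta)^{-1})^{-1}$, so that $(P-M)y = \sigma^2 X(X^TX)^{-1}K(\eta)^{-1}\widehat\beta$ and hence $n\|(P-M)y\|^2/\sigma^4 \to \beta^T K(\eta)^{-1}\Sigma^{-1}K(\eta)^{-1}\beta$ in probability, using $\widehat\beta \to \beta$ and $n(X^TX)^{-1} \to \Sigma^{-1}$ from Assumption \ref{assum1}.

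Next I would expand the denominator using $(1 - \mathrm{Tr}(M)/n)^{-2} = 1 + 2\mathrm{Tr}(M)/n + 3(\mathrm{Tr}(M)/n)^2 + O(1/n^3)$, valid since $\mathrm{Tr}(M)\le p$ remains bounded. Writing $\mathrm{Tr}(M) = p - \sigma^2 \mathrm{Tr}(AK(\eta)^{-1})$ and using $nA \to \Sigma^{-1}$ gives $n(\mathrm{Tr}(M) - p) \to -\sigma^2 \mathrm{Tr}(\Sigma^{-1}K(\eta)^{-1})$. Multiplying the two expansions, the $\eta$-free part of the product coalesces, up to $O(1/n^3)$, into exactly $\widehat\sigma^2(1 + 2p/n + 3p^2/n^2)$; subtracting it, multiplying by $n^2/\sigma^4$, and using $\widehat\sigma^2 \to \sigma^2$ (a standard fixed-$p$ fact), the two surviving $\eta$-dependent pieces sum to $\beta^T K(\eta)^{-1}\Sigma^{-1}K(\eta)^{-1}\beta - 2\mathrm{Tr}(\Sigma^{-1}K(\eta)^{-1}) = W(\eta)$, proving \eqref{lgcv}.

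Since the $\eta$-independent correction does not affect the minimizer, $\widehat\eta_{\rm gcv}$ also minimizes the left-hand side of \eqref{lgcv}; by Lemma \ref{lm1} and Lemma \ref{ct} one then concludes $\widehat\eta_{\rm gcv} \to \eta^*$ in probability, i.e., a.o. The main obstacle I anticipate is promoting the pointwise convergence in \eqref{lgcv} to the uniform convergence on a neighborhood of $\eta^*$ that Lemma \ref{ct} presumably requires, together with some tightness argument preventing $\widehat\eta_{\rm gcv}$ from escaping to the boundary of $\Omega$. Both should follow because all $\eta$-dependence enters smoothly through $K(\eta)^{-1}$ and $A$, and $\|K(\eta)^{-1}\|$ is bounded on any compact set of interior points by Assumption \ref{assum1}.3, while the isolation of $\eta^*$ is guaranteed by Assumption \ref{ass2}.
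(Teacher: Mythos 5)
Your proposal follows essentially the same route as the paper's proof: the identical Taylor expansion of $\big(1-{\rm Tr}(M)/n\big)^{-2}$, the same decomposition $\|y-X\widehat{\beta}\|^2=n\widehat{\sigma}^2+\|(P-M)y\|^2$ (which is exactly the content of \dref{nid1} in Lemma \ref{lem4}), the same term-by-term limits including $n({\rm Tr}(M)-p)\xra{}-\sigma^2{\rm Tr}(\Sigma^{-1}K^{-1})$, and the same concluding appeal to Lemma \ref{ct} via uniform convergence on a compact neighborhood of $\eta^*$. The argument is correct and matches the paper's in all essential respects.
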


\begin{rem} It is worth to note that it was mentioned in \cite[p. 1355]{Li1985} that for the ridge regression estimator \eqref{rr}, the consistency of the GCV estimator was studied in \cite{Erdal1983} under the assumption that $p$ is fixed and $X^TX/n=O(1)$. Unfortunately, the authors are unable to find and access \cite{Erdal1983} and thus cannot make a detailed comparison with \cite{Erdal1983}. 
\end{rem}

\begin{rem} The a.o. of the GCV estimator in the sense of \eqref{aor} requires that $X^TX$ tends to almost singular as $n\xra{}\infty$ in \cite{Li1986}. In contrast, the a.o. of the GCV estimator requires that $X^TX$ converges to a positive definite as $n\xra{}\infty$.	
\end{rem}

\subsection{Asymptotic Optimality of other CV based Hyper-parameter Estimators  for the RLS estimator \eqref{rls}}
\label{4.2}
Since it is difficult to find an affine transform of the cost functions of the LOOCV, LKOCV, RFCV, and HOCV estimators as that used for the GCV estimator \eqref{gcv}, we show their a.o. 
to the first order
by exploring their first order optimality condition with the help of Lemma \ref{fdt} in the Appendix B.

For investigating the a.o. to the first order of these CV estimators except for the GCV estimator,  we first list some extra assumptions on the finite forth-moment of $\{\varepsilon_i,i=1,\dots, n\}$ and on the uniform boundedness of  $x_i^T,~i=1,\dots, n$.
\begin{assum} \label{ass3}
	\begin{enumerate}[1)]
		\item For $i=1,\cdots,n$, $E(|\varepsilon_i|^4)\leq c_3<\infty$.
		\item The vectors $x_i^T,~i=1,\dots, n$,  are uniformly bounded from above by a constant $c_4>0$, namely, $\sup_{n}\max_{i=1,\dots, n}\|x_i\|\leq c_4$.
		\item For $i=1,\cdots,m$, $\frac{\partial K(\eta)}{\partial \eta_i}$ is continuous over $\eta\in\Omega$. 
	\end{enumerate}
\end{assum}

\subsubsection{Asymptotic Optimality of LOOCV based Hyper-parameter Estimator  for the RLS estimator \eqref{rls}}

The a.o. to the first order of the  LOOCV estimator is given in the following theorem.
\begin{thm}
	\label{thm2}
	Under Assumptions \ref{assum1}-\ref{ass3}, it hold that 
	\begin{align}
	n^{2}\frac{\partial \mathscr{C}_{\rm loocv}(\eta)}{\partial K}
	\xra{}
	2\sigma^4
	K^{-1}
	\Sigma^{-1}
	K^{-1}
	(K -  \beta \beta ^T)
	K^{-1}
	\label{fdloocv}
	\end{align}
	in probability  as $n\xra{}\infty$ and moreover, $\widehat{\eta}_{\rm loocv}$ is a.o. to the first order.
\end{thm}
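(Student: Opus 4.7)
The plan is to derive an asymptotic expansion of $\mathscr{C}_{\rm loocv}$ at order $1/n^2$, carry out the matrix differentiation piece by piece, combine the two surviving leading contributions after multiplication by $n^2$, and then invoke Lemma \ref{fdt} to pass from the in-probability limit of the score to that of its roots. Starting from the equivalent representation in \eqref{pecv} with $k=1$, one has
\[
\mathscr{C}_{\rm loocv}(\eta) = \frac{1}{n}\sum_{i=1}^n \frac{r_i^2}{(1-M_{ii})^2},\qquad r=(I-M)y,\quad M=XG^{-1}X^T,\quad G=X^TX+\sigma^2K^{-1}.
\]
Under Assumptions \ref{assum1} and \ref{ass3}, the uniform bound $M_{ii}\le x_i^T(X^TX)^{-1}x_i=O(1/n)$ holds (since $G-X^TX=\sigma^2 K^{-1}$ is positive definite and $\lambda_{\min}(X^TX)=\Theta(n)$ by Assumption \ref{assum1}.2), which legitimizes the geometric expansion $(1-M_{ii})^{-2}=1+2M_{ii}+O(M_{ii}^2)$ and gives
\[
\mathscr{C}_{\rm loocv}(\eta)=\frac{1}{n}\|r\|^2+\frac{2}{n}\sum_{i=1}^n M_{ii}r_i^2+R_n(\eta),\qquad R_n(\eta)=O_p(1/n^2).
\]

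Next I would differentiate each piece with respect to the symmetric matrix variable $K$, using $\partial G^{-1}/\partial K_{ab}=\sigma^2G^{-1}K^{-1}E_{ab}K^{-1}G^{-1}$ together with the clean reductions $(I-M)X=\sigma^2 XG^{-1}K^{-1}$ and $X^Tr=\sigma^2 K^{-1}\widehat\beta$. A direct computation yields
\[
\frac{\partial}{\partial K}\Big(\frac{\|r\|^2}{n}\Big)=-\frac{2\sigma^4}{n}\,K^{-1}G^{-1}K^{-1}\widehat\beta\widehat\beta^TK^{-1},
\]
and since $nG^{-1}\to\Sigma^{-1}$ by Assumption \ref{assum1}.2 and $\widehat\beta\to\beta$ in probability under Assumptions \ref{assum1} and \ref{ass3}, multiplication by $n^2$ sends this to $-2\sigma^4K^{-1}\Sigma^{-1}K^{-1}\beta\beta^TK^{-1}$. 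For the second piece, the dominant contribution comes from differentiating $M_{ii}$, for which $\partial M_{ii}/\partial K=\sigma^2 K^{-1}G^{-1}x_ix_i^TG^{-1}K^{-1}$; combined with the weak law $\sum_i r_i^2 x_ix_i^T/n\to\sigma^2\Sigma$ in probability (which uses Assumption \ref{ass3}.1 to control the variance of $\varepsilon_i^2$ and Assumption \ref{ass3}.2 to bound $\|x_i\|$), together with $X^TX=G-\sigma^2K^{-1}$ and $nG^{-1}\to\Sigma^{-1}$, the $n^2$-scaled contribution of this piece converges to $2\sigma^4K^{-1}\Sigma^{-1}K^{-1}$. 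The cross piece, in which the derivative falls on $r_i^2$, reduces to $-(4\sigma^2/n)K^{-1}G^{-1}\bigl(\sum_i M_{ii}r_ix_i\bigr)\widehat\beta^TK^{-1}$, and the easy variance bound $\sum_i M_{ii}r_ix_i=O_p(1/\sqrt n)$ shows its $n^2$-scaled size is $O_p(1/\sqrt n)\to 0$; the derivative of $R_n$ is similarly shown to be $o_p(1/n^2)$ entrywise.

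Adding the two surviving leading terms yields exactly $2\sigma^4K^{-1}\Sigma^{-1}K^{-1}(K-\beta\beta^T)K^{-1}$, which is \eqref{fdloocv}. A direct symmetrization shows that this matrix differs from $\sigma^4\,\partial W/\partial K$ only by an antisymmetric matrix, so the trace against any symmetric $\partial K/\partial\eta_i$ produces the same scalar and the root set of the limiting first-order condition coincides with $\psi^*$ in \eqref{rootset2}. The a.o. of $\widehat\eta_{\rm loocv}$ to the first order then follows from Lemma \ref{fdt}, combined with the isolated interior-point structure in Assumption \ref{ass2} and the continuity of $\partial K(\eta)/\partial\eta_i$ in Assumption \ref{ass3}.3.

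The main obstacle will not be the algebraic bookkeeping but rather lifting these pointwise-in-$\eta$ convergences (the weak law for $\sum_i r_i^2x_ix_i^T$ and the $O_p$ estimates on the cross and remainder terms) to convergences uniform over a compact neighborhood of $\psi^*$, as Lemma \ref{fdt} demands. This uniformity is obtained by exploiting the continuity of $K(\eta)^{-1}$ (Assumption \ref{assum1}.3) and of $\partial K(\eta)/\partial\eta_i$ (Assumption \ref{ass3}.3) to bound every factor involving $G^{-1}$ or $K^{-1}$ uniformly on the neighborhood, combined with maximal-inequality arguments that use the fourth-moment hypothesis in Assumption \ref{ass3}.1 to dominate the stochastic fluctuations.
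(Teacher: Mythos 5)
Your proposal is correct and reaches the paper's limit by essentially the same strategy: isolate the two surviving $O(1/n^2)$ contributions (the one from the derivative hitting the weight, giving $2\sigma^4K^{-1}\Sigma^{-1}K^{-1}$ via the weak law $X^T\diag(\varepsilon_i^2)X/n\to\sigma^2\Sigma$, and the one from the derivative hitting the residual, giving $-2\sigma^4K^{-1}\Sigma^{-1}K^{-1}\beta\beta^TK^{-1}$ via $X^Tr=\sigma^2K^{-1}\widehat\beta$), kill the cross term, and invoke Lemma \ref{fdt} with a uniform-convergence argument. The one genuine difference is the order of operations: the paper differentiates the exact criterion first (Lemma \ref{lem3} keeps the exact weights $(1-M_{ii})^{-3}$ and $(1-M_{ii})^{-2}$) and only then replaces them by $1+O(1/n)$, whereas you Taylor-expand $\mathscr{C}_{\rm loocv}$ first and differentiate term by term. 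Your route then obliges you to bound $n^2\,\partial R_n/\partial K$ separately --- the bound $R_n=O_p(1/n^2)$ alone does not control its derivative --- and your proposal only asserts this; it does go through because the remainder is the explicit quantity $n^{-1}\sum_i r_i^2\bigl((1-M_{ii})^{-2}-1-2M_{ii}\bigr)$, whose $K$-derivative is $O_p(1/n^3)$ by the same $M_{ii}=O(1/n)$, $\partial M_{ii}/\partial K=O(1/n^2)$ estimates, but that computation should be written out rather than waved at. On the other side of the ledger, your explicit verification that the limit matrix differs from $\sigma^4\,\partial W/\partial K$ by an antisymmetric matrix --- so that tracing against the symmetric $\partial K(\eta)/\partial\eta_i$ yields exactly the root set $\psi^*$ of \eqref{rootset2} --- is a detail the paper leaves implicit, and it is worth keeping.
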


\subsubsection{Asymptotic Optimality of LKOCV Estimator}

For the LKOCV estimator \eqref{lkocv} with $k\geq 2$, especially $k\xra{}\infty$ as $n\xra{}\infty$,
the derivation of its a.o. requires more sophisticated analysis, which depends on the limit of the ratio between the number of samples contained in the validation data and the whole data, i.e., 
$
\lambda\eq \lim_{n\xra{}\infty}k/n.$

Moreover, we need one more assumption on the regression matrix $X$ to guarantee the a.o. to the first order of the LKOCV for the case $0< \lambda <1$.

\begin{assum} \label{assum4}
	When    $k\xra{}  \infty$ as $n\xra{}\infty$ and $0< \lambda <1$, the regression matrix $X$ satisfies 
	$
	X_s^TX_s / k \xra{}\Sigma~\mbox{as} ~k\xra{}\infty$
	for all $s\in \mathcal{S}$,
	where $\mathcal{S}$ was defined in \eqref{lkocv}.
\end{assum}	
\begin{rem}
	Assumptions similar to Assumption \ref{assum4} are used in \cite[Theorem 1]{Shao1993} and \cite[Assumption B]{Zhang1993} to prove asymptotic properties of the CV methods for model order selection of linear regression problems.
\end{rem}
Then the a.o. to the first order of the LKOCV is given in the following theorem.
\begin{thm}
	\label{thm3}
	Suppose Assumptions \ref{assum1}-\ref{ass3} hold.	
	
	1) If $\lambda=0$, then $\widehat{\eta}_{\rm lkocv} $ is a.o. to the first order.
	
	2) If $0< \lambda <1$ and Assumption \ref{assum4} holds, then $\widehat{\eta}_{\rm lkocv} $ is a.o. to the first order.
\end{thm}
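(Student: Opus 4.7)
The plan is to apply Lemma \ref{fdt} in the same manner as in the proof of Theorem \ref{thm2}: I would compute $\partial\mathscr{C}_{\rm lkocv}(\eta)/\partial K$, show that the $n^{2}$-scaled derivative converges in probability, uniformly on compact subsets of the interior of $\Omega$, to the same limit matrix $2\sigma^{4}K^{-1}\Sigma^{-1}K^{-1}(K-\beta\beta^{T})K^{-1}$ that appeared in \eqref{fdloocv}, and then invoke Lemma \ref{fdt} to pass from convergence of the derivative to convergence of the roots of the first-order condition. Writing ${\rm APE}_s=\frac{1}{k}(y-X\widehat{\beta})^{T}E_s(I-M_{ss})^{-2}E_s^{T}(y-X\widehat{\beta})$, where $E_s$ is the selector matrix associated with $s\in\mathcal{S}$ and $M_{ss}$ is the principal $k\times k$ block of $M=XG^{-1}X^{T}$ indexed by $s$, the gradient decomposes into contributions from differentiating $(I-M_{ss})^{-2}$ and from $\partial\widehat{\beta}/\partial K$; this mirrors the LOOCV derivation and serves as the common starting point in both regimes.

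For Case 1 ($\lambda=0$), Assumption \ref{assum1} gives $\|X_sG^{-1}X_s^{T}\|=O(k/n)\to 0$, so a Neumann expansion yields $(I-M_{ss})^{-2}=I_k+2M_{ss}+O((k/n)^{2})$. Substituting and averaging $\frac{1}{\binom{n}{k}}\sum_{s}(\cdot)$ over all subsets reduces the LKOCV derivative, up to terms of order $o(1/n^{2})$, to an expression whose structure is identical to the LOOCV derivative scaled by a combinatorial factor that normalizes to $1$. Hence the convergence \eqref{fdloocv} transfers verbatim, giving a.o. to the first order.

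For Case 2 ($0<\lambda<1$) the Neumann expansion fails, so I would work directly with $y_s-X_s\widehat{\beta}_{s^c}=\varepsilon_s-X_s(\widehat{\beta}_{s^c}-\beta)$. Assumption \ref{assum4} together with $X^{T}X/n\to\Sigma$ implies $X_{s^c}^{T}X_{s^c}/(n-k)\to\Sigma$, hence $G_{s^c}/(n-k)\to(1-\lambda)\Sigma$; expanding $\widehat{\beta}_{s^c}-\beta=G_{s^c}^{-1}X_{s^c}^{T}\varepsilon_{s^c}-\sigma^{2}G_{s^c}^{-1}K^{-1}\beta$ as in the LOOCV proof, the bias-squared contribution to ${\rm APE}_s$ produces the $\beta^{T}K^{-1}\Sigma^{-1}K^{-1}\beta$ term in $W(\eta)$ and the trace of the variance part produces $-2{\rm Tr}(\Sigma^{-1}K^{-1})$ after averaging over $s$. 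The cross terms vanish in probability under Assumption \ref{ass3}.

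The main obstacle lies in Case 2: the $\binom{n}{k}$-fold symmetric average involves $p\times p$ matrices $G_{s^c}^{-1}$ and $k\times k$ forms in $\varepsilon$ that are not independent across $s$, so convergence in probability of the derivative requires a variance bound on this highly correlated, U-statistic-like sum. I would exploit exchangeability of the summands together with Assumption \ref{ass3}.1 (bounded fourth moments), Assumption \ref{ass3}.2 (uniformly bounded regressors), and the uniform subsample convergence in Assumption \ref{assum4}; Assumption \ref{ass3}.3 (continuity of $\partial K(\eta)/\partial\eta_i$) then upgrades pointwise convergence to uniform convergence on a compact neighbourhood of $\psi^{*}$, which is precisely the hypothesis needed to apply Lemma \ref{fdt}.
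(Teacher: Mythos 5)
Your overall strategy --- differentiate $\mathscr{C}_{\rm lkocv}$, show uniform convergence in probability of the $n^{2}$-scaled gradient to a positive multiple of $K^{-1}\Sigma^{-1}K^{-1}(K-\beta\beta^{T})K^{-1}$, and invoke Lemma \ref{fdt} --- is exactly the paper's (see Remark \ref{rem_proof} and the proof of Theorem \ref{thm3}), and your Case 1 reduction via $X_s^{T}Z_s^{-2}=(1+o(1))X_s^{T}$ matches \eqref{l2}. Two points in Case 2, however, are wrong as stated. First, the limit there is \emph{not} the matrix of \eqref{fdloocv}: it is $\frac{2\sigma^{4}}{(1-\lambda)^{2}}K^{-1}\Sigma^{-1}K^{-1}(K-\beta\beta^{T})K^{-1}$ (see \eqref{lkocv_d}); the extra positive factor is harmless for the roots of the first-order condition, but the claim that the LOOCV limit transfers verbatim should be corrected. (Also $G_{s^c}/(n-k)\to\Sigma$, not $(1-\lambda)\Sigma$; the factor $1-\lambda$ appears only when you normalize by $n$.)

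The substantive gap is your assertion that ``the cross terms vanish in probability.'' For $0<\lambda<1$ they do not, and they cannot be discarded. After the exact combinatorial collapse (each pair $i\neq j$ lies in $\tbinom{n-2}{k-2}$ of the subsets), the averaged pure-noise quadratic term $\frac{1}{n\tbinom{n-1}{k-1}}\sum_{s}X_s^{T}\varepsilon_s\varepsilon_s^{T}X_s=X^{T}\diag(\varepsilon)A\,\diag(\varepsilon)X/n$ has off-diagonal weights $A_{ij}=(k-1)/(n-1)\to\lambda>0$, so it converges to $\sigma^{2}\Sigma$ \emph{plus} the non-degenerate random matrix $\lambda X^{T}\varepsilon\varepsilon^{T}X/n$; it is precisely the cross term $-\frac{1}{n\tbinom{n-1}{k-1}}\sum_{s}X_s^{T}\varepsilon_s\varepsilon^{T}XG^{-1}X_s^{T}X_s=-\lambda X^{T}\varepsilon\varepsilon^{T}X/n+o_p(1)$ that cancels this random part (the computation in \eqref{b1}, yielding the deterministic limit $(1-\lambda)\sigma^{2}\Sigma$). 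If you drop the cross terms, the scaled gradient retains an $O_p(1)$ random component and Lemma \ref{fdt} cannot be applied. Your proposed remedy --- a variance bound via exchangeability --- works only once you bound the variance of the \emph{combined} quadratic form, whose off-diagonal coefficients are $o(1)$ after the cancellation; term by term the variances do not vanish. In Case 1 this issue is absent only because $(k-1)/(n-1)\to0$, which is why the LOOCV computation does transfer there.
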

\begin{rem}
	The statement $\lambda=0$ includes the cases: 1)  $k$ is  a fixed integer; 2) $k=O(n^\tau)$ with $0< \tau <1$, and etc..
\end{rem}

\subsubsection{Asymptotic Optimality of RFCV based Hyper-parameter Estimator for the RLS estimator \eqref{rls}}

The a.o. to the first order of the RFCV estimator \eqref{rfcv} depends on whether or not the $r$ in \eqref{si} tends to infinity as $n\xra{}\infty$.

\begin{thm}
	\label{thm4}
	Suppose Assumptions \ref{assum1}-\ref{assum4} hold.
	
	1) If $r \xra{}\infty$ as $n\xra{}\infty$, then $\widehat{\eta}_{\rm rfcv} $ is a.o. to the first order.
	
	2) If $r$ is a fixed positive integer,  then
	\begin{align*}
	n^{2}\frac{\partial \mathscr{C}_{\rm rfcv}(\eta)}{\partial K}
	&=
	\frac{2\sigma^4r^2}{(r - 1)^2}
	\big(K^{-1}\Sigma^{-1} K^{-1}
	-  K^{-1}\Sigma^{-1}K^{-1}\beta \beta ^TK^{-1}\big)\\
	&\hspace{5mm}
	+\frac{2\sigma^2r^2}{(r - 1)^2}
	\big(K^{-1}\Sigma^{-1}
	S'_n
	\Sigma^{-1} K^{-1}
	\big)
	+o_p(1)\nonumber
	\end{align*}
	where  $S'_n\eq X^T L' X/n=O_p(1)$ has zero mean and finite variance and the $(kl)$-element of $L'$ is defined as
	\begin{align}
	\label{li}
	\left\{
	\begin{array}{ll}
	0 &\mbox{ if } k=l \\
	\varepsilon_k\varepsilon_l & \mbox{ if }  k\in s \mbox{ and } l\in s \mbox{ for some $s\in \mathscr{S}$} \mbox{ but } k\neq l\\
	-1/(r-1) \varepsilon_k\varepsilon_l  & \mbox{ otherwise}.
	\end{array}\right.
	\end{align}
	If $S'_n\xra{}0$ in probability as $n\xra{}\infty$, then $\widehat{\eta}_{\rm rfcv} $ is a.o. to the first order, and otherwise, $\widehat{\eta}_{\rm rfcv} $ is in general not a.o. to the first order. 
\end{thm}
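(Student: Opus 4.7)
The plan is to apply Lemma~\ref{fdt} to the first-order optimality condition of $\mathscr{C}_{\rm rfcv}(\eta)$, just as was done for the LOOCV and LKOCV estimators in Theorems~\ref{thm2} and \ref{thm3}. Starting from ${\rm APE}_s = \tfrac{1}{k}\|Z_s^{-1}(y_s-X_s\widehat{\beta})\|^2$ in \eqref{pecv}, one differentiates $\mathscr{C}_{\rm rfcv}(\eta) = \tfrac{1}{r}\sum_{s\in\mathscr{S}}{\rm APE}_s$ with respect to $K$, tracking the $K$-dependence of both $Z_s$ and $\widehat{\beta}$. Using the representation $\widehat{\beta} = \beta - \sigma^{2}G^{-1}K^{-1}\beta + G^{-1}X^{T}\varepsilon$, the expansion $G^{-1} = n^{-1}\Sigma^{-1} + o_p(n^{-1})$ from Assumption~\ref{assum1}, and the Neumann series $Z_s^{-1} = I_k + X_sG^{-1}X_s^T + \cdots$, one decomposes the residual on fold $s$ as $y_s - X_s\widehat{\beta} = \varepsilon_s + \sigma^{2}X_sG^{-1}K^{-1}\beta - X_sG^{-1}X^T\varepsilon$ and collects the contributions of each order in $1/n$.

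For Part~1 ($r\to\infty$, equivalently $k/n\to 0$), the correction $X_sG^{-1}X_s^T$ is uniformly $O_p(k/n)$, so the Neumann expansion beyond the identity contributes to lower order. The leading term of $n^{2}\,\partial\mathscr{C}_{\rm rfcv}/\partial K$ therefore has the same structure as the LOOCV expression computed in Theorem~\ref{thm2}. The fold-induced noise cross-products, organized exactly as in the matrix $L'$ of \eqref{li}, have zero mean by independence of the $\varepsilon_i$, and under Assumption~\ref{ass3} their variance after division by $n^2$ is $O(k/n)+O(k^2/n^2)$, which vanishes as $r\to\infty$. Thus $n^{2}\,\partial\mathscr{C}_{\rm rfcv}/\partial K \xra{} 2\sigma^{4}K^{-1}\Sigma^{-1}K^{-1}(K-\beta\beta^{T})K^{-1}$ in probability, which is a strictly positive multiple of $\partial W(\eta)/\partial K$; Lemma~\ref{fdt} then yields first-order a.o.

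For Part~2 ($r$ fixed, so $\lambda = 1/r > 0$), Assumption~\ref{assum4} is needed to give $X_s^{T}X_s/k\to\Sigma$ per fold, which controls the deterministic leading terms. Carrying the same Neumann expansion one step further produces the prefactor $r^{2}/(r-1)^{2}$: the $(r-1)^{-1}$ comes from writing each $X_sG^{-1}X_s^T$ term via $X^{T}X = \sum_s X_s^{T}X_s$ and isolating the fold-$s$ piece. Collecting quadratic noise contributions $\varepsilon_k\varepsilon_l$ and grouping them by whether the indices lie in the same fold (weight $+1$) or in different folds (weight $-1/(r-1)$) gives precisely the matrix $L'$ of \eqref{li} and hence $S'_n = X^{T}L'X/n$. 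One verifies $E(S'_n)=0$ using independence of the $\varepsilon_i$ and finite variance using the fourth-moment bound in Assumption~\ref{ass3}.1 together with $\sup_{n,i}\|x_i\|<\infty$, which gives the claimed expansion. The consequence via Lemma~\ref{fdt} is then immediate: if $S'_n\xra{}0$ in probability, the limit is still a positive multiple of $\partial W/\partial K$ and first-order a.o. holds; if not, the extra term $K^{-1}\Sigma^{-1}S'_n\Sigma^{-1}K^{-1}$ perturbs the first-order equation by a stochastic quantity that is generically nonzero in the limit, so the roots need not converge to $\psi^*$.

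The main obstacle is the combinatorial bookkeeping behind Part~2. One must track exactly which products $\varepsilon_k\varepsilon_l$ survive at order $n^{-2}$ after summing $\tfrac{1}{r}\sum_{s}$ over folds, confirm that the diagonal ($k=l$) terms cancel with the corresponding $\widehat{\sigma}^2$-type deterministic contributions, and show that the within-fold and cross-fold weights are exactly $+1$ and $-1/(r-1)$, respectively. A secondary technical point is carrying out the Neumann expansions of $G^{-1}$, $Z_s^{-1}$, and $\widehat{\beta}$ uniformly in $\eta$ on a compact neighbourhood of $\eta^*$, so that convergence of the stationary set, as opposed to pointwise convergence, can be extracted from Lemma~\ref{fdt}.
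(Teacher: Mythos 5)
Your proposal follows essentially the same route as the paper: differentiate $\mathscr{C}_{\rm rfcv}$ via the formula for $\partial\,{\rm APE}_s/\partial K$, expand $Z_s^{-1}=I_k+X_sG_{s^c}^{-1}X_s^T$ (giving the factor $r/(r-1)$ per power of $Z_s^{-1}$ when $r$ is fixed, since $G_{s^c}^{-1}X_s^TX_s\to (r-1)^{-1}I_p$), decompose $y_s-X_s\widehat\beta$ as you describe, obtain $L'$ and $S'_n$ from the difference between the block-diagonal noise matrix and $r^{-1}X^T\varepsilon\varepsilon^TX$, and invoke Lemma~\ref{fdt} with a uniform-convergence check. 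One small correction to your bookkeeping checklist: the diagonal ($k=l$) terms do not cancel against $\widehat{\sigma}^2$-type contributions — in the paper they are split off as $L''=\diag(\varepsilon_1^2,\dots,\varepsilon_n^2)$ and supply the $\sigma^2\Sigma$ limit that produces the leading deterministic term $\tfrac{2\sigma^4r^2}{(r-1)^2}K^{-1}\Sigma^{-1}K^{-1}$.
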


\begin{rem}
	Define the matrix $A'$ with the $(kl)$-element corresponding to $L'$ in \dref{li} as
	\begin{align*}
	\left\{
	\begin{array}{ll}
	0 &\mbox{ if } k=l \\
	1 & \mbox{ if }  k\in s \mbox{ and } l\in s \mbox{ for some $s\in \mathscr{S}$} \mbox{ but } k\neq l\\
	-1/(r-1)  & \mbox{ otherwise}.
	\end{array}\right.
	\end{align*}
	If there exists some $ij$ such that $|X_{ki}|>\alpha>0$ and $|X_{lj}|>\beta>0$ for all $1\leq k,l \leq n$, then under Assumption \ref{ass3} the variance of the $(ij)$-element of $S'_n$ is
	$\frac{\sigma^4}{n^2}\sum_{k\neq l}
	( X^T_{ik} A'_{kl}X_{lj} )^2>
	\frac{\sigma^4\alpha^2\beta^2(n^2-n)}{n^2(r-1)^2}
	\geq\frac{\sigma^4\alpha^2\beta^2}{2(r-1)^2}>0.$
	This means that $S'_n\nrightarrow{}0$ in probability  as $n\xra{}\infty$ and hence   $\widehat{\eta}_{\rm rfcv} $ is  not a.o. to the first order for this case. 
\end{rem}

\subsubsection{Asymptotic Properties of HOCV based Hyper-parameter estimator  for the RLS estimator \eqref{rls}}

Lastly, we consider the HOCV estimator \dref{hocv}.
\begin{thm}
	\label{thm5}
	Suppose Assumptions \ref{assum1}-\ref{assum4} hold.
	If $r$ is a fixed positive integer, then
	\begin{align*}
	n^{3/2}\frac{\partial \mathscr{C}_{\rm hocv}(\eta)}{\partial K}
	=-\frac{2\sigma^2r^3}{(r - 1)^2}
	K^{-1}\Sigma^{-1}
	\left(
	\frac{1}{\sqrt{n}}
	\big(X_s^T\varepsilon_s - r^{-1}X^T\varepsilon\big)\right)
	\beta K^{-1}
	+o_p(1).
	\end{align*}
	Therefore,  $\widehat{\eta}_{\rm hocv} $ is in general not a.o. to the first order. 
\end{thm}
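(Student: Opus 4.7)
The plan is to compute the matrix derivative $\partial \mathscr{C}_{\rm hocv}(\eta)/\partial K$ directly from the closed form $\mathscr{C}_{\rm hocv}(\eta)=\frac{1}{k}\|y_s-X_s\widehat\beta_{s^c}\|^2$, with $\widehat\beta_{s^c}=G_{s^c}^{-1}X_{s^c}^Ty_{s^c}$ and $G_{s^c}=X_{s^c}^TX_{s^c}+\sigma^2K^{-1}$, and then isolate the leading stochastic term. Using the identity $\partial G_{s^c}^{-1}/\partial K=\sigma^2G_{s^c}^{-1}K^{-1}(\cdot)K^{-1}G_{s^c}^{-1}$ and the chain rule, I expect to obtain, up to symmetrization,
\begin{equation*}
\frac{\partial \mathscr{C}_{\rm hocv}(\eta)}{\partial K}=-\frac{2\sigma^{2}}{k}K^{-1}G_{s^c}^{-1}X_s^{T}(y_s-X_s\widehat\beta_{s^c})\,\widehat\beta_{s^c}^{T}K^{-1}.
\end{equation*}
This mirrors the derivative computation that underlies the proofs of Theorems~\ref{thm2}--\ref{thm4} in the paper; the new feature is only that we are not averaging over $s$.

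Next I would expand the residual. Writing $\widehat\beta_{s^c}=\beta-\sigma^{2}G_{s^c}^{-1}K^{-1}\beta+G_{s^c}^{-1}X_{s^c}^{T}\varepsilon_{s^c}$ and using $X_s^T(y_s-X_s\widehat\beta_{s^c})=X_s^TX_s(\beta-\widehat\beta_{s^c})+X_s^T\varepsilon_s$, together with Assumption~\ref{assum4} giving $X_s^TX_s/k\to\Sigma$ and $X_{s^c}^TX_{s^c}/(n-k)\to\Sigma$ so that $G_{s^c}^{-1}=\tfrac{r}{n(r-1)}\Sigma^{-1}+o(n^{-1})$, I expect to obtain
\begin{equation*}
X_s^{T}(y_s-X_s\widehat\beta_{s^c})=\frac{\sigma^{2}}{r-1}K^{-1}\beta+\frac{r}{r-1}\bigl(X_s^{T}\varepsilon_s-r^{-1}X^{T}\varepsilon\bigr)+o_p(1),
\end{equation*}
where the identity $X^{T}\varepsilon=X_s^{T}\varepsilon_s+X_{s^c}^{T}\varepsilon_{s^c}$ is the key algebraic step that collapses the two noise pieces into the deviation $X_s^T\varepsilon_s-r^{-1}X^T\varepsilon$. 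Substituting back and plugging in $k=n/r$ with $\widehat\beta_{s^c}\to\beta$ gives the claimed asymptotic expansion after multiplication by $n^{3/2}$: the deterministic piece contributes $O(n^{3/2}/n^{2})=o(1)$, while the noise piece contributes the announced $O_p(1)$ term.

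Finally, to obtain the negative conclusion I would show that the random matrix $\frac{1}{\sqrt n}(X_s^{T}\varepsilon_s-r^{-1}X^{T}\varepsilon)=\frac{r-1}{r}\cdot\frac{1}{\sqrt n}X_s^{T}\varepsilon_s-\frac{1}{r}\cdot\frac{1}{\sqrt n}X_{s^c}^{T}\varepsilon_{s^c}$ does not vanish in probability. A direct variance computation under Assumption~\ref{ass3} gives
\begin{equation*}
\operatorname{Var}\!\left(\tfrac{1}{\sqrt n}(X_s^{T}\varepsilon_s-r^{-1}X^{T}\varepsilon)\right)=\sigma^{2}\!\left[\tfrac{(r-1)^{2}}{r^{2}}\tfrac{X_s^{T}X_s}{n}+\tfrac{1}{r^{2}}\tfrac{X_{s^c}^{T}X_{s^c}}{n}\right]\longrightarrow\frac{\sigma^{2}(r-1)}{r^{2}}\Sigma,
\end{equation*}
which is bounded away from zero. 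Hence $n^{3/2}\partial\mathscr{C}_{\rm hocv}/\partial K$ converges in distribution to a genuinely random matrix rather than to the deterministic limit $\partial W(\eta)/\partial K$ that governs $\psi^*$; consequently, the roots $\widehat\psi_n^{\rm hocv}$ of the first-order condition cannot converge in probability to $\psi^*$, and $\widehat\eta_{\rm hocv}$ fails to be a.o.\ to the first order in the sense of Definition~\ref{defn2}. The main obstacle I anticipate is bookkeeping: carefully tracking which remainder terms are $o_p(1)$ after multiplication by $n^{3/2}$ (in particular controlling the $O_p(n^{-1/2})$ noise contribution inside $\widehat\beta_{s^c}$ when it multiplies terms that are themselves $O_p(n^{-1/2})$) so that the stated leading expansion is genuinely sharp, and verifying that Assumption~\ref{assum4} is enough (rather than the full Assumption~\ref{assum1}.2) to obtain the required convergence of $G_{s^c}^{-1}$ uniformly on compact subsets of $\Omega$.
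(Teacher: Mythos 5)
Your proposal is correct, and it reaches the theorem by a genuinely different route from the paper. The paper never differentiates the hold-out criterion in its ``native'' variables: it first rewrites ${\rm APE}_s=\frac1k\|Z_s^{-1}(y_s-X_s\widehat\beta)\|^2$ via identity \dref{iden1}, invokes the two-term derivative formula \dref{iden2} of Lemma \ref{lem3} (expressed through the full-sample $\widehat\beta$, $G$ and $Z_s$), and then uses \dref{d1}--\dref{d2} to replace $X_s^TZ_s^{-1}$ and $X_s^TZ_s^{-2}$ by $\frac{r}{r-1}X_s^T$ and $\frac{r^2}{(r-1)^2}X_s^T$ before expanding $X_s^T(y_s-X_s\widehat\beta)$ via \dref{id9}. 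You instead differentiate $\frac1k\|y_s-X_s\widehat\beta_{s^c}\|^2$ directly through $G_{s^c}^{-1}$, which yields the single rank-one expression $-\frac{2\sigma^2}{k}K^{-1}G_{s^c}^{-1}X_s^T(y_s-X_s\widehat\beta_{s^c})\widehat\beta_{s^c}^TK^{-1}$; this is algebraically identical to the paper's two-term formula (the two terms collapse using $G^{-1}X_s^TZ_s^{-1}=G_{s^c}^{-1}X_s^T$ and $(y_s-X_s\widehat\beta)^TX_sG_{s^c}^{-1}=(\widehat\beta-\widehat\beta_{s^c})^T$), and for HOCV --- where there is no sum over $s$ and hence no need for the full-sample reparametrization --- it is cleaner and makes the single $O_p(n^{-1/2})$ stochastic driver visible immediately. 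Your expansion of $X_s^T(y_s-X_s\widehat\beta_{s^c})$ and the resulting constant $\frac{r^3}{(r-1)^2}$ check out against the paper's \dref{a35} and \dref{holim}, and your explicit variance limit $\frac{\sigma^2(r-1)}{r^2}\Sigma$ for $\frac{1}{\sqrt n}(X_s^T\varepsilon_s-r^{-1}X^T\varepsilon)$ is consistent with (and more explicit than) the paper's covariance computation; it is a welcome justification of the ``not a.o.'' conclusion, which the paper states rather tersely. One small correction: the remainder in your display for $X_s^T(y_s-X_s\widehat\beta_{s^c})$ should be $o_p(\sqrt n)$ rather than $o_p(1)$, since $\bigl(X_s^TX_sG_{s^c}^{-1}-\tfrac{1}{r-1}I_p\bigr)X_{s^c}^T\varepsilon_{s^c}=o(1)\,O_p(\sqrt n)$ absent a rate in Assumptions \ref{assum1}.2 and \ref{assum4}; this is exactly the bookkeeping issue you flagged, and it is harmless because the whole bracket is subsequently multiplied by $n^{3/2}/(k)\cdot G_{s^c}^{-1}=O(n^{-1/2})$, so the final $o_p(1)$ in the theorem survives.
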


\begin{rem} When $\{x_i,1\leq i \leq n\}$ are random, the results derived in this section  still hold if 1) $\{x_i,1\leq i \leq n\}$ are uncorrelated to $\varepsilon$; 2) $X^TX/n$ converges to a positive definite matrix $\Sigma$ in probability as $n\xra{}\infty$. 
\end{rem}

\subsection{Asymptotic Optimality of CV based Hyperparameter Estimators for the ridge regression estimator \dref{rr}}

In this section, we show that for the ridge regression estimator \eqref{rr}, all CV but not HOCV based hyper-parameter estimators studied above can be a.o. under some mild assumptions. 
\begin{thm}
	\label{thm6}
	Consider the ridge regression estimator \dref{rr}.
	Suppose that Assumptions \ref{assum1}-\ref{ass2} hold.
	Thus, there hold that
	\begin{enumerate}[1)]
		\item $\widehat{\eta}_{\rm gcv}$ is a.o.;
		\item $\widehat{\eta}_{\rm loocv}$ is a.o. if in addition Assumption \ref{ass3} holds;
		
		\item 
		$\widehat{\eta}_{\rm lkocv} $ is a.o.  when $\lambda=0$ if  in addition  Assumption \ref{ass3} holds;
		
		\item 
		$\widehat{\eta}_{\rm lkocv} $ is a.o.  when $0<\lambda<1$ if  in addition  Assumptions \ref{ass3} and \ref{assum4} hold;
		
		\item 	  $\widehat{\eta}_{\rm rfcv} $ is a.o.  when $r \xra{}\infty$ if  in addition  Assumptions \ref{ass3} and \ref{assum4} hold;.
		
		\item 	  $\widehat{\eta}_{\rm rfcv} $ is in general not a.o.  when $r$ is fixed even if  in addition  Assumptions \ref{ass3} and \ref{assum4} hold;
		
		\item 	  $\widehat{\eta}_{\rm hocv} $ is in general not a.o.  when $r$ is fixed even if  in addition  Assumptions \ref{ass3} and \ref{assum4} hold.
		
	\end{enumerate}

\end{thm}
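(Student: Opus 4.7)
The plan is to reduce the theorem to the already-stated Theorems \ref{thm1}--\ref{thm5} by exploiting the ridge structure, in particular Proposition \ref{prop3}, which says that for \eqref{rr} the sets $\eta^*$ and $\psi^*$ coincide and reduce to the single interior point $\eta^* = \sigma^2\mathrm{Tr}(\Sigma^{-1})/(\beta^T\Sigma^{-1}\beta)$. Thus Definitions \ref{defn1} and \ref{defn2} target the same limit, and any gap between ``a.o.'' and ``a.o.\ to the first order'' boils down to distinguishing the global argmin from the rest of the critical set.

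Item 1 is immediate from Theorem \ref{thm1}, which already establishes Definition-\ref{defn1}-style a.o.\ of $\widehat\eta_{\rm gcv}$ for the general RLS estimator. For items 2--5 I would substitute $K(\eta)=I_p/\eta$ and $\partial K/\partial\eta=-I_p/\eta^2$ into the expansions of $\partial\mathscr{C}_n/\partial K$ furnished by Theorems \ref{thm2}, \ref{thm3}, and \ref{thm4}.1. After taking the trace and rescaling by $n^2$, the first-order condition collapses to a single scalar equation whose deterministic leading part is affine in $\eta$ with non-zero slope (proportional to $\mathrm{Tr}(\Sigma^{-1})-(\eta/\sigma^2)\beta^T\Sigma^{-1}\beta$) and whose stochastic remainder is $o_p(1)$ uniformly on compacta. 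A standard uniform-perturbation argument for monotone scalar equations then shows that, with probability tending to one, this equation has a unique root in any fixed compact neighborhood of $\eta^*$, and that root coincides with the global minimizer of $\mathscr{C}_n$ on that neighborhood; combined with the strict minimum of $W(\eta)$ at the interior point $\eta^*$ and the coercivity of the (rescaled) criteria near the boundary of $\Omega$, one obtains $\widehat\eta_n\to\eta^*$ in probability.

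For items 6--7 the same substitution is carried out, but now Theorems \ref{thm4}.2 and \ref{thm5} produce an additional stochastic term $\Delta_n$ in the first-order condition, inherited from $S'_n$ (RFCV with fixed $r$) or from $n^{-1/2}(X_s^T\varepsilon_s-r^{-1}X^T\varepsilon)$ (HOCV). The remark following Theorem \ref{thm4} exhibits designs for which $\mathrm{Var}(S'_n)$ is bounded away from zero, and a CLT-based estimate handles the HOCV case analogously. Solving the scalar perturbed first-order condition explicitly yields a closed form $\widehat\eta_n=\eta^*+c\Delta_n+o_p(1)$ with a nonzero constant $c$, so the stochastic shift does not vanish in probability and $\widehat\eta_n\not\to\eta^*$.

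The hard part is precisely the step in the middle paragraph, namely upgrading ``a.o.\ to the first order'' to genuine a.o.\ in the sense of Definition \ref{defn1}: a priori the stationary set $\widehat\psi_n$ may contain spurious roots far from $\eta^*$ and one must argue that the global argmin is not one of them. Scalarity of $\eta$ and the asymptotic affineness (with non-zero slope) of the rescaled first-order condition in the ridge case are what save us; without them, one would need a genuine second-order or convexity analysis of the CV criteria on the whole hyper-parameter set, which is considerably more delicate.
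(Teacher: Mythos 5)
Your core reduction is exactly the one the paper uses: invoke Proposition \ref{prop3} to identify $\eta^*$ and $\psi^*$ as the same single interior point $\sigma^2{\rm Tr}(\Sigma^{-1})/(\beta^T\Sigma^{-1}\beta)$ for the ridge estimator \dref{rr}, and then transfer the conclusions of Theorems \ref{thm1}--\ref{thm5}. The difference is that the paper's entire proof is that one observation --- it simply declares the result ``straightforward'' once the two limit sets coincide --- whereas you go on to confront the step the paper leaves implicit, namely that convergence of the stationary set $\widehat\psi_n$ in the weak sense $\inf_{a\in\widehat\psi_n,\,b\in\psi^*}\|a-b\|\to 0$ of Definition \ref{defn2} does not by itself force the global argmin $\widehat\eta_n$ of $\mathscr{C}_n$ to converge to $\eta^*$, since the argmin is only one (a priori unidentified) element of $\widehat\psi_n$. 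Your patch --- specialize $K=\sigma^2 I_p/\eta$, observe that the rescaled scalar first-order condition has an asymptotically affine deterministic part with non-zero slope vanishing exactly at $\eta^*$, deduce a unique root near $\eta^*$ with probability tending to one, and rule out boundary minimizers by coercivity --- is a sensible and essentially correct way to close that gap, and your treatment of items 6)--7) via the explicit perturbation $\widehat\eta_n=\eta^*+c\Delta_n+o_p(1)$ matches the mechanism behind the remark following Theorem \ref{thm4}. What your version buys is an actual justification of the passage from ``a.o.\ to the first order'' to ``a.o.''; what it costs is that the two supporting claims you lean on --- uniform $o_p(1)$ control of the remainder on the whole of $\Omega$ rather than only on the compacta containing $\psi^*$ used in Theorems \ref{thm2}--\ref{thm5}, and coercivity of the CV criteria as $\eta\to 0$ or $\eta\to\infty$ --- are asserted rather than verified, and they are the only places where real work remains. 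Since the paper itself supplies neither, your proposal is, if anything, more complete than the published argument.
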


	\section{Numerical Simulation}
	
	In this section, we illustrate our theoretical results
	by  Monte Carlo numerical simulations.

	\subsection{Data Bank}
	
	For the linear regression model \eqref{lrm}, 
	we generate three data banks, corresponding to three different ways to generate the regression matrix $X$, which are described as follows:
	\begin{enumerate}
		\item{D1}: well-conditioned $X$, $X^TX$ is approximately equal to $nI_p$;
		
		\item{D2}: ill-conditioned $X$, the condition number of $X^TX$ is around $10^{7}$;
		
		\item{D3}: unbounded $
		X=\begin{pmatrix}
		\sqrt{n}I_p\\
		0
		\end{pmatrix}
		$ with respective to $n$,
		which
		satisfies Assumption \ref{assum1} ($X^TX=nI_p$) but contradicts Assumption \ref{assum4}.
	\end{enumerate}
	
	Each data bank contains 1000 data sets with $n=300$ and 1000 data sets with $n=7800$. To generate each data set, the elements of $\beta\in\mathbb R^{200}$ are chosen to decay exponentially, i.e., $\beta_i=O(\mu^i)$ with $0\leq \mu <1$ for $1\leq i \leq p$, which has wide applications in the area of control and systems, e.g.. \cite{Ljung1999}. The noise $\varepsilon$ is further assumed to be Gaussian with the variance such that the signal-to-noise ratio  (SNR), i.e., the ratio between the variance of the signal $X\beta$ and the noise $\varepsilon$,  is uniformly distributed over $[1,10]$.

	\subsection{Simulation Setup}

	The kernel matrix $K(\eta)$ is parameterized by the Tuned/Correlated (TC) kernel defined as
	$K_{ij}(\eta) = c\mu^{\max(i,j)},~
	\eta = [c,\mu]\in\Omega=\{c\geq 0,0\leq \mu \leq1\}$,
	which can well characterize the exponential decay of the $\beta$ \cite{Chen2012}.
	We use the risk \dref{ophy_risk}, GCV \dref{gcv}, LOOCV \dref{loocv}, 2-fold, 5-fold, and 100-fold (for $N=300$)/3900-fold (for $N=7800$) \dref{rfcv}, and 2-hocv and 5-hocv \dref{hocv} estimators to tune the hyperparameter $\eta$ of the TC kernel $K(\eta)$, then we calculate their corresponding RLS $\widehat{\beta}(K(\widehat\eta))$, and finally we calculate the measure of fit
	\begin{align*} \mbox{\rm Fit} =100\times \left( 1 -
	\frac{\|\widehat{\beta} -
		\beta \|}{\|\beta -\bar{\beta}\|}\right),~~\bar{\beta}=\frac{1}{200}\sum_{i=1}^{200}\beta_i
	\end{align*}
	to evaluate how good these estimators are.
	Here the risk estimator \dref{ophy_risk} is impractical but provides a reference for other estimators and the LKOCV with $k>1$ is not illustrated due to the high computational cost.

	\subsection{Simulation results}
	The boxplots of the fits for the three data banks D1, D2, and D3 are shown in Figs.
	\ref{f1}--\ref{f3}, respectively, where the numbers at the bottom of these figures indicate for each hyperparameter estimator the number of the fits lower than the lowest value of the y-axis out of the 1000 data sets,
	and the  corresponding average fits are given in Table \ref{tab1}.
	
	\begin{figure*}[!h]
		\centering
		\hspace{-20mm}
		\mbox{\includegraphics[scale=0.4]{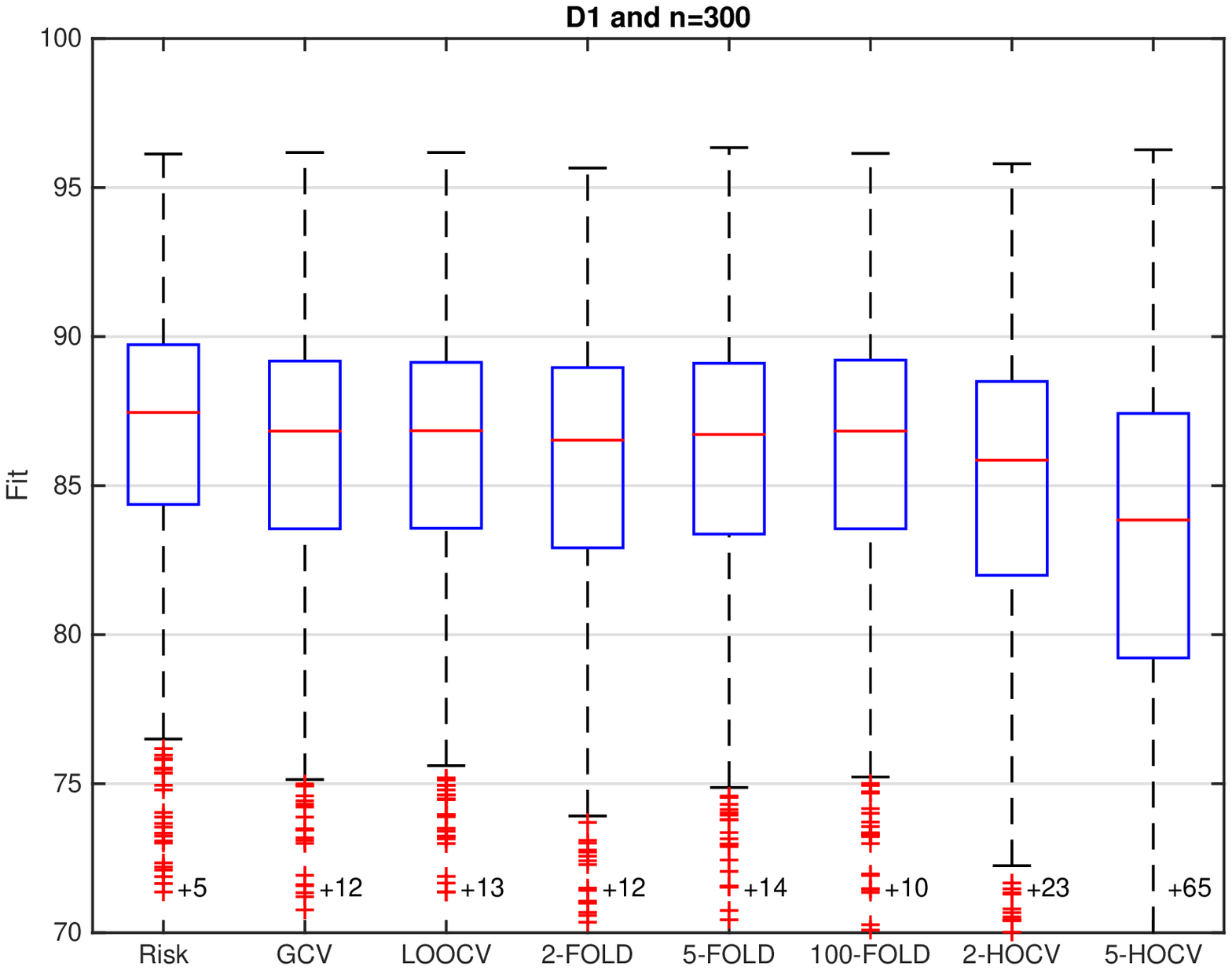}
			\hspace{-8mm}
			\includegraphics[scale=0.4]{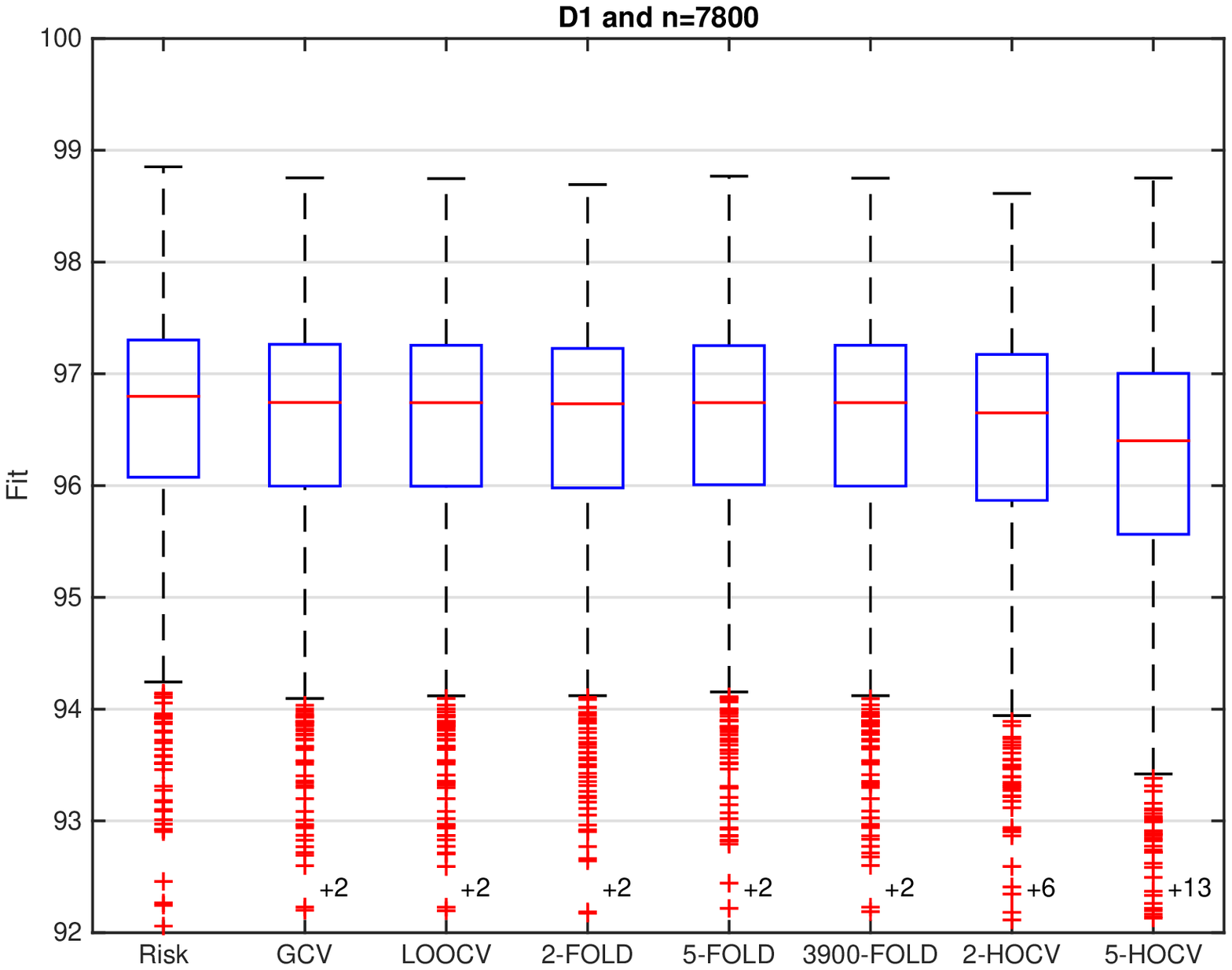}}
		\vspace{-3ex}
		\caption{Boxplot of the 1000 fits for the data bank D1: $N=300$ (left) and $N=7800$ (right).}
		\label{f1}
	\end{figure*}
	\begin{figure*}[!h]
		\centering
		\hspace{-20mm}
		\mbox{\includegraphics[scale=0.4]{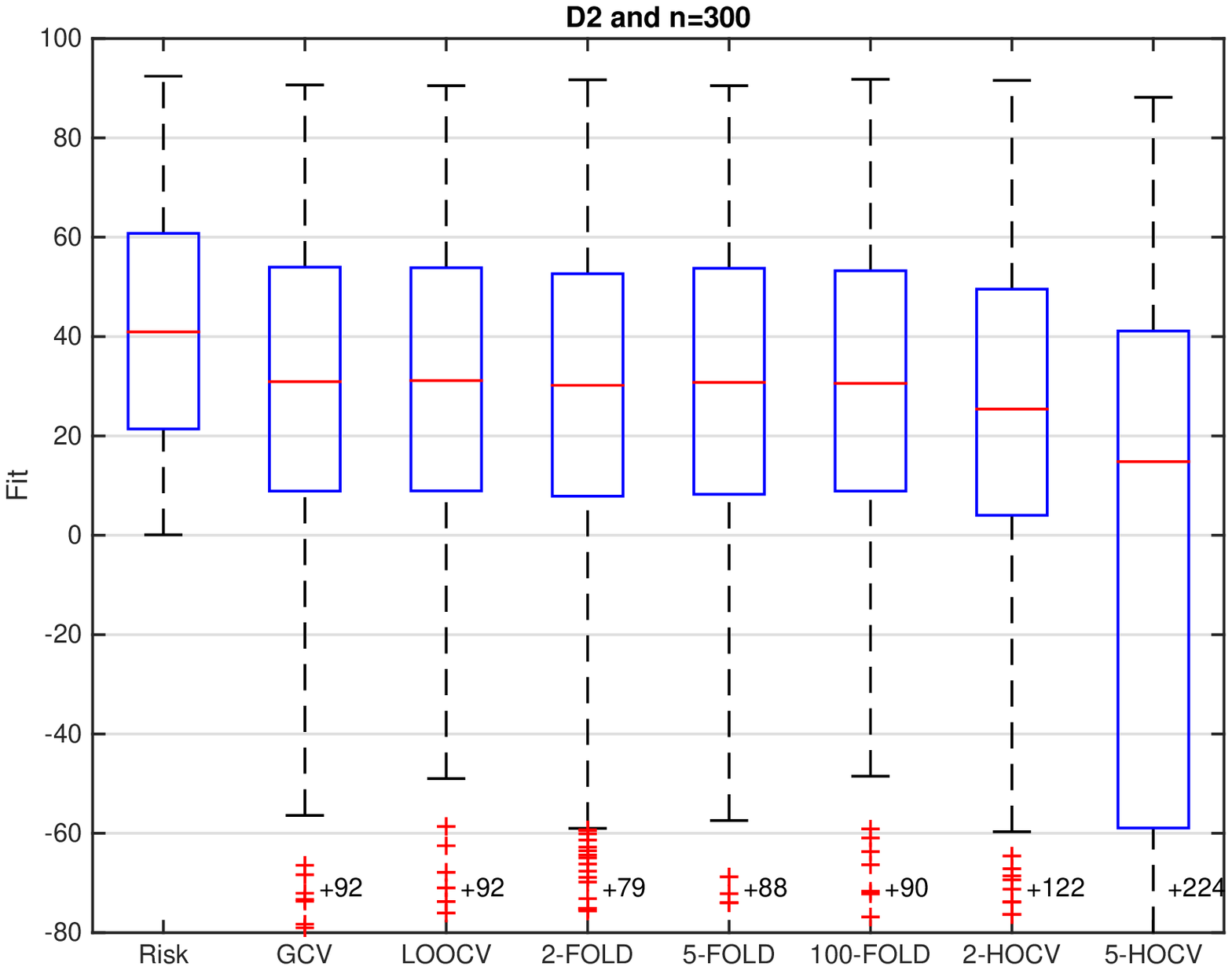}
			\hspace{-8mm}
			\includegraphics[scale=0.4]{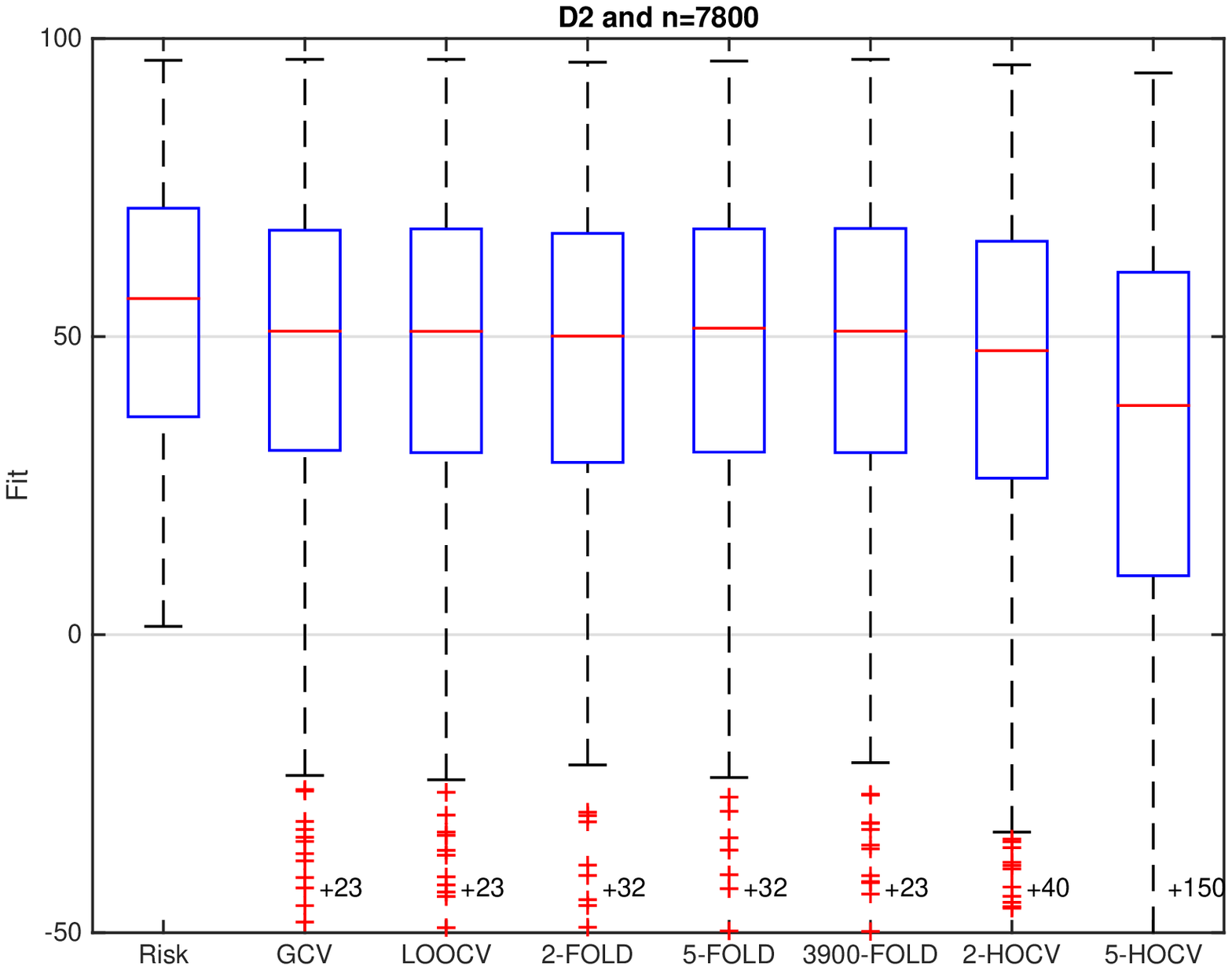}}
		\vspace{-3ex}
		\caption{Boxplot of the 1000 fits for the data bank D2: $N=300$ (left) and $N=7800$ (right).}
		\label{f2}
	\end{figure*}
	\begin{figure*}[!h]
		\centering
		\hspace{-20mm}
		\mbox{\includegraphics[scale=0.4]{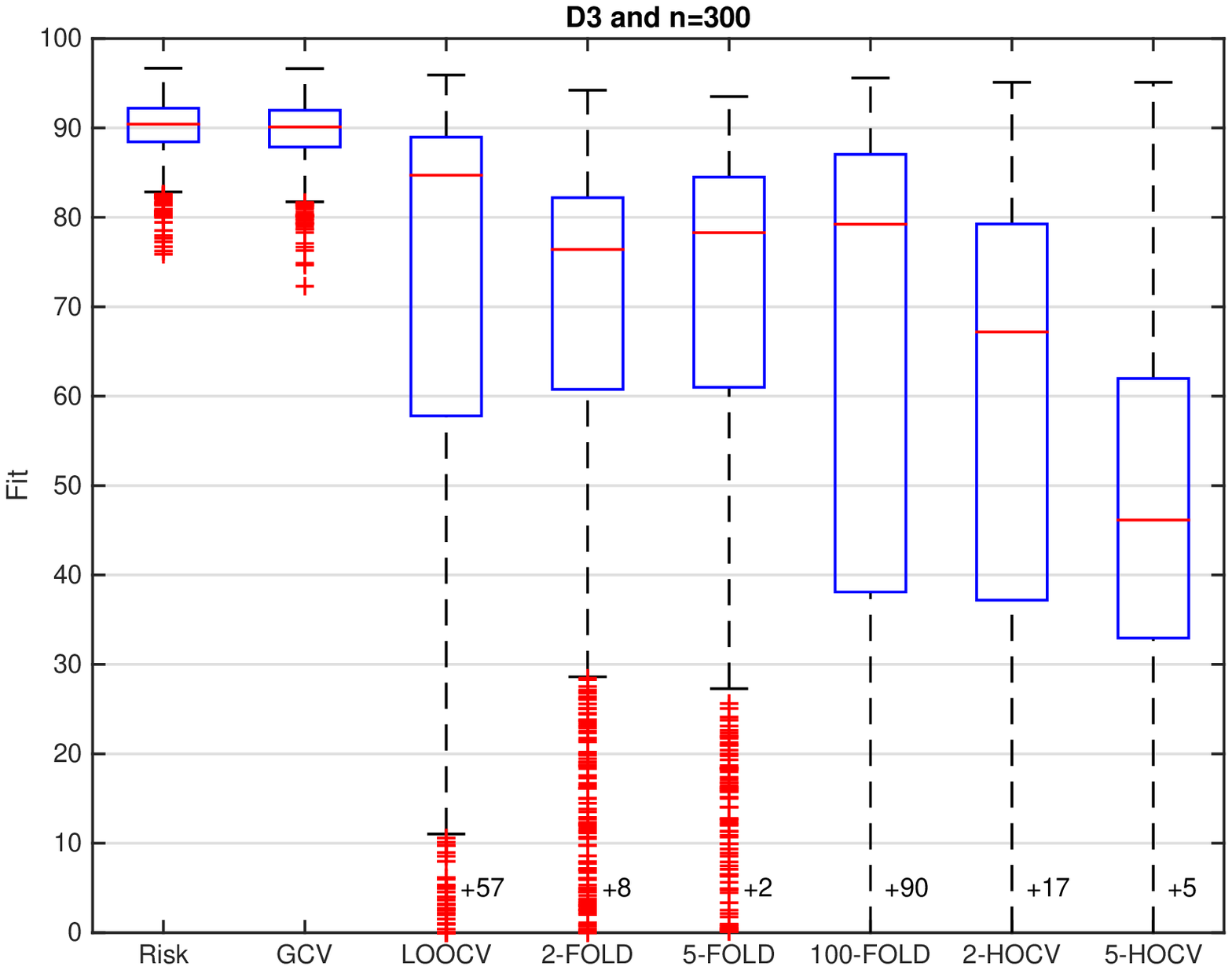}
			\hspace{-8mm}
			\includegraphics[scale=0.4]{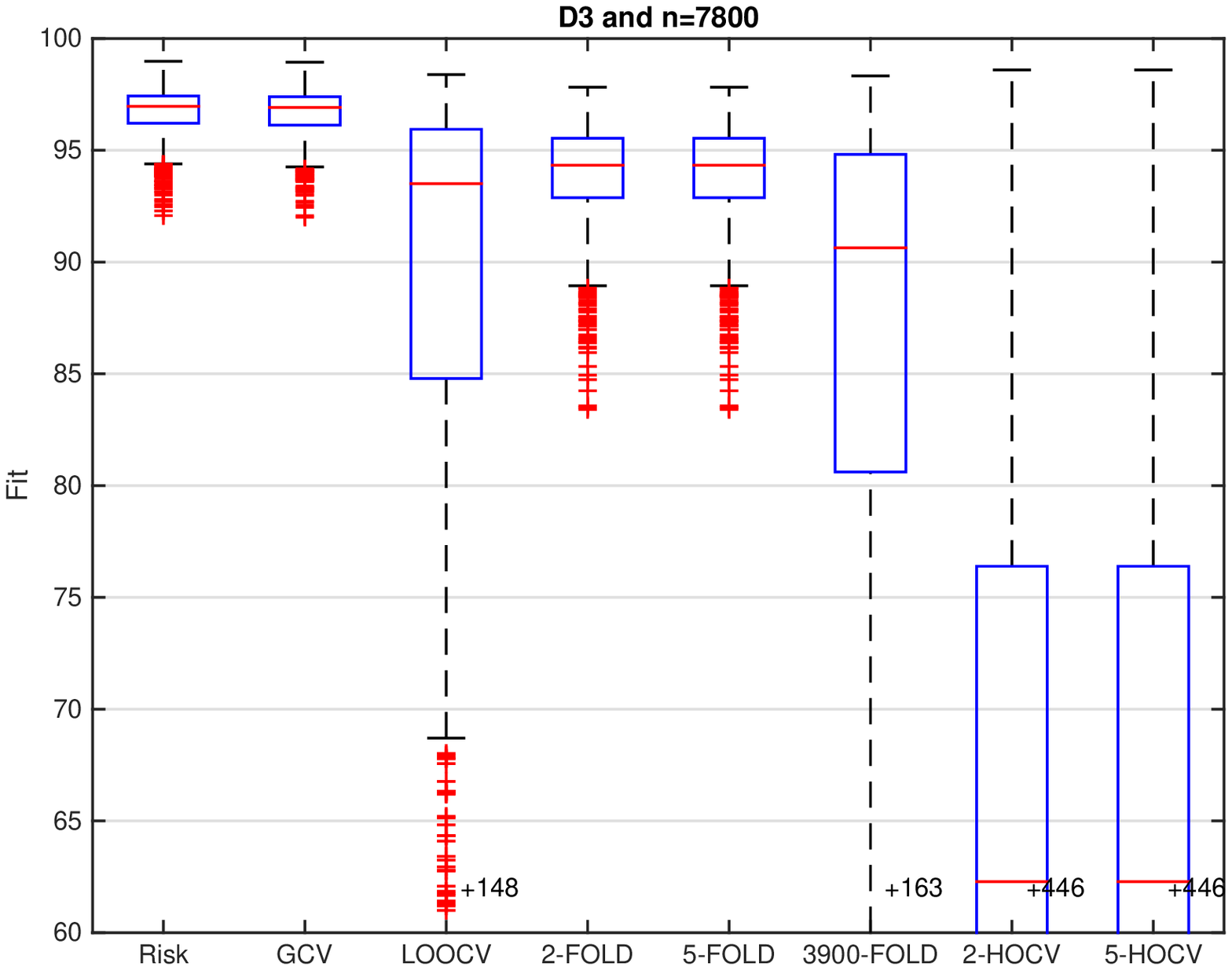}}
		\vspace{-3ex}
		\caption{Boxplot of the 1000 fits for the data bank D3: $N=300$ (left) and $N=7800$ (right).}
		\label{f3}
	\end{figure*}
	\begin{table}[!ht]
		\centering
		{\footnotesize 
		\caption{Average fits for the three data banks D1, D2, and D3.}
		\vspace{1.5ex}
		\begin{tabu} to 1
			\textwidth{X[0.8,l]X[0.6,r]X[0.6,r]X[0.6,r]X[0.6,r]X[0.6,r]X[1,r]X[0.7,r]X[0.7,r]}
			\hline
			&   \mbox{\scriptsize Risk}    &   \mbox{\scriptsize GCV}  &    \mbox{\scriptsize LOOCV}  &   \mbox{\scriptsize 2-FOLD}  &    \mbox{\scriptsize 5-FOLD}   &   \mbox{\scriptsize 100-FOLD} \mbox{\scriptsize /3900-FOLD}     &   \mbox{\scriptsize 2-HOCV} &   \mbox{\scriptsize 5-HOCV}  \\  \hline
			&&&&\mbox{The data bank D1} \\ \hline
			\mbox{$n\!=\!300$}    & 86.69 	&	85.93 	&	85.93 	&	85.56 	&	85.71 	&	85.92 	&	84.64 	&	82.28 	\\  
			\mbox{$n\!=\!7800$}    &   96.56 	&	96.49 	&	96.49 	&	96.46 	&	96.49 	&	96.49 	&	96.39 	&	96.11    \\  \hline
			
			&&&&\mbox{The data bank D2} \\ \hline
			\mbox{$n\!=\!300$}    &   41.63 	&	-11.42 	&	-10.58 	&	-13.17 	&	-14.79 	&	-9.56 	&	-37.53 	&	\mbox{-225.78}    \\
			\mbox{$n\!=\!7800$}    &  53.58 	&	43.68 	&	43.61 	&	39.69 	&	41.63 	&	43.56 	&	37.22 	&	-16.02      \\  \hline
			
			&&&&\mbox{The data bank D3} \\ \hline
			\mbox{$n\!=\!300$}     &   89.94 	&	89.53 	&	69.34 	&	66.94 	&	72.80 	&	62.19 	&	56.93 	&	47.30       \\
			\mbox{$n\!=\!7800$}    &   96.70 	&	96.64 	&	81.40 	&	93.89 	&	93.89 	&	78.55 	&	63.25 	&	63.25      \\  \hline
		\end{tabu}}
		\label{tab1}
	\end{table}
	\subsection{Findings}
	
	Firstly, for all tested cases and in terms of average accuracy and
	robustness, the risk estimator  (not implementable
	in practice) performs best.
	
	Secondly, for the data banks D1 and D2 (well/ill-conditioned $X^TX$), excluding the risk estimator, the GCV and LOOCV estimators perform almost identically and are better than other estimators;
	the RFCV estimators are gradually improved as the number $r$ increases;
	the HOCV estimators perform worst and the performance deteriorates as $r$ increases.

	Thirdly, for the data bank  D3 (unbounded regressors), the GCV estimator is very close to the risk estimator but the LOOCV, RFCV, and HOCV are significantly 
	worse. This means that the requirement on the boundedness imposed in the second term of Assumption \ref{ass3} is necessary for the LOOCV and RFCV estimators.
	The simulation results on the 2-fold, 5-fold,  2-hocv, and 5-hocv  for $n=7800$ are meaningless  since the cost functions are constant and the estimated hyperparameters are exactly the preset initial values of optimization algorithms.

	The numerical simulations verify our theoretical results:
	\begin{enumerate}[1)]
		\item the GCV estimator is a.o. without the boundedness assumption on the regressors;
		\item the LOOCV estimator is a.o. by further imposing the boundedness assumption on the regressors;
		\item the RFCV estimator is a.o.  as $r$ increases to infinity by further imposing the boundedness assumption on the regressors, and is just a little bit worse than the GCV and LOOCV for fixed $r$ and well conditioned $X^TX$;
		\item the HOCV estimator is in general not a.o..
	\end{enumerate}

\section{Conclusions}
\label{sec7}

Since the CV methods are widely used for the hyper-parameter estimation for the regularized linear least squares regression problems, it is interesting and important to know the asymptotic optimality of CV based hyper-parameter estimators. In this paper, we focused on this issue and show that for regularized linear least squares regression problems with a fixed number of regression parameters, the GCV, LKOCV and RFCV estimators are asymptotically optimal under some mild assumptions, but the HOCV estimator is in general not. Our theoretical results provide a reference for practitioners who are using the CV methods and interested in the asymptotic optimality of the CV based hyper-parameter estimators for the regularized linear least squares regression problems.  

\section{Appendix}  
Appendix A contains the proof of the results in the paper and Appendix B 
contains the technical lemmas for the proofs of our theoretical results in Appendix A.
\begin{appendix}
	\section{Proofs of results}
	
	\renewcommand{\thesection}{A}
	\setcounter{rem}{0}
	\renewcommand{\therem}{A\arabic{rem}}
	\setcounter{equation}{0}
	\renewcommand{\theequation}{A\arabic{equation}}
	\subsection{Proof of Proposition \ref{thm21}}
	We first consider the proof of the one based on inefficiency, i.e.,  $L_n(\widehat{\eta}_n)/\inf\limits_{\eta\geq 0} L_n(\eta)\xra{}1$.
	It follows that
	\begin{align}
	\nonumber
	L_n(\eta)&=	n^{-1}\eta^2\beta^T G(\eta)^{-1}X^TX G(\eta)^{-1} \beta
	\\
	\nonumber
	&\hspace{4mm}
	+
	n^{-1}\varepsilon^T X G(\eta)^{-1} X^TX  G(\eta)^{-1} X^T \varepsilon
	\\
	\nonumber
	&\hspace{4mm}
	-2 n^{-1}\eta \beta^TG(\eta)^{-1}  X^TX G(\eta)^{-1} X^T \varepsilon\\
	\nonumber
	&=L'_n(\eta)+L''_n(\eta)+L'''_n(\eta)
	\end{align}
	where $G(\eta) =  X^TX + \eta I_p$ and $L'_n(\eta)$, $L''_n(\eta)$, $L'''_n(\eta)$ are used to denote the three terms in the first equation in order.
	Define  $\eta_n^\natural=\argmin_{\eta\geq 0} L_n(\eta)$ and
	let us discuss the possible minima of $L_n(\eta)$ by computing the different values of $\eta_n^\natural$.
	
	
	{\it Case 1:} $\eta_n^\natural = 0$.
	
	In this case, $G(\eta_n^\natural ) = X^TX$ and so $L_n(\eta_n^\natural)=0+n^{-1}\varepsilon^T X  (X^TX )^{-1} X^T \varepsilon+0=n^{-1}\varepsilon^T X  (X^TX )^{-1} X^T \varepsilon=O_p(1/n)>0$ since $\varepsilon^T X  (X^TX )^{-1} X^T \varepsilon=O_p(1)$ by Lemma \ref{lem2}.
	
	{\it Case 2:} $\eta_n^\natural = \infty$.
	
	It follows that $G(\eta_n^\natural )^{-1} = (X^TX)^{-1} - (X^TX)^{-1}  (I_p/\eta_n^\natural  + (X^TX)^{-1})^{-1} (X^TX)^{-1}=0$
	and $\eta_n^\natural  G(\eta_n^\natural )^{-1} = I_p$.
	Thus one derives that $L_n(\eta_n^\natural)=n^{-1}\beta^T X^TX  \beta +0+0=n^{-1}\beta^T X^TX  \beta = O(1)>0$.
	
	{\it Case 3:} $0< \eta_n^\natural < \infty$.
	
	One has $\eta_n^\natural = O_p(1)$, $G(\eta_n^\natural )^{-1}=O_p(1/n)$, $X^TX =O(n)$, and $\beta = O(1)$.
	As a result, $L'_n(\eta_n^\natural)=O(1/n^2)$, $L''_n(\eta_n^\natural)=n^{-1}\varepsilon^T X G(\eta_n^\natural)^{-1} X^TX  G(\eta_n^\natural)^{-1} X^T\varepsilon=O_p(1/n)$, and $L'''_n(\eta_n^\natural)=O_p(1/n^{3/2})$ since $X^T\varepsilon=O_p(\sqrt{n})$ and $n^{-1}\varepsilon^T X  X^T \varepsilon=O_p(1)$ by Lemma \ref{lem2}.
	
	Comparing the possible values of $L_n(\eta)$ for the above three cases, one finds that the value $\min_{\eta\geq 0}L_n(\eta)$ is either
	$n^{-1}\varepsilon^T X  (X^TX )^{-1} X^T \varepsilon$ or asymptotically $n^{-1}\varepsilon^T X G(\eta_n^\natural)^{-1} X^TX  G(\eta_n^\natural)^{-1} X^T\varepsilon$ for large $n$.
	Note that $G(\eta_n^\natural )^{-1} = (X^TX)^{-1}+\eta_n^\natural  G(\eta_n^\natural )^{-1}(X^TX)^{-1}=(X^TX)^{-1}+ O_p(1/n)(X^TX)^{-1}$.
	This means that
	$$n^{-1}\varepsilon^T X G(\eta_n^\natural)^{-1} X^TX  G(\eta_n^\natural)^{-1} X^T=n^{-1}\varepsilon^T X  (X^TX )^{-1} X^T \varepsilon+O_p(1/n^2).$$
	Therefore, it can be concluded that 
	\begin{align*}
	\min_{\eta\geq 0}L_n(\eta)
	=\left\{
	\begin{array}{ll}
	n^{-1}\varepsilon^T X  (X^TX )^{-1} X^T \varepsilon,&~if~\eta_n^\natural =0\\
	n^{-1}\varepsilon^T X  (X^TX )^{-1} X^T \varepsilon+O_p(1/n^2),&~if~0< \eta_n^\natural < \infty.
	\end{array}
	\right.
	\end{align*}

	When $\widehat{\eta}_n$ is bounded in probability, i.e., $\widehat{\eta}_n=O_p(1)$, 
	using the same analysis for Case 3 ($0< \eta_n^\natural < \infty$) above can similarly derive
	$
	L_n(\widehat{\eta}_n)=
	n^{-1}\varepsilon^T X  (X^TX )^{-1} X^T \varepsilon+O_p(1/n^2).
	$
	Thus it proves that
	$L_n(\widehat{\eta}_n)/\inf\limits_{\eta\geq 0} L_n(\eta)\xra{}1$
	in probability as $n\xra{}\infty$.
	
	
	Then we  consider the proof of the one based on the expectation inefficiency, i.e.,  $\mathcal{I}_n=E L_n(\widehat{\eta}_n)/\inf\limits_{\eta\geq 0} EL_n(\eta) \xra{}1$. 
	The proof is similar  to the previous case. It is straightforward that
	\begin{align*}
	R_n(\eta) = EL_n(\eta)
	&=n^{-1}\eta^2\beta^TG(\eta)^{-1}X^TX G(\eta)^{-1}\beta\\
	\nonumber
	&\hspace{5mm}
	+n^{-1}\sigma^2 {\rm Tr}\big(G(\eta)^{-1}X^TX G(\eta)^{-1}X^TX \big) \\
	&=R'_n(\eta)  + R''_n(\eta)
	\end{align*}
	where $G(\eta) = X^T   X   + \eta I_p$ and ${\rm Tr}(\cdot)$ denotes the trace of a matrix.
	Let us compute the minimum of $L_n(\eta)$ by considering the possible values of $\eta_n^*$.
	
	
	{\it Case 1:} $\eta_n^* = 0$.
	
	In this case, $G(\eta_n^* ) = X^TX$ and so $R_n(\eta_n^*)=0+n^{-1}p\sigma^2$.
	
	{\it Case 2:} $\eta_n^* = \infty$.
	
	It follows that $G(\eta_n^* )^{-1} =0$
	and $\eta_n^*  G(\eta_n^* )^{-1} = I_p$.
	Thus one derives that $R_n(\eta_n^*)=n^{-1}\beta^T X^TX  \beta +0+0=n^{-1}\beta^T X^TX  \beta = O(1)>0$.
	
	{\it Case 3:} $0< \eta_n^* < \infty$.
	
	One has $\eta_n^* = O(1)$, $G(\eta_n^* )^{-1}=O(1/n)$, $X^TX =O(n)$, and $\beta = O(1)$.
	As a result, $R'_n(\eta_n^*)=O(1/n^2)$ and
	\begin{align}
	\!R''_n(\eta_n^*)\!=\underbrace{n^{-1}\sigma^2p}_{O\left(\frac1n\right)}
	-\underbrace{n^{-1}2\sigma^2\eta_n^* {\rm Tr}\big(G(\eta_n^*)^{-1} \big)}_{O\left(\frac1{n^{2}}\right)}
	+\underbrace{n^{-1}\sigma^2(\eta_n^*)^2 {\rm Tr}\big(G(\eta_n^*)^{-2}  \big)}_{O\left(\frac1{n^{3}}\right)}. \label{r2}
	\end{align}
	Therefore, one can  conclude that 
	\begin{align*}
	\min_{\eta\geq 0}L_n(\eta)
	=\left\{
	\begin{array}{ll}
	n^{-1}p\sigma^2,&~if~\eta_n^* =0\\
	n^{-1}p\sigma^2+O(1/n^2),&~if~0< \eta_n^* < \infty.
	\end{array}
	\right.
	\end{align*}
	For the case that  $\widehat{\eta}_n$ is bounded in probability, its risk $R_n(\widehat{\eta}_n)=n^{-1}p\sigma^2+O_p(1/n^2)$  using the same analysis of Case 3.
	Thus it yields that $\mathcal{I}_n=\frac{R_n(\widehat{\eta}_n)}{\inf\limits_{\eta\geq 0} R_n(\eta)} \xra{}1$ as $n\xra{}\infty$.
	
	\subsection{Proof of Proposition \ref{prop3}}
	It follows that
	\begin{align*}
	n^{2}  \left( R_n(\eta) - \sigma^2p/n \right)
	&=\eta^2\beta^TnG(\eta)^{-1}X^TX G(\eta)^{-1}\beta\\
	\nonumber
	&\hspace{5mm}
	-2n\sigma^2\eta {\rm Tr}\big(G(\eta)^{-1} \big)
	+n\sigma^2\eta^2 {\rm Tr}\big(G(\eta)^{-2}  \big).
	\end{align*}
	Clearly, $\eta^*_n=\argmin_{\eta\geq 0} n^{2}  \left( R_n(\eta) - \sigma^2p/n \right)$ since $\sigma^2p/n$ dose not depend on $\eta$.
	For any $0<\eta <\infty$, following from \eqref{lim1},
	one has $nG(\eta)^{-1}\xra{}\Sigma^{-1}$ as $n\xra{}\infty$ and hence
	$n^{2}  \left( R_n(\eta) - \sigma^2p/n \right)\xra{}\beta^T\Sigma^{-1}\beta \eta^2
	-
	2\sigma^2{\rm Tr}\big(\Sigma^{-1} \big)\eta$
	as $n\xra{}\infty$. Since the convergence is also uniformly in a compact set of the point $\sigma^2{\rm Tr}\big(\Sigma^{-1} \big)/(\beta^T\Sigma^{-1}\beta)$,
	we have 
	$
	\eta_n^*\xra{}\eta^*
	=\frac{\sigma^2{\rm Tr}\big(\Sigma^{-1} \big)}{\beta^T\Sigma^{-1}\beta}
	=\argmin_{\eta\geq 0}
	\beta^T\Sigma^{-1}\beta \eta^2$
	$-
	2\sigma^2{\rm Tr}\big(\Sigma^{-1} \big)\eta$ as $n\xra{}\infty$.
	
	On the other hand, note that the set $\psi^*$ defined in \dref{rootset2} is also the single point $\frac{\sigma^2{\rm Tr}\big(\Sigma^{-1} \big)}{\beta^T\Sigma^{-1}\beta}$.
	Similarly, one can prove that  $\psi_n^*\xra{}\psi^*$ as $n\xra{}\infty$.
	Thus, Definition \ref{defn2} is equivalent to Definition \ref{defn1} for the ridge regression estimator \dref{rr}.
	
	\subsection{Proof of Theorem \ref{thm1}}
	
	The proof for \eqref{sigma} can be found in \dref{nid1} of Lemma \ref{lem4}. So we only prove \eqref{lgcv} below.
	
	Using the Taylor expansion $1/(1-x)^2=1+2x+3x^2+O(x^3)
	$ around $x=0$ and noting from the definition of $M$ in \eqref{hat} and Assumption \ref{assum1} that ${\rm Tr}(M)/n=O(1/n)$, one can obtain the Taylor expansion for $\mathscr{C}_{\rm gcv}(K)
	$:
	\begin{align}
	\label{decom}
	\mathscr{C}_{\rm gcv}(K)
	&= \frac{\|y - X\widehat{\beta} \|^2}
	{n\big(1 - {\rm Tr}(M)/n\big)^2}\\
	&=\frac{\|y - X\widehat{\beta} \|^2}{n}
	\Big( 1+\frac{2{\rm Tr}(M)}{n}  +\frac{3({\rm Tr}(M))^2}{n^2} +O\Big(\frac1{n^3}\Big) \Big).\nonumber
	\end{align}
	Now we analyze the limit of each term of the above equation.
	
	Firstly, by using \dref{nid1} and \eqref{lim1} we have as $n\xra{}\infty$
	\begin{align}
	\label{de1}
	n^{2}\Big(\frac{\|y - X\widehat{\beta} \|^2}{n}
	-\widehat{\sigma}^2 \Big)
	&=\sigma^4n \widehat{\beta}_{\rm LS}^T
	K^{-1}G^{-1}X^TXG^{-1}K^{-1}
	\widehat{\beta}_{\rm LS}\\
	\nonumber
	&\xra{}\sigma^4\beta^T K^{-1}\Sigma^{-1}K^{-1}\beta.
	\end{align}
	in probability.
	
	Secondly, by using \dref{nid1} and noting 
	$n({\rm Tr}(M) - p)
	\xra{}-\sigma^2{\rm Tr}\big(\Sigma^{-1}K^{-1}\big)
	$ and $\widehat{\sigma}^2\xra{}\sigma^2,$	
	we have
	\begingroup
	\allowdisplaybreaks
	\begin{align}
	\label{de2}
	n^{2}\Big(&\frac{\|y - X\widehat{\beta} \|^2}{n} \frac{2{\rm Tr}(M)}{n}
	-\widehat{\sigma}^2  \frac{2p}{n}\Big)\\
	\nonumber
	&=n^{2}\Big( \Big(\widehat{\sigma}^2 +O_p\Big(\frac1{n^{2}}\Big) \Big) \frac{2{\rm Tr}(M)}{n}
	-\widehat{\sigma}^2  \frac{2p}{n}\Big)\\
	\nonumber
	&=n^{2}\Big( \frac{2\widehat{\sigma}^2}{n}  \big( {\rm Tr}(M) -p \big)
	+O_p\Big(\frac1{n^{3}}\Big)\Big)\\
	&\xra{}-2\sigma^4{\rm Tr}(\Sigma^{-1}K^{-1})\nonumber
	\end{align}
	\endgroup
	in probability.
	
	Lastly, by similar arguments as the above two steps, we have
	\begin{align}
	\label{de3}
	&\hspace{-5mm}n^{2}\Big(\frac{\|y - X\widehat{\beta} \|^2}{n} \frac{3  \big({\rm Tr}(M)\big)^2}{n^2}
	-\widehat{\sigma}^2  \frac{3p^2}{n^2}\Big)\\
	\nonumber
	&=n^{2}\Big( \big(\widehat{\sigma}^2 +O_p(1/n^{2}) \big)  \frac{3  \big({\rm Tr}(M)\big)^2}{n^2}
	-\widehat{\sigma}^2  \frac{3p^2}{n^2}\Big)\\
	\nonumber
	&=n^{2}\Big( \frac{3\widehat{\sigma}^2}{n^2}  \big(  \big({\rm Tr}(M)\big)^2 -p^2 \big)
	+O_p(1/n^{4})\Big)\\
	\nonumber
	&\xra{}0.
	\end{align}
	Combining \dref{decom}--\dref{de3}  proves
	\begin{align*}
	&n^{2}\big(
	\mathscr{C}_{\rm gcv}(K(\eta)) - \widehat{\sigma}^2(1+2p/n+3p^2/n^2)
	\big)/\sigma^4\\
	&=\frac{n^2}{\sigma^4}\Big(
	\Big(\frac{\|y - X\widehat{\beta} \|^2}{n}
	-\widehat{\sigma}^2 \Big)
	+\Big(\frac{\|y - X\widehat{\beta} \|^2}{n} \frac{2{\rm Tr}(M)}{n}
	-\widehat{\sigma}^2  \frac{2p}{n}\Big)\\
	&\hspace{8mm}+
	\Big(\frac{\|y - X\widehat{\beta} \|^2}{n} \frac{3  \big({\rm Tr}(M)\big)^2}{n^2}
	-\widehat{\sigma}^2  \frac{3p^2}{n^2}\Big)
	+\frac{\|y - X\widehat{\beta} \|^2}{n}
	O\Big(\frac1{n^3}\Big)\Big)\\
	&\xra{}W(\eta)
	\end{align*}
	in probability as $n\xra{}\infty$.
	
	Now, we will show $\widehat{\eta}_{\rm gcv} \xra{}\eta^*$.
	Under Assumption 2, there exists a compact set $\mathscr{X}_1\subset \Omega$ and positive constants $c_5,c_6$
	such that $\eta^*\subset \mathscr{X}_1$ and
	$0<c_5\leq \|K(\eta)\|\leq c_6<\infty$ for all
	$\eta\in \mathscr{X}_1$.
	Define the cost function
	\begin{align*}
	\overline{\mathscr{C}}_{\rm gcv}(K)\eq
	n^2\big(
	\mathscr{C}_{\rm gcv}(K) -\widehat{\sigma}^2 (1+2p/n+3p^2/n^2)
	\big)/\sigma^4.
	\end{align*}
	Note that $\widehat{\sigma}^2 $, $p$, $\sigma^4$, and $n$ do not depend on $\eta$.
	So, one yields
	$
	\widehat{\eta}_{\rm gcv}=\argmin_{\eta \in \Omega}
	\overline{\mathscr{C}}_{\rm gcv}(K(\eta)).
	$
	By Lemma \ref{ct} in appendix, in order to show $\widehat{\eta}_{\rm gcv} \xra{}\eta^*$,
	it suffices to show $\overline{\mathscr{C}}_{\rm gcv}(K(\eta))\xra{}W(\eta)$ in probability and uniformly in $\mathscr{X}_1$.
	Actually, the convergence of \dref{de1}, \dref{de2},  and \dref{de3} is also uniform in $\mathscr{X}_1$ by noting that $0<c_5\leq \|K(\eta)\|\leq c_6<\infty$,
	$\|K(\eta)^{-1}\|\leq 1/c_5$, $\|nG^{-1}\|\leq \|n(X^TX)\|$,
	$\|G^{-1}X^TX\| \leq \|I_p\|$, and $\|\widehat{\beta}_{\rm LS}\|=O_p(1)$ for all
	$\eta\in \mathscr{X}_1$.
	For example, consider the decomposition of \dref{de1}
	\begingroup
	\allowdisplaybreaks
	\begin{align}
	\label{de4}
	&\hspace{-15mm}
	n \widehat{\beta}_{\rm LS}^T
	K^{-1}G^{-1}X^TXG^{-1}K^{-1}
	\widehat{\beta}_{\rm LS}
	-\beta K^{-1}\Sigma^{-1}K^{-1}\beta \\
	\nonumber
	&=n\big(\widehat{\beta}_{\rm LS}-\beta \big)^T
	K^{-1}G^{-1}X^TXG^{-1}K^{-1}
	\widehat{\beta}_{\rm LS}\\
	\nonumber
	&\hspace{5mm}+\beta ^TK^{-1}\big(G^{-1}X^TX - I_p\big) nG^{-1}K^{-1}\widehat{\beta}_{\rm LS}\\
	\nonumber
	&\hspace{5mm}+\beta ^TK^{-1}\big( nG^{-1} -\Sigma^{-1} \big)K^{-1}\widehat{\beta}_{\rm LS}\\
	\nonumber
	&\hspace{5mm}+\beta ^TK^{-1}\Sigma^{-1}K^{-1} \big( \widehat{\beta}_{\rm LS}- \beta \big)
	\end{align}
	\endgroup
	It is easy to see that each term of the above equation converges uniformly in $\mathscr{X}_1$ by noting \dref{cd1} and \dref{lim1}.
	The uniform convergence of \dref{de2}  and \dref{de3} can be verified in a similar way.  This completes the proof. 

	
	\subsection{Proof of Theorem \ref{thm2}}
	We first consider the proof of \dref{fdloocv}. First, noting from the definition of $M$ in \eqref{hat} and Assumptions \ref{assum1} and \ref{ass3}.2, we have 
	$
	M_{ii}=O(1/ n),~i=1,\cdots,n,
	$
	which implies $1/(1-M_{ii})=1+O(1/ n)$, $i=1,\cdots,n$. 
	Then by using Lemma \ref{lem1}, Lemma \ref{lem4}, \dref{loocvfd} in Lemma \ref{lem3} and Lemma \ref{lem2}, we have
	\begingroup
	\allowdisplaybreaks
	\begin{align*}
	n^{2}\frac{\partial  \mathscr{C}_{\rm loocv}(K) }{\partial K}
	&=2\sigma^2 n K^{-1} G^{-1} X^T
	\Big(\sum_{i=1}^{n} \big(y_i-\widehat{y}_i\big)^2
	J_{ii} +O\Big(\frac1n\Big) \Big)
	X G^{-1}K^{-1}\\
	\nonumber
	&\hspace{-5mm}-2\sigma^2 n
	K^{-1} G^{-1} X^T
	\Big(\sum_{i,l=1}^{n}\big(y_i-\widehat{y}_i\big)y_lJ_{il}
	+O\Big(\frac1n\Big)
	\Big)
	X G^{-1}K^{-1}
	\\
	\nonumber
	&\hspace{-9mm}=2\sigma^2 n K^{-1} G^{-1}
	X^T
	L
	X G^{-1}K^{-1}\\
	\nonumber
	&\hspace{-5mm}-2\sigma^2  n
	K^{-1} G^{-1} X^T
	(I_n - X G^{-1}X^T)yy^T
	X G^{-1}K^{-1}+O_p(1/\sqrt{n})\\
	\nonumber
	&\hspace{-9mm}=2\sigma^2 n K^{-1} G^{-1}
	X^T
	L
	X G^{-1}K^{-1}\\
	\nonumber
	&\hspace{-5mm}-2\sigma^4  n
	K^{-1}
	G^{-1} K^{-1}G^{-1}X^Ty
	y^T
	X G^{-1}K^{-1}+O_p(1/\sqrt{n})\\
	\nonumber
	&\hspace{-9mm}=2\sigma^2 n K^{-1} G^{-1}
	X^T
	L
	X G^{-1}K^{-1}\\
	\nonumber
	&\hspace{-5mm}-2\sigma^4  n
	K^{-1}
	G^{-1} K^{-1}\widehat{\beta}
	\widehat{\beta}^TK^{-1}+O_p(1/\sqrt{n})\\
	\nonumber
	&\hspace{-9mm}\xra{}
	2\sigma^4K^{-1} \Sigma^{-1} K^{-1}
	-2\sigma^4K^{-1} \Sigma^{-1}
	K^{-1}
	\beta \beta ^T
	K^{-1}
	\end{align*}
	\endgroup
	in probability,
	where $L=\diag([\varepsilon_1^2,\varepsilon_2^2,\cdots,\varepsilon_n^2])$, the matrix $J_{ij}$ is defined in Lemma \ref{lem_J}, and the following results are used:
	\begingroup
	\allowdisplaybreaks
	\begin{align*}
	&
	\sum_{i=1}^{n} \big(y_i-\widehat{y}_i\big)^2 J_{ii}
	=\sum_{i=1}^{n}  \big[x_i^T\big(\beta -\widehat{\beta}\big) + \varepsilon_i\big]
	^2J_{ii}\\
	&\hspace{27.8mm}=\sum_{i=1}^{n}  \big[\varepsilon_i^2 +O_p(1/\sqrt{n})\big]J_{ii}
	=L +O_p(1/\sqrt{n}) I_n\\
	\nonumber
	&
	\sum_{i,l=1}^{n}\big(y_i-\widehat{y}_i\big)y_lJ_{il}
	=
	\sum_{i,l=1}^{n}\big((I_n - X G^{-1}X^T)y\big)_i y_lJ_{il} \\
	&\hspace{27.8mm}=(I_n - X G^{-1}X^T)yy^T\\
	&G^{-1} X^T(I_n - X G^{-1}X^T)y
	=G^{-1} (G - X^TX) G^{-1}X^Ty\\
	&\hspace{27.8mm}
	=
	\sigma^2G^{-1} K^{-1}G^{-1}X^Ty\\
	&X^T
	L X/n \xra{}\sigma^2\Sigma \text{ in probability, as } n\rightarrow\infty. 
	\end{align*}
	\endgroup
	
	Next, we consider the a.o to the first order of $\widehat{\eta}_{\rm loocv}$. By Lemma \ref{fdt},  to show the convergence of
	$\widehat{\eta}_{\rm loocv}$,
	it suffices to show that
	$
	{\rm Tr}\left(
	n^{2}\frac{\partial \mathscr{C}_{\rm loocv}(K)}{\partial K}
	\frac{\partial K(\eta)}{\partial \eta_i}\right)
	$ 
	converges in probability to
	$
	2\sigma^4{\rm Tr}\Big(K^{-1} \Sigma^{-1}
	K^{-1}
	\big(K  -
	\beta \beta ^T\big) K^{-1}
	\frac{\partial K(\eta)}{\partial \eta_i}\Big)
	$ for $1\leq i\leq m$ 
	uniformly in some compact subset $\mathscr{X}_2$ containing $\psi^*$ inside $\Omega$. To this goal, note that under Assumptions \ref{ass2} and \ref{ass3}, there exist  positive constants $c_7,c_8$
	such that   for all
	$\eta\in \mathscr{X}_2$, 
	$0<c_7\leq \|K(\eta)\|\leq c_8<\infty$ and $\|\frac{\partial K(\eta)}{\partial \eta_i}\|$, $i=1,\cdots,m$, are bounded from above. Then to prove the above uniform convergence is reduced to prove the uniform convergence of
	$n^2\frac{\partial \mathscr{C}_{\rm loocv}(K)}{\partial K}$ to
	$2\sigma^4K^{-1} \Sigma^{-1}
	K^{-1}
	\big(K-
	\beta \beta ^T\big) K^{-1}$ in probability in $\mathscr{X}_2$. This can be done in a similar way as the proof of the uniform convergence of $\overline{\mathscr{C}}_{\rm gcv}(K(\eta))$ to $W(\eta)$ in probability in $\mathscr{X}_1$ sketched in the proof of Theorem \ref{thm1} in the previous section. To save the space, the corresponding proof is omitted. This completes the proof of Theorem \ref{thm2}.

	\begin{rem}\label{rem_proof}
		
		The arguments on the reduction to the  convergence used in the proof of Theorem \ref{thm2} also apply to the proof of Theorems \ref{thm3}-\ref{thm5}. That is, it is sufficient to prove whether or not $n^2\frac{\partial \mathscr{C}_{\rm lkocv}(K)}{\partial K}$,
		$n^2\frac{\partial \mathscr{C}_{\rm rfcv}(K)}{\partial K}$, and
		$n^{3/2}\frac{\partial \mathscr{C}_{\rm hocv}(K)}{\partial K}$  in  $\Omega$ converge in probability  to 
		$cK^{-1} \Sigma^{-1}
		K^{-1}
		\big(K  \!-
		\beta \beta ^T\big) K^{-1}$, where $c$ is a constant and depends on the specific hyper-parameter estimator. In the proof of Theorem \ref{thm3}-\ref{thm5}, the above arguments will be adopted to save the space. 
		
	\end{rem}

	\subsection{Proof of Theorem \ref{thm3}}
	As mentioned in Remark \ref{rem_proof}, we only prove the  convergence of $n^2\frac{\partial \mathscr{C}_{\rm lkocv}(K)}{\partial K}$ in probability over $\Omega$. That is, by noting from \dref{lkocv} and \dref{iden2}, we need to prove 
	\begingroup
	\allowdisplaybreaks
	\begin{align}\label{lkocv_d}
	&\hspace{5mm}
	n^2\frac{\partial  \mathscr{C}_{\rm lkocv}(K) }{\partial K}
	=
	n^2\frac{1}{\tbinom{n}{k}} \frac{ \partial {\rm APE}_s}{\partial K}
	=n^2\frac{1}{\tbinom{n-1}{k-1}n/k} \frac{ \partial {\rm APE}_s}{\partial K}
	=\frac{nk}{\tbinom{n-1}{k-1}} \frac{ \partial {\rm APE}_s}{\partial K}\\\nonumber
	&=2\sigma^2\frac {n} {\tbinom{n-1}{k-1}}
	\sum_{s\in \mathcal{S}}
	K^{-1} G^{-1}
	X_s^TZ_s^{-2}
	(y_s \!-\! X_s\widehat{\beta})
	(y_s \!-\! X_s\widehat{\beta})^T
	Z_s^{-1} X_sG^{-1}K^{-1}\\
	\nonumber
	&\hspace{5mm}
	-2\sigma^2\frac {n} {\tbinom{n-1}{k-1}}
	\sum_{s\in \mathcal{S}}
	K^{-1} G^{-1}
	X_s^T
	Z_s^{-2} (y_s \!-\! X_s\widehat{\beta})
	y^TX
	G^{-1}K^{-1}\\
	\nonumber
	&=2\sigma^2 K^{-1}nG^{-1}
	\Big(\frac{1}{n\tbinom{n-1}{k-1}}
	\sum_{s\in \mathcal{S}}
	X_s^TZ_s^{-2}
	(y_s \!-\! X_s\widehat{\beta})
	(y_s \!- \! X_s\widehat{\beta})^TZ_s^{-1}X_s\Big)
	nG^{-1}K^{-1}\nonumber\\\nonumber
	&\hspace{5mm}
	-2\sigma^2
	K^{-1}n G^{-1}
	\Big(\frac {1} {\tbinom{n-1}{k-1}}
	\sum_{s\in \mathcal{S}}
	X_s^TZ_s^{-2}
	(y_s - X_s\widehat{\beta})\Big)
	y^TX
	G^{-1}K^{-1}\\\nonumber
	&\xra{}\left\{
	\begin{array}{cc}
	2\sigma^4
	(K^{-1}\Sigma^{-1}K^{-1}
	-  K^{-1}\Sigma^{-1}K^{-1}\beta \beta ^TK^{-1}), & \lambda =0\\
	\frac{2\sigma^4}{(1 - \lambda)^2}
	\big(K^{-1}\Sigma^{-1}K^{-1}
	-  K^{-1}\Sigma^{-1}K^{-1}\beta \beta ^TK^{-1}\big), & 0<\lambda <1
	\end{array}\right.
	\end{align}
	\endgroup
	in probability as $n\xra{}\infty$.

	We first consider the case {\it 1)}, i.e., $\lambda=0$.
	First, note that Assumption  \ref{ass3} implies that
	each element of $X_s^TX_s$ is bounded from above by $O(k)$
	and moreover Assumption  \ref{assum1} implies
	$
	X_s^TX_sG_{s^c}^{-1} = o(1)
	$
	due to $k/n\xra{}0$ as $n\xra{}\infty$. Then noting \dref{d1} and \dref{d2} shows
	\begin{align}
	X_s^TZ_s^{-1}
	=(1 +  o(1))X_s^T,  ~X_s^TZ_s^{-2} \!=  (1  \!+  o(1))X_s^T.
	\label{l2}
	\end{align}
	
	By using \dref{l2} and \dref{id3} and noting 
	\begin{align}\label{combination1}
	\frac{1}{\tbinom{n-1}{k-1}}\sum_{s\in \mathcal{S}}X_s^T(y_s - X_s\widehat{\beta})=X^T(y-X \widehat{\beta} ),\end{align}
	the following term in \eqref{lkocv_d}  can be written as follows:
	\begingroup
	\allowdisplaybreaks
	\begin{align}
	\label{l3}
	\frac{1}{\tbinom{n-1}{k-1}}
	&
	\sum_{s\in \mathcal{S}}X_s^TZ_s^{-2}
	(y_s -X_s\widehat{\beta})=\frac{1}{\tbinom{n-1}{k-1}}\sum_{s\in \mathcal{S}}(1 + o_p(1))X_s^T(y_s - X_s\widehat{\beta})\\
	\nonumber
	&=\frac{1}{\tbinom{n-1}{k-1}}(1 + o_p(1))\sum_{s\in \mathcal{S}}X_s^T(y_s - X_s\widehat{\beta})\\
	\nonumber
	&=(1 + o_p(1))X^T(y-X \widehat{\beta} )
	\\
	\nonumber
	&
	\xra{}\sigma^2K^{-1}\beta
	\end{align}in probability as $n\xra{}\infty$.
	\endgroup

	By using \dref{l2}, the following term in  \eqref{lkocv_d} can be written as follows: 
	\begingroup
	\allowdisplaybreaks
	\begin{align}
	\label{l4}
	\frac{1}{n\tbinom{n-1}{k-1}}
	&\sum_{s\in \mathcal{S}}
	X_s^TZ_s^{-2}(y_s - X_s\widehat{\beta})(y_s - X_s\widehat{\beta})^TZ_s^{-1}X_s\\
	\nonumber
	&=(1 + o_p(1))\frac{1}{n\tbinom{n-1}{k-1}}
	\sum_{s\in \mathcal{S}} X_s^T(y_s - X_s\widehat{\beta})(y_s - X_s\widehat{\beta})^TX_s.
	\end{align}
	Then by making use of \dref{a1}, we have 
	\begin{enumerate}
		\item It can be verified that 
		\begin{align}\label{combination2}
		\frac{1}{n\tbinom{n-1}{k-1}}
		\sum_{s\in \mathcal{S}} X_s^T \varepsilon_s\varepsilon_s^T X_s
		= X^T L X,\end{align} 
		where $L=\diag(\varepsilon) A\!~ \diag(\varepsilon)$, the main diagonal elements of $A$ are all equal to 1, and the $(kl)$-th non-diagonal element of $A$ with $k\neq l$
		is
		\begin{align*}
		A_{kl} = \tbinom{n-2}{k-2}/ \tbinom{n-1}{k-1}=(k-1)/(n-1) =o(1).
		\end{align*}
		Since $\frac 1{n^2} \sum_{1\leq k \neq l \leq n}A_{kl}^2=o(1)$, 
		by Lemma \ref{lem2}, we have
		\begin{align}
		\label{tt1}
		\frac{1}{n\tbinom{n-1}{k-1}}
		\sum_{s\in \mathcal{S}} X_s^T \varepsilon_s\varepsilon_s^T X_s
		&= X^T L X/n
		\xra{}\sigma^2 \Sigma
		\end{align}
		in probability as $n\xra{}\infty$.

		\item Noting that $X_s^T\varepsilon_s=O_p(\sqrt{k})$, $X^T\varepsilon=O_p(\sqrt{n})$ and $G^{-1} X_s^TX_s=O_p(k/n)$ yields
		\begingroup
		\allowdisplaybreaks
		\begin{align}
		\label{tt2}
		\frac{1}{n\tbinom{n-1}{k-1}}
		\sum_{s\in \mathcal{S}}
		&X_s^T\varepsilon_s\varepsilon^TX G^{-1} X_s^TX_s\\
		\nonumber
		&=\frac{1}{n\tbinom{n-1}{k-1}}
		\sum_{s\in \mathcal{S}} O_p(\sqrt{k})O_p(\sqrt{n})O_p(k/n)\\
		\nonumber
		&=\frac{1}{n\tbinom{n-1}{k-1}}\tbinom{n}{k}O_p(k\sqrt{k/n})
		= O_p(\sqrt{k/n})\xra{}0
		\end{align}in probability as $n\xra{}\infty$.
		\endgroup
		
		\item Noting that $X_s^T\varepsilon_s=O_p(\sqrt{k})$ and $G^{-1} X_s^TX_s=O_p(k/n)$ yields
		
		\begingroup
		\allowdisplaybreaks
		\begin{align}
		\label{tt3}
		\frac{1}{n\tbinom{n-1}{k-1}}
		\sum_{s\in \mathcal{S}}
		&\sigma^2X_s^T\varepsilon_s\beta ^TK^{-1}G^{-1} X_s^TX_s\\
		\nonumber
		&=\frac{1}{n\tbinom{n-1}{k-1}}\sum_{s\in \mathcal{S}} O_p(\sqrt{k})O_p(k/n)\\
		\nonumber
		&=\frac{1}{n\tbinom{n-1}{k-1}}\tbinom{n}{k}O_p(k\sqrt{k}/n)
		=O_p(\sqrt{k}/n)\xra{}0
		\end{align}in probability as $n\xra{}\infty$.
		\endgroup
		
		\item Noting that $\beta  - \widehat{\beta}=O_p(1/\sqrt{n})$, $X_s^T\varepsilon_s=O_p(\sqrt{k})$ and $X_s^TX_s=O_p(k)$ yields
		\begingroup
		\allowdisplaybreaks
		\begin{align}
		\label{tt4}
		&\hspace{-3mm}\frac{1}{n\tbinom{n-1}{k-1}}
		\sum_{s\in \mathcal{S}}
		X_s^TX_s
		(\beta  - \widehat{\beta})(y_s - X_s\widehat{\beta})^TX_s\\
		\nonumber
		&=\frac{1}{n\tbinom{n-1}{k-1}}
		\!\sum_{s\in \mathcal{S}}
		\!X_s^TX_s
		(\beta  \!-\! \widehat{\beta})(\varepsilon_s^TX_s  \!+\! (\beta \!-\!\widehat{\beta})^TX_s^TX_s)\\
		\nonumber
		&=\frac{1}{n\tbinom{n-1}{k-1}}
		\tbinom{n}{k}
		O_p(k) O_p(1/\sqrt{n}) \max(O_p(\sqrt{k}),O_p(k/\sqrt{n}))\\
		\nonumber
		&= \max(O_p(\sqrt{k/n}),O_p(k/n))\xra{}0
		\end{align}
		\endgroup 
		in probability as $n\xra{}\infty$.

	\end{enumerate}
	
	Finally, using \dref{l3} and \dref{l4}-\eqref{tt4} as well as \dref{cd1} and \dref{lim1} in Lemma \ref{lem4}, we conclude that \eqref{lkocv_d} is indeed true.

	\endgroup

We then consider the case {\it 2)}, i.e., $0<\lambda<1$. The idea of the proof is analogous to the case  {\it 1)}. In this case, 
Assumptions \ref{assum1} and \ref{assum4} imply that
\begingroup
\allowdisplaybreaks
\begin{subequations}\label{ap2}
	\begin{align}
	&X_s^TX_s/n =\lambda\Sigma+o(1),~G^{-1}X_s^TX_s = \lambda I_p +o(1) \\
	&X_s^TX_s G_{s^c}^{-1}= \lambda/  (1 - \lambda)  I_p  +o(1).
	\end{align}	
\end{subequations}
\endgroup
Then \dref{d1} and \dref{d2} yield
\begingroup
\allowdisplaybreaks
\begin{subequations}
	\label{l6}
	\begin{align}
	&X_s^TZ_s^{-1}
	=\Big(\frac{1}{1 -\lambda}  +  o(1)\Big)X_s^T,\\
	&X_s^TZ_s^{-2} = \Big(\frac{1}{(1 -\lambda)^2}  +  o(1)\Big)X_s^T.\label{l5}
	\end{align}
\end{subequations}
\endgroup

By using \dref{l5}, \dref{id3} and noting \eqref{combination1}, the following term in \eqref{lkocv_d}  can be written as follows:
\begingroup
\allowdisplaybreaks
\begin{align}
\label{l9}
\frac{1}{\tbinom{n-1}{k-1}}
&\sum_{s\in \mathcal{S}}X_s^TZ_s^{-2}(y_s - X_s\widehat{\beta})\\
\nonumber
&=\Big(  \frac{1}{(1 - \lambda)^2}  +o(1) \Big)
\frac{1}{\tbinom{n-1}{k-1}}
\sum_{s\in \mathcal{S}}X_s^T  (y_s - X_s\widehat{\beta})\\
\nonumber
&=\Big(  \frac{1}{(1 - \lambda)^2}  +o(1) \Big)X^T(y-X\widehat{\beta})\\
\nonumber
&\xra{}\frac{\sigma^2}{(1 - \lambda)^2} K^{-1}\beta
\end{align}in probability as $n\xra{}\infty$.
\endgroup

By using \dref{l6}, the following term in  \eqref{lkocv_d} can be written as follows: 
\begingroup
\allowdisplaybreaks
\begin{align}
\label{l8}
&\hspace{-3mm}
\frac{1}{n\tbinom{n-1}{k-1}}
\sum_{s\in \mathcal{S}}
X_s^TZ_s^{-2}(y_s - X_s\widehat{\beta})(y_s - X_s\widehat{\beta})^TZ_s^{-1}X_s\\
\nonumber
&=\Big(\frac{1}{(1 -\lambda)^3}  +  o(1)\Big)
\frac{1}{n\tbinom{n-1}{k-1}}
\sum_{s\in \mathcal{S}} X_s^T(y_s - X_s\widehat{\beta})(y_s - X_s\widehat{\beta})^TX_s.
\end{align}
\endgroup
Then by making use of \dref{a1}, we have 
\begin{enumerate}
	\item By using \dref{ap2} and \eqref{combination2}, we have
	\begingroup
	\allowdisplaybreaks
	\begin{align}
	\label{b1}
	\frac{1}{n\tbinom{n-1}{k-1}}
	&\sum_{s\in \mathcal{S}}
	X_s^T\varepsilon_s\varepsilon_s^TX_s
	-  X_s^T\varepsilon_s\varepsilon^TX G^{-1} X_s^TX_s
	\\
	\nonumber
	&=\frac{1}{n\tbinom{n-1}{k-1}}
	\sum_{s\in \mathcal{S}}
	X_s^T\varepsilon_s\varepsilon_s^TX_s
	-(\lambda +o(1) )X_s^T\varepsilon_s\varepsilon^TX
	\\
	\nonumber
	&=\frac1n
	\!\Big(
	X^T \diag(\varepsilon) A \diag(\varepsilon) X
	- (\lambda  +o(1) )
	X^T\varepsilon\varepsilon^TX
	\Big)\\
	\nonumber
	&=\frac1n
	\Big(
	X^T \diag(\varepsilon)
	\big(A -\lambda U \big) \diag(\varepsilon)X
	\Big)+o_p(1)\\
	\nonumber
	&=(1-\lambda)\frac1n
	X^T \diag(\varepsilon) \widetilde{A}~\! \diag(\varepsilon) X +o_p(1)
	\end{align} where 
	all of the main diagonal elements of $A$ are equal to 1 and the $(kl)$-th non-diagonal element of $A$ 
	is
	$A_{kl} =(k-1)/(n-1)
	$,
	$U$ denotes the matrix with all elements equal to 1, 
	and thus all of the main diagonal elements of $\widetilde{A}$ are one and its non-diagonal elements are $\big((k-1)/(n-1)-\lambda\big)/(1-\lambda)= o(1)$. Then by using Lemma \ref{lem2}, we have
	\begin{align}
	\frac{1}{n\tbinom{n-1}{k-1}}
	&\sum_{s\in \mathcal{S}}
	X_s^T\varepsilon_s\varepsilon_s^TX_s
	-  X_s^T\varepsilon_s\varepsilon^TX G^{-1} X_s^TX_s
	\xra{}(1-\lambda)\sigma^2 \Sigma\nonumber
	\end{align}
	in probability as $n\xra{}\infty$.
	
	\endgroup
	
	\item By using \dref{ap2} and $X^T\varepsilon=O_p(\sqrt{n})$,
	we have \begingroup
	\allowdisplaybreaks
	\begin{align}
	\label{b2}
	\frac{1}{n\tbinom{n-1}{k-1}}
	&\sum_{s\in \mathcal{S}}
	\sigma^2X_s^T\varepsilon_s\beta ^TK^{-1}G^{-1} X_s^TX_s\\
	\nonumber
	&=(\lambda  +o(1) )\frac{1}{n\tbinom{n-1}{k-1}}
	\sum_{s\in \mathcal{S}}
	\sigma^2X_s^T\varepsilon_s\beta ^TK^{-1}
	\\
	\nonumber
	&=\sigma^2(\lambda  +o(1) ) X^T \varepsilon \beta ^TK^{-1}/n\\
	&=O_p(1/\sqrt{n})\xra{}0\nonumber
	\end{align}
	in probability as $n\xra{}\infty$.
	\endgroup
	
	\item By using \dref{ap2}, \dref{id3} and $\beta  - \widehat{\beta}=O_p(1/\sqrt{n})$, we have
	\begingroup
	\allowdisplaybreaks
	\begin{align}
	\label{b3}
	\frac{1}{n\tbinom{n-1}{k-1}}
	&\sum_{s\in \mathcal{S}}
	X_s^TX_s
	(\beta  - \widehat{\beta})(y_s - X_s\widehat{\beta})^TX_s\\
	\nonumber
	&=
	( \lambda \Sigma +o(1)  )
	(\beta  - \widehat{\beta})
	\frac{1}{\tbinom{n-1}{k-1}}
	\sum_{s\in \mathcal{S}}
	(y_s - X_s\widehat{\beta})^TX_s\\
	\nonumber
	&= \big(\lambda\Sigma+o(1)\big)
	(\beta  - \widehat{\beta})
	(y - X\widehat{\beta})^TX\\
	\nonumber
	&=\sigma^2 \big(\lambda\Sigma+o(1)\big)
	(\beta  - \widehat{\beta})O_p(1)\\
	\nonumber
	&   = O_p(1/\sqrt{n})\xra{}0
	\end{align}
	in probability as $n\xra{}\infty$.
	\endgroup

\end{enumerate}

Finally, using \dref{l9} and \dref{l8}-\dref{b3}
as well as \dref{cd1} and \dref{lim1}, we conclude that \eqref{lkocv_d} is indeed true. This completes the proof.

\subsection{Proof of Theorem \ref{thm4}}

Similarly, for proving Theorem \ref{thm4}, we just verify the following limits hold in probability as $n\xra{}\infty$
\begingroup
\allowdisplaybreaks
\begin{align}
\label{rinf}
&\hspace{5mm}
n^2\frac{\partial  \mathscr{C}_{\rm rfcv}(K) }{\partial K}
=
n^2\frac{1}{r}\sum_{s\in \mathscr{S}}
\frac{ \partial {\rm APE}_s}{\partial K}
=nk\sum_{s\in \mathscr{S}}\frac{ \partial {\rm APE}_s}{\partial K}\\
\nonumber &=2\sigma^2n
\sum_{s\in \mathscr{S}}
K^{-1} G^{-1}
X_s^TZ_s^{-2}
(y_s \!-\! X_s\widehat{\beta})
(y_s \!-\! X_s\widehat{\beta})^T
Z_s^{-1} X_sG^{-1}K^{-1}\\
\nonumber
&\hspace{5mm}
-2\sigma^2n
\sum_{s\in \mathscr{S}}
K^{-1} G^{-1}
X_s^T
Z_s^{-2} (y_s \!-\! X_s\widehat{\beta})
y^TX
G^{-1}K^{-1}\\
\nonumber
&=2\sigma^2 K^{-1}nG^{-1}
\Big(\frac{1}{n}
\sum_{s\in \mathscr{S}}
X_s^TZ_s^{-2}
(y_s \!-\! X_s\widehat{\beta})
(y_s \!- \! X_s\widehat{\beta})^TZ_s^{-1}X_s\Big)
nG^{-1}K^{-1}\\
\nonumber
&\hspace{5mm}
-2\sigma^2
K^{-1}n G^{-1}
\Big(
\sum_{s\in \mathscr{S}}
X_s^TZ_s^{-2}
(y_s - X_s\widehat{\beta})\Big)
y^TX
G^{-1}K^{-1}\\
&
\xra{}
\left\{
\begin{array}{ll}
2\sigma^4
(K^{-1}\Sigma^{-1}K^{-1}
-  K^{-1}\Sigma^{-1}K^{-1}\beta \beta ^TK^{-1}),&if~r\xra{}\infty~ as ~n\xra{}\infty,\\
\frac{2\sigma^4r^2}{(r - 1)^2}
(K^{-1}\Sigma^{-1}K^{-1}
-  K^{-1}\Sigma^{-1}K^{-1}\beta \beta ^TK^{-1})&\\
~~~~~
+\frac{2\sigma^2r^2}{(r - 1)^2}
K^{-1}\Sigma^{-1}S'_n \Sigma^{-1} K^{-1}
+o_p(1),& ~if~ r \mbox{ is fixed}.
\end{array}
\right.\nonumber
\end{align}
\endgroup
It follows from $G^{-1} X_s^TX_s=O(k/n)$,  $X_s^T\varepsilon_s=O_p(\sqrt{k})$ and $\beta ^TK^{-1}=O(1)$  that
\begingroup
\allowdisplaybreaks
\begin{align}
\label{c2}
\frac1n
\sum_{s\in \mathscr{S}}
&\sigma^2X_s^T\varepsilon_s\beta ^TK^{-1}G^{-1} X_s^TX_s
\\
\nonumber
&=O_p(n^{-1}r\sqrt{k}k/n)
=O_p(\sqrt{k}/n)\\
\nonumber
&=O_p\Big(\sqrt{\frac1 {n r}} \Big)\xra{}0
\end{align}
\endgroup
in probability as $n\xra{}\infty$
for both $r\xra{}\infty$  and a fixed $r$ as $n\xra{}\infty$.

{\it Case 1} : $r\xra{}\infty$ as $n\xra{}\infty$.

It follows from the expressions \dref{d1} and \dref{d2} that
\begin{subequations}
	\begin{align}
	&X_s^TZ_s^{-1}(y_s - X_s\widehat{\beta})
	=\left(  1  +o(1) \right)X_s^T  (y_s - X_s\widehat{\beta})\\
	&X_s^TZ_s^{-2}(y_s - X_s\widehat{\beta})
	=\left(  1 +o(1) \right)X_s^T  (y_s - X_s\widehat{\beta}). \label{d3}
	\end{align}
\end{subequations}
Firstly, by \dref{d3} and \dref{id3}, the term involved in \dref{rinf} converges
\begin{align}
\label{c7}
\sum_{s\in \mathscr{S}}X_s^TZ_s^{-2}(y_s - X_s\widehat{\beta})
&=( 1  +o(1) )\sum_{s\in \mathscr{S}}X_s^T(y_s - X_s\widehat{\beta})\\
&=( 1  +o(1) )X^T(y-X^T\widehat{\beta})
\xra{}\sigma^2K^{-1}\beta  \nonumber
\end{align}
in probability as $n\xra{}\infty$.

Secondly, the term involved in \dref{rinf} has the approximation:
\begingroup
\allowdisplaybreaks
\begin{align}
\label{c6}
\frac1n
&\sum_{s\in \mathscr{S}} X_s^TZ_s^{-2}(y_s - X_s\widehat{\beta})(y_s - X_s\widehat{\beta})^TZ_s^{-1}X_s\\
\nonumber
&=( 1  +o(1) )
\frac1n
\sum_{s\in \mathscr{S}}X_s^T(y_s - X_s\widehat{\beta})(y_s - X_s\widehat{\beta})^TX_s.
\end{align}
\endgroup
Then by making use of  the decomposition \dref{a1}, we have 
\begin{enumerate}[1.]
	\item By using the estimate $G^{-1} X_s^TX_s = O(k/n)=O(1/r)=o(1)$ and  Lemma \ref{lem2}, one has
	\begingroup
	\allowdisplaybreaks
	\begin{align}
	\label{c5}
	\frac1n
	&\sum_{s\in \mathscr{S}}
	\Big(X_s^T\varepsilon_s\varepsilon_s^TX_s
	-  X_s^T\varepsilon_s\varepsilon^TX G^{-1} X_s^TX_s\Big)
	\\
	\nonumber
	&=\frac1n
	\sum_{s\in \mathscr{S}}
	\Big(X_s^T\varepsilon_s\varepsilon_s^TX_s
	-o(1) X_s^T\varepsilon_s\varepsilon^TX \Big)
	\\
	\nonumber
	&=\frac1n
	\!\Big(
	X^T L  X
	- o(1)
	X^T\varepsilon\varepsilon^TX
	\Big)\\
	\nonumber
	&=\frac1n
	X^T \diag(\varepsilon)
	A~ \diag(\varepsilon)X
	+o(1)O_p(1)\\
	\nonumber
	&\xra{}
	\sigma^2 \Sigma
	\end{align}
	\endgroup
	in probability as $n\xra{}\infty$
	since $\frac 1{n^2} \sum_{1\leq k \neq l \leq n}A_{kl}^2=\frac 1{n^2} (r(k^2-k))=(k-1)/n=1/r-1/n=o(1)$,
	where  $L=\diag([\varepsilon_{s_1}\varepsilon_{s_1}^T,\varepsilon_{s_2}\varepsilon_{s_2}^T,\cdots,\varepsilon_{s_r}\varepsilon_{s_r}^T])$ is a block diagonal matrix, $A$ is the matrix whose $(jl)$-th element is one when both $j$ and $l$ simultaneously belong to some $s_i$,  zero otherwise.

	\item Using the estimates $\beta  - \widehat{\beta}=O_p(1/\sqrt{n}) $,
	$X_s^T\varepsilon_s=O_p(\sqrt{k})$, and $X_s^TX_s=O(k)$ infers
	\begingroup
	\allowdisplaybreaks
	\begin{align}
	\label{c3}
	\frac1n
	&\sum_{s\in \mathscr{S}}
	X_s^TX_s
	(\beta  - \widehat{\beta})(y_s - X_s\widehat{\beta})^TX_s\\
	\nonumber
	&
	=\frac1n
	\sum_{s\in \mathscr{S}}
	X_s^TX_s
	(\beta  - \widehat{\beta})(\varepsilon_s^TX_s  + (\beta -\widehat{\beta})^TX_s^TX_s)\\
	\nonumber
	\nonumber
	&=O_p(n^{-1})rO_p(k) O_p(1/\sqrt{n}) \max(O_p(\sqrt{k}),O_p(k/\sqrt{n}))\\
	&=\max(O_p(\sqrt{1/r}),O_p(1/r))\xra{}0. \nonumber
	\end{align}
	\endgroup
\end{enumerate}

Finally, using \dref{c7} and  \dref{c2}, \dref{c6},  \dref{c5}, and \dref{c3} as well as \dref{cd1} and \dref{lim1} in Lemma \ref{lem4}, we conclude that \eqref{rinf} is indeed true.

{\it Case 2} : $r$ is a fixed positive integer as $n\xra{}\infty$.

It follows from the expressions \dref{d1} and \dref{d2} that
\begin{subequations}
	\label{c13}
	\begin{align}
	&X_s^TZ_s^{-1}(y_s - X_s\widehat{\beta})
	=\left(  \frac{r}{r- 1}  +o(1) \right)X_s^T  (y_s - X_s\widehat{\beta})\label{c9}\\
	&X_s^TZ_s^{-2}(y_s - X_s\widehat{\beta})
	=\left(  \frac{r^2}{(r - 1)^2}  +o(1) \right)X_s^T  (y_s - X_s\widehat{\beta}).\label{c10}
	\end{align}
\end{subequations}
Firstly, by \dref{c10} and \dref{id3}, the term involved in \dref{rinf} converges
\begin{align}
\label{c11}
\sum_{s\in \mathscr{S}}X_s^TZ_s^{-2}(y_s - X_s\widehat{\beta})
&=\Big(  \frac{r^2}{(r - 1)^2}  +o(1) \Big)
\sum_{s\in \mathscr{S}}X_s^T(y_s - X_s\widehat{\beta})\\
\nonumber
&=\Big(  \frac{r^2}{(r - 1)^2}  +o(1) \Big)
X^T(y - X\widehat{\beta})\\
&\xra{}\frac{r^2}{(r - 1)^2} \sigma^2K^{-1}\beta \nonumber
\end{align}
in probability as $n\xra{}\infty$.

Secondly, by \dref{c10} and \dref{id3}, the term involved in \dref{rinf} has the approximation:
\begingroup
\allowdisplaybreaks
\begin{align}
\label{c12}
\frac1n
&\sum_{s\in \mathscr{S}} X_s^TZ_s^{-2}(y_s - X_s\widehat{\beta})(y_s - X_s\widehat{\beta})^TZ_s^{-1}X_s
\\
\nonumber
&=\left(  \frac{r^3}{(r - 1)^3}  +o(1) \right)
\frac1n
\Big(\sum_{s\in \mathscr{S}}X_s^T(y_s - X_s\widehat{\beta})(y_s - X_s\widehat{\beta})^TX_s\Big).\end{align}
\endgroup
Then by making use of  the decomposition \dref{a1}, we have 
\begin{enumerate}[1.]
	\item By using the estimate $G^{-1} X_s^TX_s = 1/rI_p+ o(1)$  and  Lemma \ref{lem2}, under Assumptions \ref{ass3} and \ref{assum4},
	there holds
	\begingroup
	\allowdisplaybreaks
	\begin{align}
	\label{c4}
	&\frac1n\Big(\sum_{s\in \mathscr{S}}
	X_s^T\varepsilon_s\varepsilon_s^TX_s
	-  X_s^T\varepsilon_s\varepsilon^TX G^{-1} X_s^TX_s\Big)
	\\
	\nonumber
	&=\frac1n
	\Big(\sum_{s\in \mathscr{S}}
	X_s^T\varepsilon_s\varepsilon_s^TX_s
	-(r^{-1} +o(1) )X_s^T\varepsilon_s\varepsilon^TX \Big)
	\\
	\nonumber
	&=\frac1n
	\Big(
	X^T LX
	- (r^{-1} +o(1) )
	X^T\varepsilon\varepsilon^TX
	\Big)\\
	\nonumber
	\nonumber
	&=(1-r^{-1})\frac1n
	X^T
	(L''  + L')
	X +o_p(1) \\
	&=
	(1-r^{-1})(\sigma^2\Sigma +S'_n) +o_p(1) \nonumber
	\end{align}
	\endgroup
	where  $L=\diag([\varepsilon_{s_1}\varepsilon_{s_1}^T,\varepsilon_{s_2}\varepsilon_{s_2}^T,\cdots,\varepsilon_{s_r}\varepsilon_{s_r}^T])$ is a block diagonal matrix,
	$L''=\diag([\varepsilon_1\varepsilon_1,\cdots,\varepsilon_{n}\varepsilon_{n}^T])$, and $S'_n$ and $L'$ are defined as in \dref{li}.
	
	\item 
	By using Assumption \ref{assum4} that $X_s^TX_s/k\xra{}\Sigma$ as $n\xra{}\infty$, $\beta  - \widehat{\beta}=O_p(1/\sqrt{n})$, and \dref{id3},
	one has
	\begingroup
	\allowdisplaybreaks
	\begin{align}
	\label{c8}
	\frac1n
	&\Big(\sum_{s\in \mathscr{S}}
	X_s^TX_s
	(\beta  - \widehat{\beta})(y_s - X_s\widehat{\beta})^TX_s\Big)
	\\
	\nonumber
	&=
	(r^{-1}\Sigma +o(1)  )
	(\beta  - \widehat{\beta})
	\Big(\sum_{s\in \mathscr{S}}
	(y_s - X_s\widehat{\beta})^TX_s\Big)
	\\
	\nonumber
	&=(r^{-1}\Sigma +o(1)  )
	(\beta  - \widehat{\beta})
	(y - X\widehat{\beta})^TX
	\\
	&   = O_p\big(1/\sqrt{n}\big)O_p(1)
	=o_p(1). \nonumber
	\end{align}
	\endgroup
\end{enumerate}
Finally, using \dref{c11} and  \dref{c2}, \dref{c12}, \dref{c4}, and \dref{c8} as well as \dref{cd1} and \dref{lim1} in Lemma \ref{lem4}, we conclude that \eqref{rinf} is indeed true. The proof is completed.

\subsection{Proof of Theorem \ref{thm5}}

Similarly, for proving Theorem \ref{thm5}, we just verify the approximation: 
\begingroup
\allowdisplaybreaks
\begin{align} \label{holim}
&\hspace{5mm}
n^{3/2}\frac{\partial  \mathscr{C}_{\rm hocv}(K) }{\partial K}
=
n^{3/2}
\frac{ \partial {\rm APE}_{s}}{\partial K}\\
\nonumber&=2\sigma^2r\sqrt{n}
K^{-1} G^{-1}
X_s^TZ_s^{-2}
(y_s \!-\! X_s\widehat{\beta})
(y_s \!-\! X_s\widehat{\beta})^T
Z_s^{-1} X_sG^{-1}K^{-1}\\
\nonumber
&\hspace{5mm}
-2\sigma^2r\sqrt{n}
K^{-1} G^{-1}
X_s^T
Z_s^{-2} (y_s \!-\! X_s\widehat{\beta})
y^TX
G^{-1}K^{-1}\\
\nonumber
&=\frac1{\sqrt{n}}2\sigma^2 r K^{-1}nG^{-1}
\Big(\frac{1}{n}
X_s^TZ_s^{-2}
(y_s \!-\! X_s\widehat{\beta})
(y_s \!- \! X_s\widehat{\beta})^TZ_s^{-1}X_s\Big)
nG^{-1}K^{-1}\\
\nonumber
&\hspace{5mm}
-2\sigma^2r
K^{-1}n G^{-1}
\big(X_s^TZ_s^{-2}
(y_s - X_s\widehat{\beta})/\sqrt{n}\big)
y^TX
G^{-1}K^{-1}\\
\nonumber&=
\frac{2\sigma^2r^3}{(r - 1)^2}
K^{-1}\Sigma^{-1}
\left(\frac{1}{\sqrt{n}}
(X_s^T\varepsilon_s - X^T\varepsilon)\right)
\beta K^{-1}
+o_p(1)
+O_p\left(
\frac{1}{\sqrt{n}}\right).\nonumber
\end{align}
\endgroup

By \dref{d1} and \dref{d2} one has
\begin{subequations}
	\label{cc}
	\begin{align}
	&X_s^TZ_s^{-1}(y_s - X_s\widehat{\beta})
	=\left(  \frac{r}{r- 1}  +o(1) \right)X_s^T  (y_s - X_s\widehat{\beta})\\
	&X_s^TZ_s^{-2}(y_s - X_s\widehat{\beta})
	=\left(  \frac{r^2}{(r - 1)^2}  +o(1) \right)X_s^T  (y_s - X_s\widehat{\beta}).
	\end{align}
\end{subequations} 
Firstly, by using \dref{cc}  the term involved in \dref{holim} can be approximated as
\begin{align}
\label{d11}
X_s^TZ_s^{-2}(y_s &- X_s\widehat{\beta})
=\Big(  \frac{r^2}{(r - 1)^2}  +o(1) \Big)
X_s^T(y_s - X_s\widehat{\beta})\\
\nonumber
&=\frac{r^2}{(r - 1)^2}
\Big(X_s^T\varepsilon_s
- r^{-1}X^T\varepsilon\Big)
+o_p(\sqrt{n}), \nonumber
\end{align}
where the approximation 
\begin{align}
\label{a35}
&\hspace{5mm}
X_s^T(y_s - X_s\widehat{\beta})\\
\nonumber
&=X_s^T\varepsilon_s - X_s^TX_sG^{-1}X^T\varepsilon
+
\sigma^2X_s^TX_sG^{-1}K^{-1}  \beta\\
\nonumber
&=X_s^T\varepsilon_s
- (r^{-1}+o(1))X^T\varepsilon
+
\sigma^2(r^{-1}+o(1)) K^{-1}  \beta\\
\nonumber
&=\underbrace{X_s^T\varepsilon_s
	- r^{-1}X^T\varepsilon}_{O_p(\sqrt{n})}
+
\underbrace{\sigma^2r^{-1} K^{-1} \beta}_{O_p(1)}
+\underbrace{o(1))X^T\varepsilon}_{o_p(\sqrt{n})} + o(1)\\
&=X_s^T\varepsilon_s
- r^{-1}X^T\varepsilon
+o_p(\sqrt{n})\nonumber
\end{align}
is derived from \dref{id9}
and the scale of the term
\begin{align*}
X_s^T\varepsilon_s
- r^{-1}X^T\varepsilon
&=X_s^T\varepsilon_s
- r^{-1}\sum_{s'\in \mathscr{S}} X_{s'}^T\varepsilon_{s' }\\
&=r^{-1}\sum_{s'\in \mathscr{S}\backslash s} (X_s^T\varepsilon_s - X_{s'}^T\varepsilon_{s' })
=O_p(\sqrt{n}),
\end{align*}
which is induced by
\begin{align*}
&\hspace{-6mm}E\left(\frac{1}{\sqrt{n}}\sum_{s'\in \mathscr{S}\backslash s}(X_s^T\varepsilon_s - X_{s'}^T\varepsilon_{s' })
\frac{1}{\sqrt{n}}\sum_{s''\in \mathscr{S}\backslash s}
(X_s^T\varepsilon_s - X_{s''}^T\varepsilon_{s'' })^T\right)\\
&=\frac1n\sum_{s',s''} \sigma^2X_s^TX_s 
+\frac1n\sum_{s'}\sigma^2 X_{s'}^TX_{s'}
\\
&\xra{}
(r-1)\sigma^2\Sigma
\end{align*}
in terms of Assumption \ref{assum4} and \cite[Theorem 14.4-1, page 476]{Bishop2007}.

Secondly, by using \dref{cc} the term involved in \dref{holim} can be approximated as
\begingroup
\allowdisplaybreaks
\begin{align}
\label{d12}
\frac1n
X_s^TZ_s^{-2}&(y_s - X_s\widehat{\beta})(y_s - X_s\widehat{\beta})^TZ_s^{-1}X_s
\\
\nonumber
&=\left(  \frac{r^3}{(r - 1)^3}  +o(1) \right)
\frac1n
\Big(X_s^T(y_s - X_s\widehat{\beta})(y_s - X_s\widehat{\beta})^TX_s\Big).
\end{align}
\endgroup

By using the similar procedure in the proof of Lemma \ref{lem2} and \dref{a35},
one derives that
\begingroup
\allowdisplaybreaks
\begin{align}
\label{d4}
&\hspace{5mm}
\frac1n
\Big(X_s^T(y_s - X_s\widehat{\beta})(y_s - X_s\widehat{\beta})^TX_s\Big)
\\
\nonumber
&=\frac1n
\Big(
\big(X_s^T\varepsilon_s
- r^{-1}X^T\varepsilon
+o_p(\sqrt{n})\big)
\big(X_s^T\varepsilon_s
- r^{-1}X^T\varepsilon
+o_p(\sqrt{n})\big)^T
\Big)\\
\nonumber
&=\frac1n
\Big(
X_s^T\varepsilon_s \varepsilon_s  ^T X_s
+r^{-2} X^T\varepsilon \varepsilon^TX
-r^{-1} X_s^T\varepsilon_s \varepsilon^TX
-r^{-1} X^T\varepsilon \varepsilon_s^T X_s
+o_p(n)
\Big)\\
\nonumber
&=\frac1n
X(L_1 + L_2 + L_3 + L_4)X^T +o_p(1)=
O_p(1),
\end{align}
\endgroup
where  $L_1=\diag([0,\cdots,\varepsilon_{s}\varepsilon_{s}^T,\cdots,0])$ is a block diagonal matrix,
$L_2=r^{-2}U$ with the matrix  $U$ denoting all elements being one,  $L_3=-[0,\cdots,\varepsilon_{s}^T,\cdots,0]^T\varepsilon^T$ and $L_4 = L_3^T$.

Finally, by \dref{d11} and \dref{d12}--\dref{d4} as well as \dref{cd1} and \dref{lim1} in Lemma \ref{lem4}, we conclude that \eqref{holim} is indeed true. This completes the proof.

\subsection{Proof of Theorem \ref{thm6}}

The proof is straightforward by noting that the asymptotically optimal hyperparameters for the ridge regression estimator  are the same single value $\sigma^2{\rm Tr}\big(\Sigma^{-1} \big)/\beta^T\Sigma^{-1}\beta$  in the senses of both Definitions \ref{defn1} and \ref{defn2}. This completes the proof.

\end{appendix}

\begin{appendix}
\renewcommand{\thesection}{B}
\section{Technique Lemmas}

\setcounter{equation}{0}
\renewcommand{\theequation}{B\arabic{equation}}
\setcounter{lem}{0}
\renewcommand{\thelem}{B\arabic{lem}}
\setcounter{rem}{0}

%
%
%

\setcounter{subsection}{0}
\subsection{Matrix Differentials and Related Identities}


\begin{lem} \cite{Petersen2012}
	\label{md}
	Assume that  $X$ is a square matrix.
	Then
	\begin{align}
	\frac{\partial X^{-1}_{kl}}{\partial X_{ij}}=-X^{-1}_{ki} X^{-1}_{jl} \label{md3}
	\end{align}
	where $X_{ij}$  and $X^{-1}_{ij}$ denote the $(ij)$-th element of the matrix $X$ and $X^{-1}$, respectively.
	
\end{lem}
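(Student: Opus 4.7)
The plan is to derive the identity by differentiating the defining relation $X X^{-1} = I$, which is the most direct route and avoids any appeal to Cramer's rule or cofactor expansions. Since the entries of the identity matrix are constants, differentiating both sides with respect to $X_{ij}$ yields
\begin{equation*}
\frac{\partial X}{\partial X_{ij}}\, X^{-1} + X\, \frac{\partial X^{-1}}{\partial X_{ij}} = 0,
\end{equation*}
and then left-multiplying by $X^{-1}$ gives the matrix-level identity
\begin{equation*}
\frac{\partial X^{-1}}{\partial X_{ij}} = - X^{-1} \frac{\partial X}{\partial X_{ij}} X^{-1}.
\end{equation*}

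Next I would identify $\partial X / \partial X_{ij}$ with the single-entry matrix $E_{ij}$, i.e.\ the matrix whose $(i,j)$ entry is $1$ and whose other entries are $0$, since $X_{ab}$ treated as a function of the independent entries of $X$ has derivative $\delta_{ai}\delta_{bj}$ with respect to $X_{ij}$. Reading off the $(k,l)$ entry of the product then gives
\begin{equation*}
\frac{\partial X^{-1}_{kl}}{\partial X_{ij}} = -\sum_{a,b} X^{-1}_{ka}\,(E_{ij})_{ab}\, X^{-1}_{bl} = -X^{-1}_{ki}\, X^{-1}_{jl},
\end{equation*}
which is exactly the claimed formula.

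There is no serious obstacle here: the only mild subtlety is justifying that $X^{-1}$ depends smoothly on the entries of $X$ so that the differentiation under $X X^{-1} = I$ is legitimate. This follows from the fact that the entries of $X^{-1}$ are rational functions of the entries of $X$ (by the cofactor formula $X^{-1} = \operatorname{adj}(X)/\det(X)$) and that $\det(X) \neq 0$ on the open set of invertible matrices, so each $X^{-1}_{kl}$ is $C^\infty$ on that set. With this smoothness in hand, the two displayed lines above constitute a complete proof, and no further calculation is needed.
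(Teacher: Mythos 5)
Your proof is correct: differentiating $XX^{-1}=I$ to obtain $\partial X^{-1}/\partial X_{ij}=-X^{-1}E_{ij}X^{-1}$ and reading off the $(k,l)$ entry gives exactly \eqref{md3}, and your remark on smoothness via the cofactor formula properly justifies the differentiation. The paper itself gives no proof of this lemma, citing it directly from \cite{Petersen2012}, so your argument simply supplies the standard derivation that the reference relies on; there is nothing to compare beyond that.
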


\begin{lem}\label{lem_J} 
	Let $J_{ij}$ with $i,j\in\mathbb N$ denote a matrix with compatible dimensions such that its elements are zero everywhere except the $(ij)$-th element which is equal to 1 and let $B,C\in\mathbb R^{l\times k}$ be any matrices. 
	Then the following identities hold:	
	\begingroup
	\allowdisplaybreaks
	\begin{subequations}
		\begin{align}
		&\sum_{i=1}^l\sum_{j=1}^kB_{ij}J_{ij} = B,\label{id1}\\
		& \sum_{a,b=1}^k B_{ia}B^T_{bj}J_{ab}= \sum_{a,b=1}^k B_{ia}B_{jb}J_{ab}=B^TJ_{ij}B,		\label{J1}\\
		& \sum_{i,j=1}^l b_ic_{j}J_{ij}= bc^T, \quad b,c\in\mathbb R^{l\times 1}.\label{J2}
		\end{align}
	\end{subequations}
	\endgroup
\end{lem}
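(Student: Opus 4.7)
The plan is to verify each of the three identities entrywise, using the defining property of the single-entry matrix, namely $(J_{ij})_{pq} = \delta_{ip}\delta_{jq}$, where $\delta$ denotes the Kronecker delta (and the ambient dimensions are understood from context). Since every assertion is an equality of matrices, it suffices in each case to evaluate the $(p,q)$-entry of the left- and right-hand sides and observe that they agree for arbitrary admissible indices $p,q$.

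For identity \eqref{id1}, I would compute $\bigl(\sum_{i=1}^l\sum_{j=1}^k B_{ij} J_{ij}\bigr)_{pq} = \sum_{i,j} B_{ij}\delta_{ip}\delta_{jq}$, which collapses under the two Kronecker deltas to $B_{pq}$; this is the $(p,q)$-entry of the right-hand side. For \eqref{J2}, the analogous one-line entrywise computation gives $\sum_{i,j} b_i c_j \delta_{ip}\delta_{jq} = b_p c_q = (bc^T)_{pq}$, proving the claim.

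For \eqref{J1}, which asserts two equalities, the first equality is merely a notational restatement, since $B^T_{bj} = B_{jb}$ by the definition of the transpose. For the second equality, I would expand the right-hand side by matrix-product rules, $\bigl(B^T J_{ij} B\bigr)_{pq} = \sum_{a,b} (B^T)_{pa} (J_{ij})_{ab} B_{bq} = \sum_{a,b} B_{ap}\,\delta_{ia}\delta_{jb}\,B_{bq} = B_{ip}B_{jq}$, and then verify that the $(p,q)$-entry of the left-hand side, $\sum_{a,b} B_{ia} B_{jb}(J_{ab})_{pq} = \sum_{a,b} B_{ia} B_{jb}\,\delta_{ap}\delta_{bq}$, likewise reduces to $B_{ip}B_{jq}$.

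There is no real obstacle here: the lemma is a collection of elementary bookkeeping identities about the standard matrix basis $\{J_{ij}\}$, each of which follows from a one-line Kronecker-delta evaluation. The identities are used purely as formal tools to translate partial-derivative expressions such as $\partial/\partial K$ appearing in the trace-derivative manipulations behind the CV asymptotic-optimality proofs (e.g., Theorems \ref{thm2}--\ref{thm5}), which is the only reason a separate lemma is warranted at all.
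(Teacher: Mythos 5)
Your entrywise verification via $(J_{ij})_{pq}=\delta_{ip}\delta_{jq}$ is correct in all three cases, including the dimension bookkeeping in the second identity where $J_{ab}$ on the left is $k\times k$ while $J_{ij}$ on the right is $l\times l$. The paper itself declares the proof ``straightforward and thus omitted,'' and your argument is exactly the routine Kronecker-delta computation that omission presupposes, so there is nothing further to compare.
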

\begin{proof}
	The proof is straightforward and thus omitted. 
\end{proof}

\begin{lem}\label{lem1}
	For the quantities $y_s,\varepsilon_s,X_s,Z_s,\widehat{\beta}_{s^c},G_{s^c},G$ defined in \eqref{rsc} and \eqref{ZsG}, and $y,X,\widehat \beta$ defined in \eqref{lrm} and \eqref{rls0}, 
	the following identities hold:
	\begingroup
	\allowdisplaybreaks
	\begin{subequations}
		\begin{align}
		&\|y_s - X_s\widehat{\beta}_{s^c}\|^2
		=\|Z_s^{-1}(y_s - X_s\widehat{\beta})\|^2 \label{iden1}\\
		&Z_s^{-1}=I_k  + X_s  G_{s^c}^{-1} X_s^T\label{iden5}\\
		&X_s^TZ_s^{-1}
		=X_s^T + X_s^TX_sG_{s^c}^{-1}X_s^T \label{d1}\\
		&X_s^TZ_s^{-2}=X_s^T + 2X_s^TX_sG_{s^c}^{-1}X_s^T
		+X_s^TX_sG_{s^c}^{-1}X_s^TX_sG_{s^c}^{-1}X_s^T\label{d2}\\
		\label{a1}
		&X_s^T(y_s - X_s\widehat{\beta})(y_s - X_s\widehat{\beta})^TX_s
		=X_s^T\varepsilon_s\varepsilon_s^TX_s
		-  X_s^T\varepsilon_s\varepsilon^TX G^{-1} X_s^TX_s \\
		\nonumber
		&
		\hspace{8mm}+\sigma^2X_s^T\varepsilon_s\beta ^TK^{-1}G^{-1} X_s^TX_s
		+X_s^TX_s
		(\beta  - \widehat{\beta})(y_s - X_s\widehat{\beta})^TX_s\\
		&\label{id9}
		X_s^T(y_s - X_s\widehat{\beta})
		=X_s^T\varepsilon_s - X_s^TX_sG^{-1}X^T\varepsilon
		+
		\sigma^2X_s^TX_sG^{-1}K^{-1}
		\end{align}
	\end{subequations}
	\endgroup
	Moreover, let $J_{ij}$ be the matrix defined in Lemma \ref{lem_J}. Then we have the following identities:	
	\begingroup
	\allowdisplaybreaks
	\begin{subequations}
		\begin{align}
		&\frac{\partial G^{-1}_{ij}}{\partial K} = \sigma^2K^{-T} G^{-T}
		J_{ij} G^{-T}K^{-T}\label{id10}\\
		&\frac{\partial M_{ij}}{\partial K} = \sigma^2K^{-T}
		G^{-T} X^T
		J_{ij}X G^{-T}K^{-T}. \label{id11}
		\end{align}
	\end{subequations}
	\endgroup
\end{lem}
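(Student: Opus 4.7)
The lemma bundles together three groups of identities that I would handle separately.

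The first group, \eqref{iden5}, \eqref{d1}, \eqref{d2}, and \eqref{iden1}, all stem from the rank-update relation $G = G_{s^c} + X_s^T X_s$. For \eqref{iden5}, instead of invoking Sherman–Morrison–Woodbury and unpacking it, I would simply verify $(I_k + X_s G_{s^c}^{-1}X_s^T) Z_s = I_k$ by a direct expansion: inserting $G_{s^c}^{-1}(G - X_s^T X_s)G^{-1} = G_{s^c}^{-1} G_{s^c} G^{-1} - G_{s^c}^{-1} X_s^T X_s G^{-1}$ into the cross term makes everything collapse to $I_k$. Identities \eqref{d1} and \eqref{d2} are then obtained by left-multiplying \eqref{iden5} (and its square) by $X_s^T$ and distributing. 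For \eqref{iden1}, the key computation is to get $\widehat{\beta}_{s^c} - \widehat{\beta}$ in closed form: from $G_{s^c}\widehat{\beta}_{s^c} = X_{s^c}^T y_{s^c}$ and $G\widehat{\beta} = X^Ty = X_s^T y_s + X_{s^c}^T y_{s^c}$ one gets $G_{s^c}(\widehat{\beta}_{s^c} - \widehat{\beta}) = -X_s^T(y_s - X_s\widehat{\beta})$, so $y_s - X_s\widehat{\beta}_{s^c} = (I_k + X_sG_{s^c}^{-1}X_s^T)(y_s - X_s\widehat{\beta}) = Z_s^{-1}(y_s - X_s\widehat{\beta})$ by \eqref{iden5}, and taking norms finishes it.

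The second group, \eqref{id9} and \eqref{a1}, is purely algebraic substitution. Writing $\widehat{\beta} = G^{-1}X^Ty = \beta - \sigma^2 G^{-1}K^{-1}\beta + G^{-1}X^T\varepsilon$ (using $G = X^TX + \sigma^2 K^{-1}$ to replace $X^TX = G - \sigma^2 K^{-1}$) and then combining with $y_s = X_s\beta + \varepsilon_s$ gives \eqref{id9} directly. For \eqref{a1}, I would split only the left factor using $y_s - X_s\widehat{\beta} = \varepsilon_s + X_s(\beta-\widehat{\beta})$, which isolates the last term $X_s^TX_s(\beta - \widehat{\beta})(y_s - X_s\widehat\beta)^TX_s$; for the remaining $X_s^T\varepsilon_s(y_s - X_s\widehat{\beta})^TX_s$, I would substitute the transposed form of \eqref{id9} into $(y_s - X_s\widehat\beta)^TX_s$ to produce the other three summands. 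Symmetry of $K$ and $G$ (so that $K^{-T} = K^{-1}$, $G^{-T} = G^{-1}$) is used implicitly here.

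The third group, \eqref{id10} and \eqref{id11}, is matrix calculus. Since $G = X^TX + \sigma^2 K^{-1}$, the only $K$-dependence is through $K^{-1}$, and $\partial K_{mn}^{-1}/\partial K_{ab} = -K^{-1}_{ma}K^{-1}_{bn}$ by Lemma \ref{md}. Chaining this with another application of Lemma \ref{md} to $G^{-1}$ produces a double sum that regroups, entry-wise, into the quadruple product $\sigma^2 K^{-T}G^{-T}J_{ij}G^{-T}K^{-T}$, giving \eqref{id10}. Then $M_{ij} = \sum_{k,l}X_{ik}X_{jl}G^{-1}_{kl}$, so $\partial M_{ij}/\partial K$ is a linear combination of the matrices in \eqref{id10}; pulling out the $K^{-T}G^{-T}(\cdot)G^{-T}K^{-T}$ sandwich and applying identity \eqref{J1} with $B = X$ to $\sum_{k,l}X_{ik}X_{jl}J_{kl} = X^TJ_{ij}X$ (with $J_{ij}$ now an $n\times n$ unit) yields \eqref{id11}.

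No single step is deep; the main risk is bookkeeping — keeping straight which index range each $J_{\cdot\cdot}$ lives in (it changes size between \eqref{id10} and \eqref{id11}), and making sure each Woodbury-style expansion is collected in the form stated. The only place where a small conceptual check is needed is in the identification of $A(I+A)^{-1}$ with $X_sG^{-1}X_s^T$ hidden inside the direct verification of \eqref{iden5}; once that is observed, everything else is essentially forced.
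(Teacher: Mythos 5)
Your proposal is correct and matches the paper's own proof in all essentials: \eqref{iden5} via the rank-one-block update $G_{s^c}^{-1}-G^{-1}=G_{s^c}^{-1}X_s^TX_sG^{-1}$, \eqref{d1}--\eqref{d2} by left-multiplication, \eqref{id9} and \eqref{a1} via the substitution $\beta-\widehat{\beta}=\sigma^2G^{-1}K^{-1}\beta-G^{-1}X^T\varepsilon$, and \eqref{id10}--\eqref{id11} via the chain rule with Lemma \ref{md} followed by \eqref{J1}. The only immaterial deviations are that for \eqref{iden1} the paper verifies $Z_s(y_s-X_s\widehat{\beta}_{s^c})=y_s-X_s\widehat{\beta}$ directly (without needing \eqref{iden5}), whereas you derive the inverse relation from $G_{s^c}(\widehat{\beta}_{s^c}-\widehat{\beta})=-X_s^T(y_s-X_s\widehat{\beta})$, and for \eqref{a1} you reuse the transposed \eqref{id9} where the paper re-substitutes $\beta-\widehat{\beta}$; these are equivalent computations.
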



\begin{proof}
	
	For proving \dref{iden1}, it suffices to show that
	\begin{align*}
	Z_s(y_s - X_s\widehat{\beta}_{s^c}) = y_s - X_s\widehat{\beta}
	\end{align*}	
	which is verified by
	\begingroup
	\allowdisplaybreaks
	\begin{align*}
	Z_s(y_s - X_s\widehat{\beta}_{s^c})
	&= (I_k - X_sG^{-1}X_s^T)(y_s - X_s\widehat{\beta}_{s^c}) \\
	&=y_s - X_s\big[( I_k -  G^{-1}X_s^T X_s )\widehat{\beta}_{s^c} +G^{-1}X_s^Ty_s\big]\\
	&=y_s - X_s \big[G^{-1}( X_{s^c}^T X_{s^c} +\sigma^2K^{1})\widehat{\beta}_{s^c} +G^{-1}X_s^Ty_s\big]\\
	&=y_s - X_s \big[G^{-1} X_{s^c}^T y_{s^c} +G^{-1}X_s^Ty_s\big]\\
	&=y_s - X_s\widehat{\beta}.
	\end{align*}
	\endgroup
	The identity \dref{iden5} follows from
	\begin{align*}
	G_{s^c}^{-1} - G^{-1}
	=G_{s^c}^{-1} ( X^TX + \sigma^2K^{-1} -X_{s^c}^T X_{s^c} -\sigma^2K^{-1})G^{-1}
	=G_{s^c}^{-1} X_{s}^T X_{s} G^{-1}.
	\end{align*}
	
	The identities \dref{d1}--\dref{d2} can be verified by a straightforward calculation.
	
	The decomposition \dref{a1} is derived in the way
	\begingroup
	\allowdisplaybreaks
	\begin{align*}
	&\hspace{6mm}
	X_s^T(y_s - X_s\widehat{\beta})(y_s - X_s\widehat{\beta})^TX_s\\
	\nonumber
	&=X_s^T( \varepsilon_s + X_s
	(\beta  - \widehat{\beta}) )(y_s - X_s\widehat{\beta})^TX_s\\
	\nonumber
	&=X_s^T\varepsilon_s
	(y_s - X_s\widehat{\beta})^TX_s
	+X_s^TX_s
	(\beta  - \widehat{\beta})(y_s - X_s\widehat{\beta})^TX_s\\
	\nonumber
	&=X_s^T\varepsilon_s\varepsilon_s^TX_s
	+X_s^T\varepsilon_s(\beta  - \widehat{\beta})^TX_s^TX_s
	+X_s^TX_s
	(\beta  - \widehat{\beta})(y_s - X_s\widehat{\beta})^TX_s\\
	\nonumber
	&=X_s^T\varepsilon_s\varepsilon_s^TX_s
	-  X_s^T\varepsilon_s\varepsilon^TX G^{-1} X_s^TX_s
	+\sigma^2X_s^T\varepsilon_s\beta ^TK^{-1}G^{-1} X_s^TX_s\\
	\nonumber
	&\hspace{4.5mm}
	+X_s^TX_s
	(\beta  - \widehat{\beta})(y_s - X_s\widehat{\beta})^TX_s.
	\end{align*}
	\endgroup
	by using the identities
	\begin{align*}
	&y_s - X_s\widehat{\beta}  = \varepsilon_s + X_s
	(\beta  - \widehat{\beta}) \\
	&\beta  - \widehat{\beta}
	=\sigma^2G^{-1}K^{-1}\beta
	-G^{-1}X^T \varepsilon.
	\end{align*}
	The equation \dref{id9} can be proved similarly.
	
	Using \dref{md3} gives
	\begingroup
	\allowdisplaybreaks
	\begin{align*}
	\frac{\partial G^{-1}_{ij}}{\partial K_{st}}
	&=\sum_{a,b=1}^p    \frac{\partial G^{-1}_{ij}}{\partial G_{ab}}
	\frac{\partial G_{ab}}{\partial K_{st}}
	=-\sum_{a,b=1}^pG^{-1}_{ia}G^{-1}_{bj}
	\Big(\sigma^2  \frac{\partial K^{-1}_{ab}}{\partial K_{st}}  \Big)\\
	&=\sigma^2 \sum_{a,b=1}^pG^{-1}_{ia}G^{-1}_{bj}
	K^{-1}_{as}K^{-1}_{tb}  \\
	&=\sigma^2 (K^{-T}G^{-T})_{si} (G^{-T}K^{-T})_{jt}
	\end{align*}
	\endgroup
	which implies \dref{id10}.
	Similarly, using \eqref{md3} and \eqref{J1} gives
	\begingroup
	\allowdisplaybreaks
	\begin{align*}
	\frac{\partial M_{ij}}{\partial K}
	&=\sum_{a,b=1}^p  X_{ia}  \frac{\partial G^{-1}_{ab}}{\partial K}X^T_{bj}
	=\sum_{a,b=1}^p
	X_{ia}
	\big( \sigma^2K^{-T} G^{-T}
	J_{ab} G^{-T}K^{-T} \big)
	X^T_{bj}
	\\
	&=\sigma^2K^{-T} G^{-T}
	\Big(\sum_{a,b=1}^pX_{ia}
	J_{ab}
	X^T_{bj}\Big)
	G^{-T}K^{-T} \\
	&=\sigma^2K^{-T} G^{-T}
	X^T
	J_{ij}
	X
	G^{-T}K^{-T}.
	\end{align*}
	\endgroup
	This completes the proof.
\end{proof}

\begin{lem}
	\label{lem4}
	Assumption \ref{assum1} yields the following limits
	\begin{subequations}
		\begin{align}
		\label{cd1}
		& \widehat{\beta}_{\rm LS}=(X^TX)^{-1}X^Ty\xra{}	\beta,~\widehat{\beta}=G^{-1}X^Ty\xra{}	\beta \\
		&O_p(1) = X^T(y-X \widehat{\beta} )  = \sigma^2K^{-1}G^{-1}X^Ty
		\xra{}  \sigma^2K^{-1} \beta\label{id3}\\
		\label{nid1}
		&\|y - X\widehat{\beta} \|^2 /n
		=\widehat{\sigma}^2
		+\sigma^4\widehat{\beta}_{\rm LS}^T
		K^{-1}G^{-1}X^TXG^{-1}K^{-1}\widehat{\beta}_{\rm LS}/n\\
		&\nonumber
		\hspace{23.3mm}=\widehat{\sigma}^2
		+O_p(1/n^2)\xra{}\sigma^2
		\end{align}
	\end{subequations}
	in probability as $n\xra{}\infty$,
	where $\widehat{\sigma}^2$ is defined in \dref{sigma}, while
	\begin{align}
	&G^{-1} X^TX \xra{} I_p,~ n(X^TX)^{-1}\xra{}\Sigma^{-1},
	~nG^{-1}\xra{}\Sigma^{-1} \label{lim1}
	\end{align}
	as $n\xra{}\infty$.
\end{lem}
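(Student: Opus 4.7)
The plan is to work through the four groups of identities in the order they are stated, starting with the deterministic limits in \eqref{lim1}, since the three probabilistic limits all build on them. Under Assumption \ref{assum1}, $X^TX/n \to \Sigma$ and $\sigma^2 K(\eta)^{-1}/n \to 0$ (by boundedness of $K^{-1}$), so $G/n = X^TX/n + \sigma^2K^{-1}/n \to \Sigma$. Continuity of matrix inversion on a neighborhood of the positive definite limit gives $nG^{-1}\to\Sigma^{-1}$ and $n(X^TX)^{-1}\to\Sigma^{-1}$, and then $G^{-1}X^TX = (G/n)^{-1}(X^TX/n)\to\Sigma^{-1}\Sigma = I_p$. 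All this is purely deterministic.

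Next I would handle \eqref{cd1}. Writing $\widehat{\beta}_{\rm LS}-\beta = (X^TX)^{-1}X^T\varepsilon$, the second-moment calculation $E\|(X^TX)^{-1}X^T\varepsilon\|^2 = \sigma^2\,{\rm Tr}((X^TX)^{-1}) = O(1/n)$ combined with Markov's inequality gives $\widehat{\beta}_{\rm LS}\to\beta$ in probability. For $\widehat{\beta}$, I would decompose $\widehat{\beta}-\beta = (G^{-1}X^TX - I_p)\beta + G^{-1}X^T\varepsilon$; the first piece vanishes deterministically by \eqref{lim1}, and the second has covariance $\sigma^2 G^{-1}X^TX G^{-1} = O(1/n)$, again concluding via Markov. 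Then \eqref{id3} is an algebraic identity: $X^T(y-X\widehat{\beta}) = (I_p - X^TX G^{-1})X^Ty = \sigma^2 K^{-1}G^{-1}X^Ty = \sigma^2 K^{-1}\widehat{\beta}$, using $G - X^TX = \sigma^2 K^{-1}$, and the limit $\sigma^2 K^{-1}\beta$ then follows from the convergence of $\widehat{\beta}$.

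The main substantive step is \eqref{nid1}. The key trick is an orthogonal decomposition: write $y = X\widehat{\beta}_{\rm LS} + (I_n-P)y$ with $P = X(X^TX)^{-1}X^T$, and apply $I_n - XG^{-1}X^T$ to each summand. Using $I_p - G^{-1}X^TX = \sigma^2 G^{-1}K^{-1}$, the first summand maps to $\sigma^2 XG^{-1}K^{-1}\widehat{\beta}_{\rm LS}$, which lies in the column space of $X$; the second summand is annihilated by $XG^{-1}X^T$ (because $X^T(I_n-P)=0$) and so is preserved. Since these two images are orthogonal, the Pythagorean identity gives
\[
\|y-X\widehat{\beta}\|^2 = \|(I_n-P)y\|^2 + \sigma^4\,\widehat{\beta}_{\rm LS}^T K^{-1}G^{-1}X^TX G^{-1}K^{-1}\widehat{\beta}_{\rm LS},
\]
and dividing by $n$ yields the stated formula upon recognizing $\|(I_n-P)y\|^2/n = \widehat{\sigma}^2$.

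What remains is to verify the two asymptotic assertions tacked onto \eqref{nid1}. For the residual term, $\widehat{\beta}_{\rm LS} = O_p(1)$, $K^{-1} = O(1)$, and $G^{-1}X^TX G^{-1} = O(1/n)$ by \eqref{lim1}, so after dividing by $n$ the correction is $O_p(1/n^2)$. For $\widehat{\sigma}^2\to\sigma^2$: since $X^T(I_n-P)=0$, we have $n\widehat{\sigma}^2 = \varepsilon^T(I_n-P)\varepsilon$, and writing this as $\varepsilon^T\varepsilon/n - \varepsilon^T P\varepsilon/n$ and applying the weak law of large numbers to the first piece (only second moments of $\varepsilon_i$ are needed) plus Markov to the second (whose mean is $\sigma^2 p/n \to 0$) gives convergence in probability to $\sigma^2$. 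The only modestly delicate point in the entire proof is setting up the orthogonal decomposition cleanly so that the cross term vanishes; once that is in hand, everything else is bookkeeping with \eqref{lim1} and second-moment bounds on $\varepsilon$.
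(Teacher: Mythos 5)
Your proposal is correct and follows essentially the same route as the paper: the heart of the argument is the same decomposition $y-X\widehat{\beta}=(I_n-P)y+\sigma^2 XG^{-1}K^{-1}\widehat{\beta}_{\rm LS}$ with $P=X(X^TX)^{-1}X^T$, the cross term vanishing by $X^T(I_n-P)=0$, which is exactly the identity the paper expands; the remaining limits \dref{cd1}, \dref{id3}, \dref{lim1} are declared "straightforward" in the paper and you simply supply the routine second-moment/Markov details. The only nit is a misplaced factor of $n$ in "$n\widehat{\sigma}^2=\varepsilon^T(I_n-P)\varepsilon$" versus the subsequent "$\varepsilon^T\varepsilon/n-\varepsilon^TP\varepsilon/n$", which is clearly just a typo.
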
	
\begin{proof}
	The limits \dref{cd1}, \dref{id3}, and \dref{lim1} are straightforward and hence their proof is skipped.
	
	The result \dref{nid1} is derived as follows
	\begin{align*}
	\|y - X\widehat{\beta} &\|^2 /n
	=y^T(I_n - X(X^TX)^{-1}X^T)(I_n - X(X^TX)^{-1}X^T)y/n\\
	&\hspace{5mm}+2\sigma^2y^T
	(I_n - X(X^TX)^{-1}X^T)
	X G^{-1} K^{-1} (X^TX)^{-1}X^Ty/n\\
	&\hspace{5mm}+\sigma^4
	y^TX(X^TX)^{-1}
	K^{-1}G^{-1}
	X^TX
	G^{-1}K^{-1}
	(X^TX)^{-1}
	X^Ty/n\\
	&=y^T(I_n - X(X^TX)^{-1}X^T)y/n\\
	&\hspace{5mm}+\sigma^4\widehat{\beta}_{\rm LS}^T
	K^{-1}G^{-1}X^TXG^{-1}K^{-1}\widehat{\beta}_{\rm LS}/n
	\end{align*}
	by using the identity
	\begin{align*}
	y - X\widehat{\beta}
	&=(I_n - XG^{-1}X^T)y\\
	&=(I_n - X(X^TX)^{-1}X^T +\sigma^2 X G^{-1} K^{-1} (X^TX)^{-1}X^T)y.
	\end{align*}
	The proof is completed.
\end{proof}

\begin{lem}
	\label{lem3}
	The first order partial derivatives of
	the cost functions LKOCV \dref{loocv} and PEs \dref{pecv} with respect to $K$
	are given as follows, respectively.
	\begingroup
	\allowdisplaybreaks
	\begin{subequations}
		\begin{align}
		\label{loocvfd}
		&\frac{\partial  \mathscr{C}_{\rm  loocv}(K) }{\partial K}
		= \frac{2\sigma^2}{n}
		K^{-1} G^{-1} X^T
		\Big(\sum_{i=1}^n
		\frac{(y_i-\widehat{y}_i)^2}{(1-M_{ii})^3}  J_{ii}\Big)
		X G^{-1}K^{-1}\\
		\nonumber
		&\hspace{22mm}-\frac{2\sigma^2}{n}
		K^{-1} G^{-1} X^T
		\Big(\sum_{i,l=1}^n
		\frac{(y_i-\widehat{y}_i) y_l}{(1-M_{ii})^2} J_{il}\Big)
		X G^{-1}K^{-1}\\
		&		\label{iden2}
		\frac{ \partial {\rm APE}_s}{\partial K}
		=\frac{2\sigma^2}{k}
		K^{-1} G^{-1}
		X_s^TZ_s^{-2}
		(y_s \!-\! X_s\widehat{\beta})
		(y_s \!-\! X_s\widehat{\beta})^T
		Z_s^{-1} X_sG^{-1}K^{-1}\!
		\\
		\nonumber
		&\hspace{22mm}
		-\frac{2\sigma^2}{k}
		K^{-1} G^{-1}
		X_s^T
		Z_s^{-2} (y_s \!-\! X_s\widehat{\beta})
		y^TX
		G^{-1}K^{-1},
		\end{align}
	\end{subequations} where $J_{ij}$ is the matrix defined in Lemma \ref{lem_J}. 
	\endgroup
\end{lem}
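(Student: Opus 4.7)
The plan is to derive both identities by a direct chain-rule computation, using the matrix-derivative formulas \eqref{id10} and \eqref{id11} already established in Lemma \ref{lem1}, the symmetry of $K$ and $G=X^TX+\sigma^2 K^{-1}$ (so $K^{-T}=K^{-1}$, $G^{-T}=G^{-1}$), and the basis-matrix identities from Lemma \ref{lem_J} to reassemble the resulting sums of rank-one outer products into the compact right-hand sides in the statement.

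For \eqref{loocvfd}, I would write $\mathscr{C}_{\rm loocv}(K) = \frac1n\sum_{i=1}^n f_i^2/g_i^2$ with $f_i = y_i-\widehat{y}_i = y_i - \sum_l M_{il} y_l$ and $g_i = 1-M_{ii}$. The quotient rule gives
\[
\frac{\partial (f_i^2/g_i^2)}{\partial K_{st}} = \frac{2 f_i}{g_i^2}\frac{\partial f_i}{\partial K_{st}} - \frac{2 f_i^2}{g_i^3}\frac{\partial g_i}{\partial K_{st}}.
\]
Substituting \eqref{id11} for the $\partial M_{ij}/\partial K$ terms, using the symmetry of $K$ and $G$ to identify $K^{-T}G^{-T}=K^{-1}G^{-1}$, and recognizing $\sum_l y_l\,e_i e_l^T = e_i y^T$, i.e.\ the appearance of $\sum_l y_l J_{il}$ in the $\partial f_i$ branch and of $J_{ii}$ in the $\partial g_i$ branch (via \eqref{J2}), one factors a common $K^{-1}G^{-1}X^T(\cdot)XG^{-1}K^{-1}$ on both sides. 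Averaging over $i$ and combining the two terms reproduces \eqref{loocvfd}.

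For \eqref{iden2}, I would start from $\mathrm{APE}_s = \frac1k (y_s - X_s\widehat{\beta})^T Z_s^{-2}(y_s - X_s\widehat{\beta})$ (using \eqref{iden1} and symmetry of $Z_s^{-1}$), and observe that $K$ enters only through $G^{-1}$ via $\widehat{\beta}=G^{-1}X^Ty$ and $Z_s = I_k - X_s G^{-1}X_s^T$. Using the standard identity $\partial Z_s^{-1}/\partial K_{st} = Z_s^{-1}X_s(\partial G^{-1}/\partial K_{st})X_s^T Z_s^{-1}$ and plugging in \eqref{id10}, each chain-rule branch produces a scalar of the form $\sigma^2 (K^{-1}G^{-1}w)_s(K^{-1}G^{-1}v)_t$ for an appropriate pair $v,w\in\mathbb{R}^p$: for the $Z_s$-branch, $w=X_s^T Z_s^{-2}(y_s-X_s\widehat{\beta})$ and $v=X_s^T Z_s^{-1}(y_s-X_s\widehat{\beta})$, while the $\widehat{\beta}$-branch replaces $v$ by $X^T y$ (with a sign flip). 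Reading these $(s,t)$-entries back as outer products and using symmetry to replace $(v^T G^{-1}K^{-1})_t$ by $(K^{-1}G^{-1}v)_t$, these collapse into exactly the two matrix expressions in \eqref{iden2}.

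The main obstacle is disciplined index bookkeeping. The right-hand sides of \eqref{id10}--\eqref{id11} are matrices indexed by $(s,t)$ whose entries are bilinear in the two hidden indices $(i,j)$; these $(i,j)$ get paired with the row/column slots of the surrounding matrix structure, and only after using $K^T=K$, $G^T=G$ to swap $(w^T A)_s$ into $(A^T w)_s = (A w)_s$ do the scalar entries collect into precisely the outer products printed in the lemma. Once those pairings are tracked consistently, the reductions via \eqref{J1}--\eqref{J2} are mechanical, and no further probabilistic or asymptotic input is needed.
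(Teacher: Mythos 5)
Your proposal is correct and follows essentially the same route as the paper's proof: the quotient rule on $\sum_i (y_i-\widehat y_i)^2/(1-M_{ii})^2$ combined with \eqref{id11} for the LOOCV part, and the product rule on $\|Z_s^{-1}(y_s-X_s\widehat\beta)\|^2$ combined with \eqref{id10} and $\partial Z_s^{-1}=Z_s^{-1}X_s(\partial G^{-1})X_s^TZ_s^{-1}$ for the ${\rm APE}_s$ part, with the $J_{ij}$ identities of Lemma \ref{lem_J} and the symmetry of $K$, $G$, $Z_s$ used to reassemble the outer products exactly as in the paper. The pairings you identify for $v$ and $w$ in each branch match the printed right-hand sides, so no gap remains beyond the index bookkeeping you already flag.
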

\begin{proof}	
	We first prove \dref{loocvfd}. By using \dref{id11}, we have
	\begingroup
	\begin{align*}
	\frac{\partial  \big(y_i-\widehat{y}_i\big)}{\partial K}
	&=\frac{\partial  {\big(y - M y\big)_i}}{\partial K}
	=-\sum_{l=1}^n
	\frac{\partial  {M_{il}}}{\partial K} y_l\\
	&=-\sigma^2\sum_{l=1}^n K^{-T} G^{-T} X^T
	J_{il}X G^{-T}K^{-T} y_l\\
	\frac{\partial  \big(1-M_{ii}\big)}{\partial K}
	&=-\frac{\partial  M_{ii}}{\partial K}
	=-\sigma^2K^{-T} G^{-T} X^T
	J_{ii}X G^{-T}K^{-T},
	\end{align*} and then we have
	\allowdisplaybreaks
	\begin{align*}
	\frac{\partial  \mathscr{C}_{\rm  loocv}(K) }{\partial K}
	&=
	\frac1n\sum_{i=1}^n
	\frac{2(y_i-\widehat{y}_i)}{(1-M_{ii})^2}
	\frac{\partial (y_i-\widehat{y}_i)}{\partial K}
	-\frac2n\sum_{i=1}^n
	\frac{(y_i-\widehat{y}_i)^2}{(1-M_{ii})^3}
	\frac{\partial  \big(1-M_{ii}\big)}{\partial K}\\
	&=
	\frac1n\sum_{i=1}^n
	\frac{2(y_i-\widehat{y}_i)}{(1-M_{ii})^2}
	\!\Big(-\sigma^2\sum_{l=1}^n K^{-T} G^{-T} X^T
	J_{il}X G^{-T}K^{-T} y_l \Big)\\
	&\hspace{5mm}	-\frac2n
	\sum_{i=1}^n
	\frac{(y_i-\widehat{y}_i)^2}{(1-M_{ii})^3}
	\big(-\sigma^2K^{-T} G^{-T} X^T
	J_{ii}X G^{-T}K^{-T}\big)\\
	&=
	-\frac{2\sigma^2}{n}
	K^{-T} G^{-T} X^T
	\Big(\sum_{i,l=1}^n
	\frac{(y_i-\widehat{y}_i) y_l}{(1-M_{ii})^2} J_{il}\Big)
	X G^{-T}K^{-T}\\
	&\hspace{5mm}
	+\frac{2\sigma^2}{n}
	K^{-T} G^{-T} X^T
	\Big(\sum_{i=1}^n
	\frac{(y_i-\widehat{y}_i)^2}{(1-M_{ii})^3}  J_{ii}\Big)
	X G^{-T}K^{-T}\\
	&=
	-\frac{2\sigma^2}{n}
	K^{-1} G^{-1} X^T
	\Big(\sum_{i,l=1}^n
	\frac{(y_i-\widehat{y}_i) y_l}{(1-M_{ii})^2} J_{il}\Big)
	X G^{-1}K^{-1}\\
	&\hspace{5mm}
	+\frac{2\sigma^2}{n}
	K^{-1} G^{-1} X^T
	\Big(\sum_{i=1}^n
	\frac{(y_i-\widehat{y}_i)^2}{(1-M_{ii})^3}  J_{ii}\Big)
	X G^{-1}K^{-1}
	\end{align*}
	
	\endgroup
	
	\allowdisplaybreaks
	Next, we prove \dref{iden2}. By using \dref{md3}, \dref{id10} and \dref{id1}, we have
	\begingroup
	\allowdisplaybreaks
	\begin{align*}
	\frac{\partial (y_s \!-\! X_s\widehat{\beta})_i}{\partial K}
	&=-\sum_{j,l=1}^p (X_s)_{ij} \frac{\partial G^{-1}_{jl}}{\partial K}(X^Ty)_{l}\\
	&=-\sigma^2\sum_{j,l=1}^p (X_s)_{ij}
	K^{-T} G^{-T}J_{jl} G^{-T}K^{-T}
	(X^Ty)_{l}\\
	\frac{\partial (Z_s^{-1})_{ij}}{\partial K}
	&=\sum_{l,q=1}^k
	\frac{\partial (Z_s^{-1})_{ij}}{\partial (Z_s)_{lq}}
	\frac{\partial (Z_s)_{lq}}{\partial K}\\
	&	=-\sum_{l,q=1}^k
	(Z_s^{-1})_{il}
	(Z_s^{-1})_{qj}
	\Big(- \sum_{a,b=1}^p ( X_s)_{la}
	\frac{\partial G^{-1}_{ab} }{\partial K}
	(X_s^T)_{bq}  \Big)\\
	&= \sum_{l,q=1}^k\sum_{a,b=1}^p
	(Z_s^{-1})_{il}
	(Z_s^{-1})_{qj}
	( X_s)_{la}
	(X_s^T)_{bq}
	\sigma^2K^{-T} G^{-T}
	J_{ab} G^{-T}K^{-T}\\
	&
	=
	\sum_{a,b=1}^p
	(Z_s^{-1} X_s)_{ia}
	(Z_s^{-T} X_s)_{jb}
	\sigma^2K^{-T} G^{-T}
	J_{ab} G^{-T}K^{-T}\\
	& =\sigma^2K^{-T} G^{-T}
	X_s^TZ_s^{-1} J_{ij}Z_s^{-T} X_s G^{-T}K^{-T}.
	\end{align*}
	\endgroup
	Then by using \dref{pecv} and \eqref{J2}, we have
	\begin{align*}
	&
	\frac{\partial \|Z_s^{-1}(y_s - X_s\widehat{\beta})\|^2}{\partial K}
	=2\sum_{i,j=1}^k\big((y_s - X_s\widehat{\beta})^T
	Z_s^{-T}  \big)_{i}
	\frac{\partial (Z_s^{-1})_{ij}}{\partial K}
	(y_s \!-\! X_s\widehat{\beta})_j\\
	&\hspace{4cm}	+2\sum_{i=1}^k\big((y_s - X_s\widehat{\beta})^T
	Z_s^{-T} Z_s^{-1}\big)_{i}
	\frac{\partial (y_s \!-\! X_s\widehat{\beta})_i}{\partial K}\\
	& =2\sum_{i,j=1}^k
	\big((y_s - X_s\widehat{\beta})^T
	Z_s^{-T}  \big)_{i}
	(y_s \!-\! X_s\widehat{\beta})_j
	\sigma^2K^{-T} G^{-T}
	X_s^TZ_s^{-1} J_{ij}Z_s^{-T} X_s G^{-T}K^{-T}\\
	&\hspace{5mm}-2\sum_{i=1}^k \sum_{j,l=1}^p
	\big((y_s - X_s\widehat{\beta})^T
	Z_s^{-T} Z_s^{-1}\big)_{i}
	(X_s)_{ij}
	(X^TY)_{l}
	\sigma^2K^{-T} G^{-T}J_{jl} G^{-T}K^{-T}\\
	& =
	2\sigma^2K^{-T} G^{-T}
	X_s^TZ_s^{-1}
	Z_s^{-1} (y_s - X_s\widehat{\beta})
	(y_s \!-\! X_s\widehat{\beta})^T
	Z_s^{-T} X_sG^{-T}K^{-T}\\
	&\hspace{5mm}-2\sum_{j,l=1}^p
	\big(X_s^T Z_s^{-1} Z_s^{-T} (y_s - X_s\widehat{\beta})\big)_{j}
	(X^TY)_{l}
	\sigma^2K^{-T} G^{-T}J_{jl} G^{-T}K^{-T}\\
	& =
	2\sigma^2K^{-T} G^{-T}
	X_s^T
	Z_s^{-2} (y_s - X_s\widehat{\beta})
	(y_s \!-\! X_s\widehat{\beta})^T
	Z_s^{-T} X_sG^{-T}K^{-T}\\
	&\hspace{5mm}-2
	\sigma^2K^{-T} G^{-T}
	X_s^T Z_s^{-1} Z_s^{-T} (y_s - X_s\widehat{\beta})
	y^TX
	G^{-T}K^{-T}\\
	& = 2\sigma^2K^{-1} G^{-1}
	X_s^TZ_s^{-2}
	(y_s - X_s\widehat{\beta})
	(y_s \!-\! X_s\widehat{\beta})^T
	Z_s^{-1} X_sG^{-1}K^{-1}\\
	&\hspace{5mm}-2\sigma^2
	K^{-1} G^{-1}
	X_s^T
	Z_s^{-2} (y_s \!-\! X_s\widehat{\beta})
	y^TX
	G^{-1}K^{-1}.
	\end{align*}
	\endgroup
	This completes the proof. $\hfill\square$
	
	
	\begin{lem} \label{lem2}
		Let $A$ be a matrix of size $n\times n$ with $A_{ii}=1,1\leq i \leq n$ and $|A_{ij}|\leq 1,1\leq i\neq j \leq n$, where $A_{ij}$ denotes the $(ij)$-th element of $A$.
		Define the matrix $L=\diag(\varepsilon) A\!~ \diag(\varepsilon)$ of dimensions $n\times n$, namely, its $(ij)$-th element is $A_{ij}\varepsilon_i\varepsilon_j$.
		Suppose that 1) and 2) of Assumption \ref{ass3} hold.
		Thus, we have the following results:
		\begin{enumerate}[1)]
			\item if 1)  of Assumption \ref{assum1} holds and $X^TX/n=O(1)$, then $X^T L X/n  =O_p(1)$.

			\item if 1) and 2) of Assumption \ref{assum1} hold and $\frac 1{n^2} \sum_{ k \neq l }A_{kl}^2=o(1)$, then
			$$X^T L X/n \xra{}\sigma^2\Sigma$$
			in probability as $n\xra{}\infty$.
		\end{enumerate}
		%
		
	\end{lem}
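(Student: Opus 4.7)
Because $p$ is fixed, it suffices to analyze each entry of the $p\times p$ matrix $X^{T}LX/n$ separately. The plan is to split the $(i,j)$-entry
\[
\bigl[X^{T}LX/n\bigr]_{ij} = \frac{1}{n}\sum_{k,l=1}^{n} A_{kl}\,\varepsilon_{k}\varepsilon_{l}\,X_{ki}X_{lj}
= T^{\mathrm d}_{n,ij} + T^{\mathrm o}_{n,ij},
\]
where $T^{\mathrm d}_{n,ij} = \frac{1}{n}\sum_{k} X_{ki}X_{kj}\varepsilon_{k}^{2}$ (using $A_{kk}=1$) collects the diagonal contributions, and $T^{\mathrm o}_{n,ij} = \frac{1}{n}\sum_{k\neq l} A_{kl} X_{ki}X_{lj}\varepsilon_{k}\varepsilon_{l}$ collects the off-diagonal contributions. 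I will treat these two pieces separately and combine the bounds at the end.

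For part~1) I will bound the first and second moments. By independence of the $\varepsilon_{k}$'s and $E\varepsilon_{k}=0$, we have $E T^{\mathrm o}_{n,ij}=0$ and $E T^{\mathrm d}_{n,ij}= \sigma^{2}(X^{T}X/n)_{ij}$, which is $O(1)$ by the assumption $X^{T}X/n=O(1)$. For the second moments I enumerate the nonzero cases of $E[\varepsilon_{k}\varepsilon_{l}\varepsilon_{k'}\varepsilon_{l'}]$ (all four equal, or two matched pairs). Using $|X_{ki}|\leq c_{4}$, $|A_{kl}|\leq 1$ and $E|\varepsilon_{k}|^{4}\leq c_{3}$, each of the three pair-cases contributes at most $O(n^{2})$ inside the $1/n^{2}$ factor, while the all-equal case contributes $O(n)/n^{2}=O(1/n)$. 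This gives $E\bigl(T^{\mathrm d}_{n,ij}+T^{\mathrm o}_{n,ij}\bigr)^{2}=O(1)$, and Markov's inequality then yields $X^{T}LX/n=O_{p}(1)$.

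For part~2), the diagonal piece is a sum of independent random variables with mean $\sigma^{2}(X^{T}X/n)_{ij}\to \sigma^{2}\Sigma_{ij}$ and variance bounded by $\frac{1}{n^{2}}\sum_{k} X_{ki}^{2}X_{kj}^{2}\,\mathrm{Var}(\varepsilon_{k}^{2}) \leq c_{4}^{4}c_{3}/n = O(1/n)$, so $T^{\mathrm d}_{n,ij}\to \sigma^{2}\Sigma_{ij}$ in probability by Chebyshev. For the off-diagonal piece, only the pairings $(k',l')=(k,l)$ and $(k',l')=(l,k)$ contribute to the second moment, yielding
\[
E\bigl[T^{\mathrm o}_{n,ij}\bigr]^{2} \leq \frac{2\sigma^{4}c_{4}^{4}}{n^{2}}\sum_{k\neq l}A_{kl}^{2} = o(1)
\]
by the hypothesis $\frac{1}{n^{2}}\sum_{k\neq l}A_{kl}^{2}=o(1)$, hence $T^{\mathrm o}_{n,ij}\xrightarrow{p}0$. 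Adding the two limits gives $[X^{T}LX/n]_{ij}\to \sigma^{2}\Sigma_{ij}$ in probability, and since $p$ is fixed, entrywise convergence is equivalent to matrix convergence.

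The main technical step is the careful enumeration of the nonzero fourth-order moments $E[\varepsilon_{k}\varepsilon_{l}\varepsilon_{k'}\varepsilon_{l'}]$; everything else is routine once the boundedness of $X_{ki}$ and $A_{kl}$ is invoked. No truly hard step arises because the quadratic form has dimensions $p\times p$ with $p$ fixed, so one only needs second-moment control of scalar random sums rather than any concentration for growing-dimensional matrices.
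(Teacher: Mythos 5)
Your proof is correct and follows essentially the same route as the paper's: an entrywise decomposition of $X^TLX/n$ into the diagonal ($k=l$) and off-diagonal ($k\neq l$) sums, followed by first- and second-moment bounds using $|A_{kl}|\leq 1$, the uniform bound on $\|x_i\|$ and the finite fourth moment, with Markov/Chebyshev closing both parts. Your explicit accounting of the $(k,l)$ versus $(l,k)$ pairings (the factor $2$ in the off-diagonal second moment) is slightly more careful than the paper's variance computation, but it does not change the order of any bound or the conclusion.
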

	\begin{proof}
		We first consider the proof for part 1). First, it follows that
		\begin{align*}
		E[X^T L X/n  ]
		=\sigma^2 X^T X/n=O(1).
		\end{align*}
		Since the $(ij)$-th element of $X^T L X/n $ is 
		\begin{align*}
		\nonumber
		\big(X^T L X/n \big)_{ij}
		&=\frac1n\sum_{k,l=1}^n X^T_{ik} A_{kl}\varepsilon_k\varepsilon_lX_{lj}\\
		\nonumber
		&=\frac1n\sum_{k=1}^n X^T_{ik} X_{kj}\varepsilon_k^2
		+ \frac1n\sum_{k\neq l} X^T_{ik} A_{kl}X_{lj}\varepsilon_k\varepsilon_l
		\end{align*}
		and  $\Var(\varepsilon_k^2)\leq c_3$ by Assumption \ref{ass3} and $\Var(\varepsilon_k\varepsilon_l)=\sigma^4$ for $k\neq l$,
		the variance of the $(ij)$-th element of $X^T L X/n $ is bounded from above as follows:
		\begingroup
		\allowdisplaybreaks
		\begin{align}
		\label{var}&
		\Var\big( \big(X^T L X/n \big)_{ij}\big)\\
		\nonumber
		&=\frac{1}{n^{2}}\sum_{k=1}^{n}
		\big(X^T_{ik} X_{kj}\big)^2 \Var \big( \varepsilon_k^2 \big)
		+\frac{1}{n^{2}}\sum_{k\neq l}
		\big( X^T_{ik} A_{kl}X_{lj}  \big)^2
		\Var \big( \varepsilon_k\varepsilon_l \big)\\
		&\leq \frac{c_3}{n^{2}}\sum_{k=1}^{n}
		\big(X^T_{ik} X_{kj}\big)^2
		+\frac{\sigma^4}{n^{2}}\sum_{k\neq l}
		\big( X^T_{ik} A_{kl}X_{lj}  \big)^2.\nonumber
		\end{align}
		\endgroup
		Now applying Assumption \ref{ass3} and the assumption that $|A_{ij}|\leq 1$ derives
		\begin{align*}
		&\frac{1}{n^{2}}
		\sum_{k=1}^{n}
		\big(X^T_{ik}  X_{kj}\big)^2
		=\frac{1}{n^{2}}\sum_{k=1}^{n}O(1)
		=O\Big(\frac1n\Big),\\
		&\frac{1}{n^{2}}\sum_{k\neq l}
		\big( X^T_{ik} A_{kl}X_{lj}  \big)^2
		\leq  \frac{c_4^4}{n^{2}}\sum_{k\neq l} A_{kl}^2<\infty,
		\end{align*}
		which implies  $\Var\big( (X^T L X/n )_{ij}\big), 1\leq i,j\leq p$ are uniformly bounded from above by a constant. Then we  recall the following result, e.g., \cite[Theorem 14.4-1, p. 476]{Bishop2007} that for any random sequence $S_n$ with $E(S_n)=\mu_n$ and $\Var(S_n)<\infty$, then $S_n-\mu_n=O_p(1)$. Applying this result to $X^T L X/n$ yields 
		$X^T L X/n -  \sigma^2 X^T X/n  =O_p(1)$.
		Finally, note  that $
		\sigma^2 X^T X/n
		=O(1)$, and thus we have $X^T L X/n  =O_p(1)$.
		
		Next, we consider the proof for part 2). Under Assumption \ref{ass3} and the assumption that $\frac 1{n^2} \sum_{ k \neq l }A_{kl}^2=o(1)$, it is easy to see that the second term of \dref{var} tends to zero as $n\xra{}\infty$ and thus 
		$\Var\big( \big(X^T L X/n \big)_{ij}\big)=o(1)$.
		And further, it follows from 2) of Assumption \ref{assum1} that
		\begin{align*}
		E[(X^T L X/n  - \sigma^2 \Sigma)_{ij}]^2
		=\Var\big( \big(X^T L X/n \big)_{ij}\big)
		+(\sigma^2 (X^T X/n - \Sigma )_{ij})^2
		\xra{}0
		\end{align*}
		for all $1\leq i,j\leq p$, which implies that
		$X^T L X/n  \xra{} \sigma^2 \Sigma$ in mean square and also in probability as $n\xra{}\infty$.
		%
	\end{proof}
	

	\subsection{Convergence Results for Extremum Estimators}
	
	\begin{lem}\cite[Theorem 4.1.1, page 106]{Amemiya1985}
		\label{ct}
		Assume that
		\begin{enumerate}[1)]
			\item $g(\eta)$ is a deterministic function that is continuous in a compact subset $\mathscr{X}$ of $\mathbb{R}^m$ and is  minimized at the set
			\begin{align*}
			\eta^*\!\eq\!\argmin_{\eta \in \mathscr{X}} g(\eta) \!=\! \big\{\eta|\eta \in
			\mathscr{X}, g(\eta)\!=\!\min_{\eta'\in \mathscr{X} }g(\eta')\big\}
			\end{align*}
			
			\item A sequence of functions $\{g_n(\eta)\}$ converges to $g(\eta)$ in probability and uniformly in $\mathscr{X}$ as $n\xra{}\infty$.
			
		\end{enumerate}
		Then $\widehat{\eta}_n = \arg\min_{\eta\in \mathscr{X}} g_n(\eta)$
		converges to $ \eta^*$ in probability  as $n\xra{}\infty$, namely,
		\begin{align}
		\widehat{\eta}_n\xra{}\eta^*\label{cfc}
		\end{align}
		where the convergence \dref{cfc} is interpreted as
		\begin{align*}
		\inf_{a\in\widehat{\eta}_n,b\in\eta^*}
		\|a-b\|\xra{}0,~~\mbox{as}~n\xra{}\infty.
		\end{align*}
	\end{lem}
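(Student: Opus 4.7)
The plan is to establish the standard argmin consistency result via a uniform-gap argument. Fix an arbitrary $\epsilon > 0$ and consider the open $\epsilon$-neighborhood $N_\epsilon = \{\eta \in \mathscr{X} : \inf_{b \in \eta^*} \|\eta - b\| < \epsilon\}$ of the optimum set. The goal is to show that $P(\widehat{\eta}_n \subset N_\epsilon) \to 1$ as $n \to \infty$, which is exactly the set-convergence formulation of \eqref{cfc}.

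First I would show that $g$ exhibits a strict gap outside $N_\epsilon$. The complement $\mathscr{X} \setminus N_\epsilon$ is a closed subset of the compact set $\mathscr{X}$, hence compact. Since $g$ is continuous on this compact set, it attains its infimum there. Denote the common minimum value by $g^* \eq \min_{\eta \in \mathscr{X}} g(\eta)$. By the definition of $\eta^*$ as the set of all global minimizers, every point in $\mathscr{X} \setminus N_\epsilon$ lies outside $\eta^*$ and therefore has $g$-value strictly greater than $g^*$. Combined with compactness, this yields $\delta \eq \min_{\eta \in \mathscr{X}\setminus N_\epsilon} g(\eta) - g^* > 0$.

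Next I would invoke the uniform convergence in probability from hypothesis 2). For the fixed $\delta > 0$ above, consider the event $E_n = \{\sup_{\eta \in \mathscr{X}} |g_n(\eta) - g(\eta)| < \delta/3\}$; by assumption, $P(E_n) \to 1$. On $E_n$, for any $\eta_0 \in \eta^*$ (noting $\eta_0 \in \mathscr{X}$ by hypothesis 1), $g_n(\eta_0) < g^* + \delta/3$, whereas for every $\eta \in \mathscr{X} \setminus N_\epsilon$, $g_n(\eta) > g(\eta) - \delta/3 \geq g^* + 2\delta/3$. Hence on $E_n$ no minimizer of $g_n$ can lie outside $N_\epsilon$, i.e., $\widehat{\eta}_n \subset N_\epsilon$. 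Consequently $P(\widehat{\eta}_n \subset N_\epsilon) \geq P(E_n) \to 1$, which gives $\inf_{a \in \widehat{\eta}_n, b \in \eta^*} \|a - b\| \to 0$ in probability.

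The argument is essentially routine once the strict-gap step is properly set up; the main subtlety — not really an obstacle but worth care — is that $\arg\min$ is interpreted as a \emph{set} on both sides, so measurability of $\widehat{\eta}_n$ and the event $\{\widehat{\eta}_n \subset N_\epsilon\}$ must be handled cleanly (standard for extremum estimators over compact domains with continuous sample paths of $g_n$, which is implicit in the uniform convergence assumption). No other delicate step arises: continuity plus compactness delivers the gap $\delta$, and uniform convergence in probability immediately translates the gap into control of $\widehat{\eta}_n$, yielding exactly the set-convergence statement claimed in the lemma.
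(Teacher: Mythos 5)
Your proof is correct: the compactness--continuity gap argument followed by the uniform-convergence-in-probability transfer is exactly the standard proof of this extremum-estimator consistency result, and it correctly handles the set-valued $\arg\min$ (indeed proving the stronger one-sided Hausdorff convergence). The paper itself gives no proof---it imports the lemma by citation to Amemiya's Theorem 4.1.1---so your argument is essentially a faithful reconstruction of the cited textbook proof and requires no changes beyond perhaps noting the trivial case $\mathscr{X}\setminus N_\epsilon=\emptyset$.
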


	\begin{lem}
		\label{fdt}
		\cite[Theorem 4.1.2, page 110]{Amemiya1985}
		Assume that
		\begin{enumerate}[1)]
			\item $f(\psi)$ is a continuous deterministic function from a compact set $\mathscr{X}$  of  $\!~\mathbb{R}^m$ to $\mathbb{R}^m$  and the set of roots of $f(\psi)$ in $\mathscr{X}$ is denoted by
			\begin{align*}
			\psi^*=\{\psi\in \mathscr{X} |f(\psi)=0\}
			\end{align*}
			
			\item  A function sequence $\{f_n(\psi)\}$ defined from $\mathscr{X}$ to $\mathbb{R}^m$ converges to  $f(\psi)$ in probability and uniformly in $\mathscr{X}$ as $n\xra{}\infty$.
			
		\end{enumerate}	
		Let $\widehat{\psi}_n$ be the set of roots of $f_n(\psi)$.
		Thus, we have
		\begin{align}
		\widehat{\psi}_n\xra{}\psi^*\label{fdc}
		\end{align}
		in probability as $n\xra{}\infty$, 
		where the convergence \dref{fdc} is interpreted as
		\begin{align*}
		\inf_{a\in\widehat{\psi}_n, b\in\psi^*}
		\|a-b\|\xra{}0,~~\mbox{as}~n\xra{}\infty.
		\end{align*}
	\end{lem}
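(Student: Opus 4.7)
The plan is to exploit compactness of $\mathscr{X}$ together with continuity of $f$ to convert uniform convergence in probability of $f_n$ into containment of the root set $\widehat{\psi}_n$ inside an arbitrarily small neighborhood of $\psi^*$; the stated convergence $\inf_{a\in\widehat{\psi}_n,b\in\psi^*}\|a-b\|\xra{}0$ in probability then follows at once. The argument is the standard Amemiya-style identification argument adapted to roots instead of minimizers.

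First, fix an arbitrary $\epsilon>0$ and define the open neighborhood $N_\epsilon(\psi^*)=\{\psi\in\mathscr{X}:\inf_{b\in\psi^*}\|\psi-b\|<\epsilon\}$. Its complement $\mathscr{X}_\epsilon:=\mathscr{X}\setminus N_\epsilon(\psi^*)$ is closed in the compact set $\mathscr{X}$, hence compact, and $f$ does not vanish on $\mathscr{X}_\epsilon$ by the very definition of $\psi^*$. Since $\psi\mapsto\|f(\psi)\|$ is continuous and strictly positive on the compact set $\mathscr{X}_\epsilon$, it attains a strictly positive minimum
$$\delta\;:=\;\inf_{\psi\in\mathscr{X}_\epsilon}\|f(\psi)\|\;>\;0.$$

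Second, by hypothesis 2) the functional sequence $f_n$ converges to $f$ in probability uniformly on $\mathscr{X}$, so for every $\alpha>0$ there exists $N(\epsilon,\alpha)$ such that for all $n\ge N(\epsilon,\alpha)$, the event $E_n:=\{\sup_{\psi\in\mathscr{X}}\|f_n(\psi)-f(\psi)\|<\delta/2\}$ satisfies $\Pr(E_n)\ge 1-\alpha$. On $E_n$, for every $\psi\in\mathscr{X}_\epsilon$ the triangle inequality gives
$$\|f_n(\psi)\|\;\ge\;\|f(\psi)\|-\|f_n(\psi)-f(\psi)\|\;>\;\delta-\delta/2\;=\;\delta/2\;>\;0,$$
so $f_n$ has no zero in $\mathscr{X}_\epsilon$. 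Consequently, on $E_n$, every element of $\widehat{\psi}_n$ lies in $N_\epsilon(\psi^*)$, which implies $\inf_{a\in\widehat{\psi}_n,b\in\psi^*}\|a-b\|\le\epsilon$. Since $\epsilon>0$ and $\alpha>0$ were arbitrary, this is precisely the claimed convergence in probability.

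The substantive step is the existence of the strictly positive separation $\delta>0$, and this hinges on compactness of $\mathscr{X}_\epsilon$: without compactness one could have $\|f(\psi_k)\|\to 0$ along some sequence $\psi_k\in\mathscr{X}_\epsilon$ without producing a limiting root, and the argument would collapse. A minor bookkeeping subtlety is the interpretation when $\widehat{\psi}_n$ happens to be empty on some sample path; in the applications of the lemma (Theorems \ref{thm2}--\ref{thm5}) $\widehat{\psi}_n$ is nonempty as the zero set of a well-posed first-order optimality equation on $\Omega$, and one may restrict attention to the event $\{\widehat{\psi}_n\neq\emptyset\}$ or adopt the standard convention that the infimum over the empty set equals zero, either of which is consistent with the statement as written.
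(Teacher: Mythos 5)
The paper does not prove this lemma at all: it is quoted verbatim as Theorem 4.1.2 of Amemiya's \emph{Advanced Econometrics}, so there is no internal proof to compare against. Your argument is correct and is the standard one for this result (the root-set analogue of the consistency proof for extremum estimators that underlies Lemma \ref{ct}): compactness of $\mathscr{X}\setminus N_\epsilon(\psi^*)$ plus continuity of $f$ yield a positive separation $\delta$, and uniform convergence in probability then forces all roots of $f_n$ into $N_\epsilon(\psi^*)$ with probability tending to one, which is even stronger than the stated one-sided set distance $\inf_{a\in\widehat{\psi}_n,b\in\psi^*}\|a-b\|\xra{}0$. Your handling of the two edge cases is also the right call: if $\mathscr{X}_\epsilon=\emptyset$ the claim is trivial, and the possible emptiness of $\widehat{\psi}_n$ is an issue with the statement itself rather than with your proof (the lemma, like Amemiya's original, implicitly assumes a root of $f_n$ exists, which holds in the applications to Theorems \ref{thm2}--\ref{thm5}); the only quibble is that the ``standard convention'' for an infimum over the empty set is $+\infty$, not zero, so the nonemptiness assumption is genuinely needed rather than dischargeable by convention.
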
		
	
\end{appendix}

\end{document}